\newcommand{\Min}{\min\limits}
\newcommand{\Sup}{\sup\limits}
\newcommand{\Inf}{\inf\limits}
\newcommand{\Sum}{\sum\limits}
\newcommand{\SumN}{\sum\limits_{i=1}^N}
\newcommand{\PP}{\mathds{P}}
\newcommand{\QQ}{\mathds{Q}}
\newcommand{\EE}{\mathds{E}}
\newcommand{\RR}{\mathbb{R}}
\newcommand{\WW}{\mathbb{W}}
\newcommand{\YY}{\mathbb{Y}}
\newcommand{\HH}{\mathbb{H}}
\newcommand{\XX}{\mathbb{X}}
\newcommand{\KK}{\bm{\mathcal{K}}}
\newcommand{\CC}{\mathcal{C}}
\newcommand{\ZZ}{\mathcal{Z}}
\newcommand{\x}{\bm x}
\newcommand{\w}{\bm w}
\newcommand{\q}{\bm q}
\newcommand{\p}{\bm p}
\newcommand{\z}{\bm z}
\newcommand{\xxi}{\bm \xi}
\newcommand{\Beta}{\bm \beta}
\newcommand{\BB}{\mathds{B}}
\newcommand{\Prob}{\mathbb{P}}
\newcommand{\Obj}{\lambda \eps + \frac{1}{N} \Sum_{i=1}^N s_i}
\newcommand{\risk}{\mathcal{R}}
\newcommand{\error}{\mathcal{E}}
\newcommand{\loss}{\ell_{\w}}
\newcommand{\Pem}{\widehat{\PP}_N}
\newcommand{\Pdel}{\frac{1}{N} \sum_{i=1}^{N} \delta_{(\hat{\x}_i,\hat{y}_i)}}
\newcommand{\diff}{\mathrm{d}}
\newcommand{\eps}{\rho}
\newcommand{\lip}{\mathop{\rm lip}(L)}
\newcommand{\opt}{^\star}
\newcommand{\Wball}{\mathds{B}_{\eps}(\Pem)}
\newcommand{\inner}[2]{\langle #1, #2 \rangle }
\newcommand{\optimize}[1]{\left\{ \begin{array}{cl@{\qquad}l} #1 \end{array} \right. }
\theoremstyle{definition}
\newtheorem{Thrm}{Theorem}[section]
\newtheorem{Def}[Thrm]{Definition}
\newtheorem{Asmp}[Thrm]{Assumption}
\newtheorem{Crl}[Thrm]{Corollary}
\newtheorem{Rmk}[Thrm]{Remark}
\newtheorem{Exp}[Thrm]{Example}
\newtheorem{Thrm2}{Theorem}[section]
\newtheorem{Asmp2}[Thrm2]{Assumption}
\newtheorem{Lmm2}[Thrm2]{Lemma}
\newtheorem{Rmk2}[Thrm2]{Remark}
\newcommand{\change}[1]{{\color{black}{#1}}} 
\title {Regularization via Mass Transportation}
\author[$\dagger$]{Soroosh Shafieezadeh-Abadeh}
\author[$\dagger$]{Daniel Kuhn}
\author[$\ddagger$]{Peyman Mohajerin Esfahani}
\affil[$\dagger$]{Risk Analytics and Optimization Chair, EPFL Lausanne \authorcr
	\texttt{soroosh.shafiee,daniel.kuhn@epfl.ch}}
\affil[$\ddagger$]{Delft Center for Systems and Control, TU Delft \authorcr
	\texttt{p.mohajerinesfahani@tudelft.nl}}
\begin{document}
	\maketitle
	\begin{abstract}
		The goal of regression and classification methods in supervised learning is to minimize the empirical risk, that is, the expectation of some loss function quantifying the prediction error under the empirical distribution. When facing scarce training data, overfitting is typically mitigated by adding regularization terms to the objective that penalize hypothesis complexity. In this paper we introduce new regularization techniques using ideas from distributionally robust optimization, and we give new probabilistic interpretations to existing techniques. Specifically, we propose to minimize the worst-case expected loss, where the worst case is taken over the ball of all (continuous or discrete) distributions that have a bounded transportation distance from the (discrete) empirical distribution. By choosing the radius of this ball judiciously, we can guarantee that the worst-case expected loss provides an upper confidence bound on the loss on test data, thus offering new generalization bounds. We prove that the resulting regularized learning problems are tractable and can be tractably kernelized for many popular loss functions. \change{The proposed approach to regluarization is also extended to neural networks.} We validate our theoretical out-of-sample guarantees through simulated and empirical experiments. 
		\\ \\
		\textit{\textbf{Keywords:}} Distributionally robust optimization, optimal transport, Wasserstein distance, robust optimization, regularization
	\end{abstract}
	
	\section{Introduction} 
	\label{sec:int}
	
	The fields of machine learning and optimization are closely intertwined. On the one hand, optimization algorithms are routinely used for the solution of classical machine learning problems. Conversely, recent advances in optimization under uncertainty have inspired many new machine learning models.
	
	From a conceptual point of view, many statistical learning tasks give naturally rise to stochastic optimization problems. Indeed, they aim to find an estimator from within a prescribed hypothesis space that minimizes the expected value of some loss function. The loss function quantifies the estimator's ability to correctly predict random outputs ({\em i.e.}, dependent variables or labels) from random inputs ({\em i.e.}, independent variables or features). Unfortunately, such stochastic optimization problems cannot be solved exactly because the input-output distribution, which is needed to evaluate the expected loss in the objective function, is not accessible and only indirectly observable through finitely many training samples. Approximating the expected loss with the empirical loss, that is, the average loss across all training samples, yields fragile estimators that are sensitive to perturbations in the data and suffer from overfitting.
	
	Regularization is the standard remedy to combat overfitting. Regularized learning models minimize the sum of the empirical loss and a penalty for hypothesis complexity, which is typically chosen as a norm of the hypothesis. There is ample empirical evidence that regularization reduces a model's generalization error. Statistical learning theory reasons that regularization implicitly restricts the hypothesis space, thereby controlling the gap between the training error and the testing error, see, {\em e.g.}, \citet{bartlett2002}. However, alternative explanations for the practical success of regularization are possible. In particular, ideas from modern robust optimization (\citet{ben2009robust}) recently led to a fresh perspective on regularization.
	
	Robust regression and classification models seek estimators that are immunized against adversarial perturbations in the training data. They have received considerable attention since the seminal treatise on robust least-squares regression by \citet{el1997robust}, who seem to be the first authors to discover an intimate connection between robustification and regularization. Specifically, they show that minimizing the worst-case residual error with respect to all perturbations in a Frobenius norm-uncertainty set is equivalent to a Tikhonov regularization procedure. \citet{xu2010robust} disclose a similar equivalence between robust least-squares regression with a feature-wise independent uncertainty set and the celebrated Lasso (least absolute shrinkage and selection operator) algorithm. Leveraging this new robustness interpretation, they extend Lasso to a wider class of regularization schemes tailored to regression problems with disturbances that are coupled across features. In the context of classification, \citet{xu2009robustness} provide a linkage between robustification over non-box-typed uncertainty sets and the standard regularization scheme of support vector machines. A comprehensive characterization of the conditions under which robustification and regularization are equivalent has recently been compiled by \citet{bertsimas2017}.
	
	New learning models have also been inspired by recent advances in the emerging field of {\em distributionally} robust optimization, which bridges the gap between the conservatism of robust optimization and the specificity of stochastic programming. Distributionally robust optimization seeks to minimize a worst-case expected loss, where the worst case is taken with respect to all distributions in an ambiguity set, that is, a family of distributions consistent with the given prior information on the uncertainty, see, \change{{\em e.g.}, \citet{calafiore2006distributionally}, \citet{delage2010distributionally}, \citet{GS10:dro}, \citet{wiesemann2014distributionally} and the references therein.} Ambiguity sets are often characterized through generalized moment conditions. \change{For instance, \citet{lanckriet2002minimax} propose a distributionally robust minimax probability machine for binary classification, where both classes are encoded by the first and second moments of their features, and the goal is to find a linear classifier that minimizes the worst-case misclassification error in view of all possible input distributions consistent with the given moment information. By construction, this approach forces the worst-case accuracies of both classes to be equal. \citet{huang2004minimum} propose a generalization of the minimax probability machine that allows for uneven worst-case classification accuracies. \citet{lanckriet2002robust} extend the minimax probability machine to account for estimation errors in the mean vectors and covariance matrices. \citet{strohmann2003formulation} and \citet{bhattacharyya2004second} develop minimax probability machines for regression and feature selection, respectively.} \citet{shivaswamy2006second} study linear classification problems trained with incomplete and noisy features, where each training sample is modeled by an ambiguous distribution with known first and second-order moments. The authors propose to address such classification problems with a distributionally robust soft margin support vector machine and then prove that it is equivalent to a classical robust support vector machine with a feature-wise uncertainty set. \citet{farnia2016minimax} investigate distributionally robust learning models with moment ambiguity sets that restrict the marginal of the features to the empirical marginal. The authors highlight similarities and differences to classical regression models.
	
	Ambiguity sets containing all distributions that share certain low-order moments are computationally attractive but fail to converge to a singleton when the number $N$ of training samples tends to infinity. Thus, they preclude any asymptotic consistency results. A possible remedy is to design spherical ambiguity sets with respect to some probability distance functions and to drive their radii to zero as $N$ grows. Examples include the $\phi$-divergence ambiguity sets proposed by \citet{ben2013robust} or the Wasserstein ambiguity sets studied by \citet{MohKun-14} and \citet{zhao2015data}. \citet{blanchet2016quantifying} and \citet{gao2016distributionally} consider generalized Wasserstein ambiguity sets defined over Polish spaces.
	
	In this paper we investigate distributionally robust learning models with Wasserstein ambiguity sets. The Wasserstein distance between two distributions is defined as the minimum cost of transporting one distribution to the other, where the cost of moving a unit point mass is determined by the ground metric on the space of uncertainty realizations. In computer science the Wasserstein distance is therefore sometimes aptly termed the `earth mover's distance' (\citet{rubner2000earth}). Following \citet{MohKun-14}, we define Wasserstein ambiguity sets as balls with respect to the Wasserstein distance that are centered at the empirical distribution on the training samples. These ambiguity sets contain all (continuous or discrete) distributions that can be converted to the (discrete) empirical distribution at bounded transportation cost. 
	
	Wasserstein distances are widely used in machine learning to compare histograms.
	\change{For example, \citet{rubner2000earth} use the Wasserstein distance as a metric for image retrieval with a focus on applications to color and texture. \citet{cuturi2013sinkhorn} and \citet{benamou2015iterative} propose fast iterative algorithms to compute a regularized Wasserstein distance between two high-dimensional discrete distributions for image classification tasks. Moreover, \citet{cuturi2014fast} develop first-order algorithms to compute the Wasserstein barycenter between several empirical probability distributions, which has applications in clustering.  \citet{arjovsky2017wasserstein} utilize the Wasserstein distance to measure the distance between the data distribution and the model distribution in generative adversarial networks. Furthermore, \citet{frogner2015learning} propose a learning algorithm based on the Wasserstein distance to predict multi-label outputs.}
	
	Distributionally robust optimization models with Wasserstein ambiguity sets were introduced to the realm of supervised learning by \citet{shafieezadeh2015distributionally}, who show that distributionally robust logistic regression problems admit a tractable reformulation and encapsulate the classical as well as the popular regularized logistic regression problems as special cases. When the Wasserstein ball is restricted to distributions on a compact set, the problem becomes intractable but can still be addressed with an efficient decomposition algorithm due to \citet{luo2017decomposition}. Support vector machine models with distributionally robust chance constraints over Wasserstein ambiguity sets are studied by~\citet{leedistributionally}. These models are equivalent to hard semi-infinite programs and can be solved approximately with a cutting plane algorithm. 
	
	Wasserstein ambiguity sets are popular for their attractive statistical properties. For example, \citet{fournier2015rate} prove that the empirical distribution on $N$ training samples converges in Wasserstein distance to the true distribution at rate $\mathcal O(N^{-1/(n+1)})$, where $n$ denotes the feature dimension. This implies that properly scaled Wasserstein balls constitute natural confidence regions for the data-generating distribution. The worst-case expected prediction loss over all distributions in a Wasserstein ball thus provides an upper confidence bound on the expected loss under the unknown true distribution; see \citet{MohKun-14}. \citet{blanchet2016robust} show, however, that radii of the order $\mathcal O(N^{-1/2})$ are asymptotically optimal even though the corresponding Wasserstein balls are too small to contain the true distribution with constant confidence. For Wasserstein distances of type two (where the transportation cost equals the squared ground metric) \citet{blanchet2017data}
	develop a systematic methodology for selecting the ground metric. Generalization bounds for the {\em worst-case} prediction loss with respect to a Wasserstein ball centered at the {\em true} distribution are derived by \citet{lee2017minimax} in order to address emerging challenges in domain adaptation problems, where the distributions of the training and test samples can differ.
	
	This paper extends the results by \citet{shafieezadeh2015distributionally} on distributionally robust logistic regression along several dimensions. Our main contributions can be summarized as follows:
	
	\begin{itemize}
		\item \textbf{Tractability:} We propose data-driven distributionally robust regression and classification models that hedge against all input-output distributions in a Wasserstein ball. We demonstrate that the emerging semi-infinite optimization problems admit equivalent reformulations as tractable convex programs for many commonly used loss functions and for spaces of linear hypotheses. We also show that lifted variants of these new learning models are kernelizable and thus offer an efficient procedure for optimizing over all nonlinear hypotheses in a reproducible kernel Hilbert space. \change{Finally, we study distributionally robust learning models over families of feed-forward neural networks. We show that these models can be approximated by regularized empirical loss minimization problems with a convex regularization term and can be addressed with a stochastic proximal gradient descent algorithm.}
		

		\item \textbf{Probabilistic Interpretation of Existing Regularization Techniques:} We show that the classical regularized learning models emerge as special cases of our framework when the cost of moving probability mass along the output space tends to infinity. In this case, the regularization function and its regularization weight are determined by the transportation cost on the input space and the radius of the Wasserstein ball underlying the distributionally robust optimization model, respectively.
		
		\item \textbf{Generalization Bounds:} We demonstrate that the proposed distributionally robust learning models enjoy new generalization bounds that can be obtained under minimal assumptions. In particular, they do not rely on any notions of hypothesis complexity and may therefore even extend to hypothesis spaces with infinite VC-dimensions. A na\"ive generalization bound is obtained by leveraging modern measure concentration results, which imply that Wasserstein balls constitute confidence sets for the unknown data-generating distribution. Unfortunately, this generalization bound suffers from a curse of dimensionality and converges slowly for high input dimensions. By imposing bounds on the hypothesis space, however, we can derive an improved generalization bound, which essentially follows a dimension-independent square root law reminiscent of the central limit theorem.
		
		\item \textbf{Relation to Robust Optimization:} In classical robust regression and classification the training samples are viewed as uncertain variables that range over a joint uncertainty set, and the best hypothesis is found by minimizing the worst-case loss over this set. We prove that the classical robust and new distributionally robust learning models are equivalent if the data satisfies a dispersion condition (for regression) or a separability condition (for classification). While there is no efficient algorithm for solving the robust learning models in the absence of this condition, the distributionally robust models are efficiently solvable irrespective of the underlying training datasets.
		
		\item \textbf{Confidence Intervals for Error and Risk:} Using distributionally robust optimization techniques based on the Wasserstein ball, we develop two tractable linear programs whose optimal values provide a confidence interval for the absolute prediction error of any fixed regressor or the misclassification risk of any fixed classifier.
		
		\item \textbf{Worst-Case Distributions:} We formulate tractable convex programs that enable us to efficiently compute a worst-case distribution in the Wasserstein ball for any fixed hypothesis. This worst-case distribution can be useful for stress tests or contamination experiments.
	\end{itemize}
	
	The rest of the paper develops as follows. Section~\ref{sec:Stat} introduces our new distributionally robust learning models. Section~\ref{sec:Tractable} provides finite convex reformulations for learning problems over linear and nonlinear hypothesis spaces and describes efficient procedures for constructing worst-case distributions. Moreover, it compares the new distributionally robust method against existing robust optimization and regularization approaches. Section~\ref{sec:Probabilistic} develops new generalization bounds, while Section~\ref{subsec:Uncertainty} addresses error and risk estimation. Numerical experiments are reported in Section~\ref{sec:Simulation}. All proofs are relegated to the appendix.

	
	\subsection{Notation} 	
	Throughout this paper, we adopt the conventions of extended arithmetics, whereby $\infty \cdot 0 = 0 \cdot \infty = 0 / 0 = 0$ and $\infty - \infty = - \infty + \infty = 1 / 0 = \infty$.
	The inner product of two vectors $\x,\bm x' \in \RR^n$ is denoted by $\inner{\x}{\bm x'}$, and for any norm $\| \cdot \|$ on $\RR^n$, we use $\| \cdot \|_*$ to denote its dual norm defined through $\| \x\|_* = \sup ~ \{ \inner{\x}{\x'}: \| \x' \| \leq 1 \}$. The conjugate of an extended real-valued function $f(\x)$ on $\RR^n$ is defined as $f^*(\bm x) = \sup_{\x'} \inner{\x}{\x'} - f(\x')$. The indicator function of a set $\XX \subseteq \RR^n$ is defined as $ \delta_\XX (\x) = 0 $ if $\x \in \XX; = \infty$ otherwise. Its conjugate $S_\XX (\bm x) = \sup \{\inner{\bm x'}{\x}: \x' \in \XX\} $ is termed the support function of $ \XX $. The characteristic function of $\XX$ is defined through $ \mathds{1}_\XX(\x) = 1 $ if $\x \in \XX$; $= 0$ otherwise. For a proper cone $\CC \subseteq \RR^n$ the relation $\x \succeq_{\CC} \x'$ indicates that $\x - \x' \in \CC$. The cone dual to $\CC$ is defined as $\CC^* = \{ \bm x': \inner{\bm x'}{\x} \geq 0\, \forall \x \in \CC \}$. The Lipschitz modulus of a function $L:\XX\rightarrow\RR$ is denoted by $\lip=\sup_{\x, \x'\in\XX} \{|L(\x)-L(\x')|/\|\x-\x'\|:\x\neq\x'\}$. If $\PP$ is a distribution on a set $\Xi$, then $\Prob^N$ denotes the $N$-fold product of $\PP$ on the Cartesian product $\Xi^N$. For $N \in \mathbb{N}$ we define $[N] = \{1, \dots , N \}$. \change{A list of commonly used symbols is provided in the following table.
		\begin{table}[h]
			\centering
			\bgroup
			\def\arraystretch{1.1}
			\begin{tabular}{|c|l||c|l|}
				\hline
				$\XX$ & input space & $\YY$ & output space \\ \hline
				$\ell$ & loss function & $L$ & univariate loss function \\ \hline
				$\HH$ & hypothesis space & $\KK$ & kernel matrix \\ \hline
				$f^*$ & conjugate of $f$ & $\mathrm{lip}(f)$ & Lipschitz modulus of $f$ \\ \hline			
				$\mathcal C^*$ & dual cone of $\mathcal C$ & $\| \cdot \|_*$ & dual norm of $\| \cdot \|$ \\ \hline
				$S_{\XX}$ & support function of $\XX$ & $ \delta_{\XX} $ & indicator function of $\XX$ \\ \hline
				$\mathds{1}_{\XX}$ & characteristic function of $\XX$ & $[N]$ & $\{1, \dots , N \}$ \\ \hline			
			\end{tabular}
			\egroup
		\end{table}
	}
	
	\section{Problem Statement} 
	\label{sec:Stat}
	We first introduce the basic terminology and then describe our new perspective on regularization.

	\subsection{Classical Statistical Learning}
	\label{sec:stat-learn}
	The goal of supervised learning is to infer an unknown target function $f:\XX\rightarrow \YY$ from limited data. The target function maps any input $\x\in \XX$ ({\em e.g.}, information on the frequency of certain keywords in an email) to some output $y\in \YY$ ({\em e.g.}, a label $+1$ ($-1$) if the email is likely (unlikely) to be a spam message). If the true target function was accessible, it could be used as a means to reliably predict outputs from inputs ({\em e.g.}, it could be used to recognize spam messages in an automated fashion). In a supervised learning framework, however, one has only access to finitely many input-output examples $(\widehat{\x}_i,\widehat{y}_i)$ for $i=1,\ldots, N$ ({\em e.g.}, a database of emails which have been classified by a human as legitimate or as spam messages). We will henceforth refer to these examples as the {\em training data} or the {\em in-sample data}. It is assumed that the training samples are mutually independent and follow an unknown distribution $\PP$ on $\XX\times\YY$.
	
	The supervised learning problems are commonly subdivided into \emph{regression problems}, where the output $y$ is continuous and $\YY= \RR$, and \emph{classification problems}, where $y$ is categorical and $\YY=\{+1,-1\}$. As the space of all functions from $\XX$ to $\YY$ is typically vast, it may be very difficult to learn the target function from finitely many training samples. Thus, it is convenient to restrict the search space to a structured family of candidate functions $\mathbb H\subseteq \RR^\XX$ such as the space of all linear functions\change{, some reproducible kernel Hilbert space or the family of all feed-forward neural networks with a fixed number of layers}. We henceforth refer to each candidate function $h\in\mathbb H$ as a {\em hypothesis} and to $\mathbb H$ as the {\em hypothesis space}. 
	
	A {\em learning algorithm} is a method for finding a hypothesis $h\in\mathbb H$ that faithfully replicates the unknown target function~$f$. Specifically, in regression we seek to approximate $f$ with a hypothesis $h$, and in classification we seek to approximate $f$ with a {\em thresholded} hypothesis $\text{sign}(h)$. Many learning algorithms achieve this goal by minimizing the  in-sample error, that is, the empirical average of a loss function $\ell:\RR \times \YY \rightarrow \RR_+$ that estimates the mismatch between the output predicted by $h(\x)$ and the actual output $y$ for a particular input-output pair $(\x,y)$. Any such algorithm solves a minimization problem of the form
	\begin{align} \label{in-sample}
	\Inf _ {h\in\mathbb H}  \bigg\{ \frac{1}{N} \sum_{i=1}^N \ell(h(\widehat{\x}_i),\widehat{y}_i) = 
	\EE^{\Pem} \left[ \ell(h(\x),y) \right] \bigg\},
	\end{align}
	where $\Pem=\Pdel$ denotes the {\em empirical distribution}, that is, the uniform distribution on the training data. For different choices of the the loss function $\ell$, the generic supervised learning problem~\eqref{in-sample} reduces to different popular regression and classification problems from the literature. 

	\subsubsection*{Examples of Regression Models}
	For ease of exposition, we focus here on learning models with $\XX\subseteq\RR^n$ and $\YY\subseteq\RR$, where $\mathbb H$ is set to the space of all linear hypotheses $h(\x)= \inner{\w}{\x}$ with $\w\in\RR^n$. Thus, there is a one-to-one correspondence between hypotheses and weight vectors $\w$. Moreover, we focus on loss functions of the form $\ell(h(\x),y)=L(h(\x)-y)=L(\inner{\w}{\x})-y)$ that are generated by a univariate loss function $L$.
	
	\begin{enumerate}
		
		\item A rich class of {\em robust regression} problems is obtained from \eqref{in-sample} if $\ell$ is generated by the {\em Huber loss function} with robustness parameter $\delta> 0$, which is defined as $L(z)= \frac{1}{2}z^2$ if $|z|\leq \delta$; $= \delta( |z|-\frac{1}{2}\delta)$ otherwise. Note that the Huber loss function is both convex and smooth and reduces to the squared loss $L(z)=\frac{1}{2}z^2$ for $\delta\uparrow\infty$, which is routinely used in ordinary least squares regression. Problem~\eqref{in-sample} with squared loss seeks a hypothesis $\w$ under which $\inner{\w}{\x}$ approximates the mean of $y$ conditional on~$\x$. The Huber loss function for finite $\delta$ favors similar hypotheses but is less sensitive to outliers.
		
		\item The {\em support vector regression} problem \citep{smola2004tutorial} emerges as a special case of \eqref{in-sample} if $\ell$ is generated by the {\em $\epsilon$-insensitive loss function} $L(z) = \max  \{0, | z| - \epsilon \}$ with $\epsilon\geq 0$. In this setting, a training sample $(\widehat{\x}_i,\widehat y_i)$ is penalized in~\eqref{in-sample} only if the output $\inner{\w}{\widehat{\x}_i}$ predicted by hypothesis $\w$ differs from the true output $\widehat y_i$ by more than $\epsilon$. Support vector regression thus seeks hypotheses $\w$ under which all training samples reside within a slab of width $2\epsilon$ centered around the hyperplane $\{(\x,y): \inner{\w}{\x}=y\}$.
		
		\item The {\em quantile regression} problem \citep{koenker2005quantile} is obtained from \eqref{in-sample} if  $\ell$ is generated by the {\em pinball loss function} $L(z) = \max \{ -\tau z, (1-\tau)z \}$ defined for $\tau\in[0,1]$. Quantile regression seeks hypotheses that approximate the $\tau\times100\%$-quantile of the output conditional on the input. More precisely, it seeks hypotheses $\w$ for which $\tau\times 100\%$ of all training samples lie in the halfspace $\{(\x,y): \inner{\w}{\x} \geq y \}$.

	\end{enumerate}
	
	\subsubsection*{Examples of Classification Models}
	We focus here on linear learning models with $\XX\subseteq \RR^n$ and $\YY=\{+1,-1\}$, where $\mathbb H$ is again identified with the space of all linear hypotheses $h(\x)=\inner{\w}{\x}$ with $\w\in\RR^n$. Moreover, we focus on loss functions of the form $\ell(\x,y)=L(yh(\x))=L(y\inner{\w}{\x})$ generated by a univariate loss function $L$.
	
	\begin{enumerate}
		\item The {\em support vector machine} problem \citep{cortes1995support} is obtained from~\eqref{in-sample} if $\ell$ is generated by the {\em hinge loss function} $L(z) = \max \{ 0, 1 - z\}$, which is large if $z$ is small. Thus, a training sample $(\widehat{\x}_i,\widehat y_i)$ is penalized in~\eqref{in-sample} if the output $\text{sign}(\inner{\w}{\widehat{\x}_i})$ predicted by hypothesis $\w$ and the true output $\widehat y_i$ have opposite signs. More precisely, support vector machines seek hypotheses $\w$ under which the inputs of all training samples with output $+1$ reside in the halfspace $\{\x:\inner{\w}{\x}\geq 1\}$, while the inputs of training samples with output $-1$ are confined to $\{\x:\inner{\w}{\x}\leq -1\}$. 
		\item An alternative support vector machine problem is obtained from~\eqref{in-sample} if $\ell$ is generated by the  {\em smooth hinge loss function}, which is defined as $L(z) = \frac{1}{2} - z$ if $z \leq 0$; $=\frac{1}{2}(1-z)^2$ if $0<z<1$; $=0$ otherwise. The smooth hinge loss inherits many properties of the ordinary hinge loss but has a continuous derivative. Thus, it may be amenable to faster optimization algorithms.
		\item The {\em logistic regression} problem \citep{hosmer2013applied} emerges as a special case of \eqref{in-sample} if $\ell$ is generated by the {\em logloss function} $L(z)= \log ( 1 + e^{-z})$, which is large if $z$ is small---similar to the hinge loss function. In this case the objective function of \eqref{in-sample} can be viewed as the log-likelihood function corresponding to the logistic model $\PP(y=1|\x)=[1+\exp(-\inner{\w}{\x})]^{-1}$ for the conditional probability of $y=1$ given $\x$. Thus, logistic regression allows us to learn the conditional distribution of $y$ given $\x$.
	\end{enumerate}	
	
	\begin{Rmk} [Convex approximation]
		Note that the hinge loss and the logloss functions represent convex approximations for the (discontinuous) {\em one-zero loss} defined through $L(z)=1$ if $z\leq 0$; $=0$ otherwise.
	\end{Rmk}

	In practice there may be many hypotheses that are compatible with the given training data and thus achieve a small empirical loss in~\eqref{in-sample}. Any such hypothesis would accurately predict outputs from inputs {\em within the training dataset} \citep{defourny2010machine}. However, due to overfitting, these hypotheses might constitute poor predictors {\em beyond the training dataset}, that is, on inputs that have not yet been recorded in the database. Mathematically, even if the in-sample error $\EE^{\Pem} [ \ell(\inner{\w}{\x},y) ]$ of a given hypothesis $\w$ is small, the out-of-sample error $\EE^{\PP} [ \ell(\inner{\w}{\x},y)]$ with respect to the unknown true input-output distribution $\PP$ may be~large.
	
	Regularization is the standard remedy to combat overfitting. Instead of na\"ively minimizing the in-sample error as is done in~\eqref{in-sample}, it may thus be advisable to solve the regularized learning problem
	\begin{align} \label{in-sample-regularized}
	\Inf _ {\w} ~ \EE^{\Pem} \left[ \ell(\inner{\w}{\x},y) \right] +c\, \Omega(\w),
	\end{align}
	which minimizes the sum of the emiprial average loss and a penalty for hpothesis complexity, which consists of a regularization function $\Omega(\w)$ and its associated regularization weight $c$. Tikhonov regularization~\citep{tikhonov1977solutions}, for example, corresponds to the choice $\Omega(\w)=\|\boldsymbol \Gamma\w\|_2^2$ for some Tikhonov matrix $\boldsymbol \Gamma\in \RR^{n\times n}$. Setting $\boldsymbol \Gamma$ to the identity matrix gives rise to standard $L_2$-regularization. Similarly, Lasso (least absolute shrinkage and selection operator) regularization or $L_1$-regularization~\citep{Tibshirani94regressionshrinkage} is obtained by setting $\Omega(\w)=\|\w\|_1$. Lasso regularization has gained popularity because it favors parsimonious interpretable hypotheses.

	Most popular regualization methods admit probabilistic interpretations. However, these interpretations typically rely on prior distributional assumptions that remain to some extent arbitrary ({\em e.g.}, $L_2$- and $L_1$-regularization can be justified if $\w$ is governed by a Gaussian or Laplacian prior distribution, respectively~\citep{Tibshirani94regressionshrinkage}). Thus, in spite of their many desirable theoretical properties, there is a consensus that ``{\em most of the (regularization) methods used successfully in practice are heuristic methods}'' \citep{abu2012learning}.

	\subsection{A New Perspective on Regularization}
	
	When {\em linear} hypotheses are used, problem~\eqref{in-sample} minimizes the in-sample error $\EE^{\Pem} [ \ell(\inner{\w}{\x},y) ]$. However, a hypothesis $\w$ enjoying a low in-sample error may still suffer from a high out-of-sample error $\EE^{\PP} [ \ell(\inner{\w}{\x},y)]$ due to overfitting. This is unfortunate as we seek hypotheses that offer high prediction accuracy on {\em future} data, meaning that the out-of-sample error is the actual quantity of interest. An ideal learning model would therefore minimize the out-of-sample error. This is impossible, however, for the following reasons:
	\begin{itemize}
		\item The true input-output distribution $\PP$ is unknown and only indirectly observable through the $N$ training samples. Thus, we lack essential information to compute the out-of-sample error.
		\item Even if the distribution $\PP$ was known, computing the out-of-sample error would typically be hard due to the intractability of high-dimensional integration; see, {\em e.g.}, \citep[Corollary~1]{hanasusanto2016comment}. 
	\end{itemize}
	The regularized loss $\EE^{\Pem} [ \ell(\inner{\w}{\x},y) ]+c \,\Omega(\w)$ used in~\eqref{in-sample-regularized}, which consists of the in-sample error and an overfitting penalty, can be viewed as an in-sample estimate of the out-of-sample error. Yet, problem~\eqref{in-sample-regularized} remains difficult to justify rigorously. Therefore, we advocate here a more principled approach to regularization. Specifically, we propose to take into account the expected loss of hypothesis $\w$ under {\em every} distribution $\QQ$ that is close to the empirical distribution $\Pem$, that is, every $\QQ$ that could have generated the training data with high confidence. To this end, we first introduce a distance measure for distributions. For ease of notation, we henceforth denote the input-output pair $(\x,y)$ by $\xxi$, and we set $\Xi= \XX\times\YY$. 	
	\begin{Def} [Wasserstein metric] \label{Def:Wass}
		The Wasserstein distance between two distributions $\QQ$ and $\QQ'$ supported on $\Xi$ is defined as
		\begin{align*}
		W (\QQ,\QQ') := \Inf _ {\Pi}  \left\{
		\int_{\Xi^2} d(\xxi, \xxi') \, \Pi(\diff \xxi, \diff \xxi') ~: \begin{array}{l} \Pi \text{ is a joint distribution of $\xxi$ and $\xxi'$} \\ \text{with marginals $\mathds{Q}$ and $\mathds{Q}'$, respectively} \end{array} 
		\right\},
		\end{align*}
		where $d$ is a metric on $\Xi$. 
	\end{Def}
	
	By definition, $W (\QQ,\QQ')$ represents the solution of an infinite-dimensional transportation problem, that is, it corresponds to the minimal cost for moving the distribution $\QQ$ to $\QQ'$, where the cost for moving a unit probability mass from $\xxi$ to $\xxi'$ is given by the transportation distance $d(\xxi,\xxi')$. Due to this interpretation, the metric $d$ is often referred to as the transportation cost \citep{villani2008optimal} or ground metric \citep{cuturi2014ground}, while the Wasserstein metric is sometimes termed the mass transportation distance or earth mover's distance \citep{rubner2000earth}.

	Consider now the Wasserstein ball  of radius $\eps\geq 0$ around the empirical distribution $\Pem$,
	\begin{align} \label{wass_ball}
	\BB _ \eps (\Pem)=\big\{ \QQ\; :\; \QQ(\Xi)=1,\; W (\QQ,\Pem) \leq \eps \big\},
	\end{align}
	which contains all input-output distributions $\QQ$ supported on $\Xi$ whose Wasserstein distance from $\Pem$ does not exceed~$\eps$. This means that $\QQ$ can be transported to $\Pem$ (or vice versa) at a cost of at most~$\eps$. The hope is that a large enough Wasserstein ball will contain distributions that are representative of the unknown true input-output distribution $\PP$, such that the worst-case expectation $\sup _ {\QQ \in \Wball} \EE ^ \QQ[ \ell(\inner{\w}{\x},y)]$ can serve as an upper confidence bound on the out-of-sample error $\EE ^ \PP[ \ell(\inner{\w}{\x},y)]$. This motivates us to introduce a new regularized learning model, which minimizes precisely this worst-case expectation.
	\begin{align} \label{distrob:wass} 
	\Inf _ {\w} \Sup _ {\QQ \in \Wball} \EE ^ \QQ \left[ \ell(\inner{\w}{\x},y) \right]
	\end{align}
	Problem~\eqref{distrob:wass} represents a distributionally robust convex program of the type considered in~\citep{MohKun-14}. Note that if $\ell(\inner{\w}{\x},y)$ is convex in $\w$ for every fixed $(\x,y)$, {\em i.e.}, if $\ell$ is convex in its first argument, then the objective function of \eqref{distrob:wass} is convex because convexity is preserved under integration and maximization. Note also that if $\eps$ is set to zero, then~\eqref{distrob:wass} collapses to the unregularized in-sample error minimization problem~\eqref{in-sample}. 
	
	\change{
	\begin{Rmk}[Support information]
		The uncertainty set~$\Xi$ captures prior information on the range of the inputs and outputs. In image processing, for example, pixel intensities range over a known interval. Similarly, in diagnostic medicine, physiological parameters such as blood glucose or cholesterol concentrations are restricted to be non-negative. Sometimes it is also useful to construct~$\Xi$ as a confidence set that covers the support of~$\PP$ with a prescribed probability. Such confidence sets are often constructed as ellipsoids, as intersections of different norm balls \citep{ben2009robust,delage2010distributionally} or as sublevel sets of kernel expansions \citep{scholkopf2001estimating}.
	\end{Rmk}
	}
	
	In the remainder we establish that the distributionally robust learning problem~\eqref{distrob:wass} has several desirable properties. (i)~Problem~\eqref{distrob:wass} is computationally tractable under standard assumptions about the loss function~$\ell$, the input-output space~$\Xi$ and the transportation metric~$d$. For specific choices of $d$ it even reduces to a regularized learning problem of the form~\eqref{in-sample-regularized}. (ii)~For all univariate loss functions reviewed in Section~\ref{sec:stat-learn}, a tight conservative approximation of~\eqref{distrob:wass} is kernelizable, that is, it can be solved implicitly over high-dimensional spaces of nonlinear hypotheses at the same computational cost required for linear hypothesis spaces. (iii)~Leveraging modern measure concentration results, the optimal value of~\eqref{distrob:wass} can be shown to provide an upper confidence bound on the out-of-sample error. This obviates the need to mobilize the full machinery of VC theory and, in particular, to estimate the VC dimension of the hypothesis space in order to establish generalization bounds. (iv)~If the number of training samples tends to infinity while the Wasserstein ball shrinks at an appropriate rate, then problem~\eqref{distrob:wass} asymptotically recovers the {\em ex post} optimal hypothesis that attains the minimal out-of-sample error.	
	
	\section{Tractable Reformulations}
	\label{sec:Tractable}
	
	In this section we demonstrate that the distributionally robust learning problem~\eqref{distrob:wass} over linear hypotheses is amenable to efficient computational solution procedures. \change{We also discuss generalizations to nonlinear hypothesis classes such as reproducing kernel Hilbert spaces and families of feed-forward neural networks.}
	
	\subsection{Distributionally Robust Linear Regression}
	\label{subsec:drregression}
	Throughout this section we focus on linear regression problems, where $\ell(\inner{\w}{\x},y)=L(\inner{\w}{\x}-y)$ for some convex univariate loss function $L$. We also assume that $\XX$ and $\YY$ are both convex and closed and that the transportation metric $d$ is induced by a norm $\|\cdot\|$ on the input-output space $\RR^{n+1}$. In this setting, the distributionally robust regression problem \eqref{distrob:wass} admits an equivalent reformulation as a finite convex optimization problem if either (i) the univariate loss function $L$ is piecewise affine or (ii) $\Xi=\RR^{n+1}$ and $L$ is Lipschitz continuous (but not necessarily piecewise affine). 	
	
	\begin{Thrm} [Distributionally robust linear regression]
		\label{Thrm:Regression}		
		The following statements hold.
		\begin{enumerate}
			\item[(i)] If $L(z)= \max_{j\leq J} \{ a_j z + b_j \}$, then \eqref{distrob:wass} is equivalent to
			\begin{align} \label{tractable:regression:linear}
			\optimize{
				\Inf_{ \substack{\w, \lambda,s_i \\ \p_{ij}, u_{ij}} } & \displaystyle \Obj & \\
				\mathrm{s.t.} & S_\Xi(a_j \w - \p_{ij}, -a_j - u_{ij}) + b_j + \inner{\p_{ij}} {\widehat{\x}_i} + u_{ij} \widehat{y}_i \leq s_i & i \in [N], j \in [J] \\
				& \| (\p_{ij}, u_{ij}) \|_* \leq \lambda & i \in [N], j \in [J],}	
			\end{align}
			where $S_{\Xi}$ denotes the support function of $\Xi$.
			\item[(ii)] If $\Xi = \RR^{n+1}$ and $L(z)$ is Lipschitz continuous, then \eqref{distrob:wass} is equivalent to
			\begin{align} \label{tractable:regression:convex}
			\Inf_{\w}  ~  \frac{1}{N} \sum_{i=1}^N L (\inner{\w}{\widehat{\x}_i} - \widehat{y}_i) + \eps \lip \, \| (\w,-1) \|_*.
			\end{align}
		\end{enumerate}	
	\end{Thrm}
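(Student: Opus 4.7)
My plan is to begin, for both parts, from the standard strong duality result for worst-case expectations over Wasserstein balls (as developed in \citet{MohKun-14} and the surrounding literature). Under the hypotheses of the theorem, the function $(\x,y)\mapsto L(\inner{\w}{\x}-y)$ is convex and upper semicontinuous, the transportation cost is a norm, and $\Xi$ is convex closed. Thus for any fixed $\w$ I can rewrite
\begin{align*}
\Sup_{\QQ\in\Wball}\EE^{\QQ}\!\left[L(\inner{\w}{\x}-y)\right]
=\Inf_{\lambda\ge 0}\left\{\lambda\eps+\frac{1}{N}\Sum_{i=1}^N \Sup_{\xxi\in\Xi}\left\{L(\inner{\w}{\x}-y)-\lambda\|\xxi-\widehat{\xxi}_i\|\right\}\right\},
\end{align*}
with $\xxi=(\x,y)$, $\widehat{\xxi}_i=(\widehat{\x}_i,\widehat{y}_i)$. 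Introducing the epigraphic variables $s_i$ bounding each inner supremum then gives a generic template that I will specialize below.

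For part~(i) I would substitute $L(z)=\max_{j\le J}\{a_j z+b_j\}$ and commute the maximum with the supremum, turning each constraint $\Sup_{\xxi\in\Xi}\{\dots\}\le s_i$ into $J$ separate constraints indexed by $(i,j)$. For each such inner supremum, set $\vv_j=(a_j\w,-a_j)$ so that $a_j(\inner{\w}{\x}-y)=\inner{\vv_j}{\xxi}$, shift by $\widehat\xxi_i$, and apply the identity $\lambda\|\zz\|=\Sup_{\|\bm\nu\|_*\le\lambda}\inner{\bm\nu}{\zz}$. Swapping $\sup_{\xxi}$ and $\inf_{\bm\nu}$ via Sion's minimax theorem (the coupling is bilinear in $\xxi$ and $\bm\nu$, $\Xi$ is convex, and the dual-norm ball is convex and compact) yields
\begin{align*}
\Sup_{\xxi\in\Xi}\{\inner{\vv_j}{\xxi}+b_j-\lambda\|\xxi-\widehat\xxi_i\|\}=\Inf_{\|\bm\nu\|_*\le\lambda}\{S_\Xi(\vv_j-\bm\nu)+\inner{\bm\nu}{\widehat\xxi_i}+b_j\}.
\end{align*}
Writing $\bm\nu=(\p_{ij},u_{ij})$ produces exactly the two constraint families in \eqref{tractable:regression:linear}, and minimizing over all feasible $(\lambda,s_i,\p_{ij},u_{ij})$ recovers the claimed reformulation.

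For part~(ii), with $\Xi=\RR^{n+1}$, I would argue directly (rather than via piecewise-affine approximation). Using the Lipschitz bound
\begin{align*}
L(\inner{\w}{\x}-y)\le L(\inner{\w}{\widehat\x_i}-\widehat y_i)+\lip(L)\,\|(\w,-1)\|_*\cdot\|\xxi-\widehat\xxi_i\|,
\end{align*}
the inner supremum is bounded above by $L(\inner{\w}{\widehat\x_i}-\widehat y_i)$ whenever $\lambda\ge\lip(L)\|(\w,-1)\|_*$, and the bound is tight at $\xxi=\widehat\xxi_i$. Conversely, when $\lambda<\lip(L)\|(\w,-1)\|_*$ I would exhibit a ray $\xxi_t=\widehat\xxi_i+t\,\z$ on which $L$ grows at a rate strictly larger than $\lambda\|\z\|$ (using a point where the Lipschitz constant of $L$ is attained in direction $(\w,-1)$), driving the supremum to $+\infty$. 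Substituting back shows that the dual minimum over $\lambda$ is attained at $\lambda=\lip(L)\|(\w,-1)\|_*$, which yields the regularized form \eqref{tractable:regression:convex}.

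The main technical obstacle will be the rigorous justification of the two duality steps: strong duality for the Wasserstein worst-case expectation and the Sion-type minimax swap used in part~(i). Both hinge on the convexity and closedness of $\Xi$ together with appropriate compactness of the dual-norm sublevel sets; I would either invoke the semi-infinite duality theorem from \citet{MohKun-14} directly or verify its hypotheses from scratch, and in particular argue that the absence of a compact support $\Xi$ in part~(ii) does not cause loss of duality because the sup is either finite and attained at $\widehat\xxi_i$ or identically $+\infty$, rendering the $\lambda$-minimization an explicit one-dimensional problem.
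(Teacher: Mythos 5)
Your proposal is correct and, for part (i), follows essentially the same route as the paper: the paper's Lemma~\ref{Lmm:main} is exactly your strong-duality step (justified via \citep[Proposition~3.4]{Shapiro-conic-duality}), and its Lemma~\ref{Lmm:linear} is precisely your dual-norm representation of $\lambda\|\cdot\|$ followed by a minimax swap over the compact ball $\{\|\bm\nu\|_*\le\lambda\}$, yielding the support-function constraints with $\bm\nu=(\p_{ij},u_{ij})$. For part (ii) you genuinely diverge: the paper evaluates $\sup_{\bm\zeta}L(\inner{\Beta}{\bm\zeta})-\gamma\|\bm\zeta-\widehat{\bm\zeta}\|$ by writing $L\circ\text{linear}$ as its biconjugate, $\sup_{\theta\in\Theta}\inner{\theta\Beta}{\bm\zeta}-L^*(\theta)$, swapping suprema and infima, and reading off the dichotomy from $\sup_{\theta\in\Theta}|\theta|=\lip$ (Lemma~\ref{Lmm:convex}); you instead prove the upper bound directly from the Lipschitz inequality combined with $|\inner{(\w,-1)}{\xxi-\widehat\xxi_i}|\le\|(\w,-1)\|_*\|\xxi-\widehat\xxi_i\|$, and establish the converse by sending the sup to $+\infty$ along a dual-norm-achieving ray. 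Your argument is more elementary (no conjugate calculus), while the paper's buys a reusable lemma that also drives the classification results. One small imprecision to fix when writing this up: for a convex Lipschitz $L$ the modulus $\lip$ need not be \emph{attained} at any finite point (it may only be the asymptotic slope at $\pm\infty$), so your divergent ray must be justified via the limit $\lim_{t\to\infty}L(c+ts)/t=s\,L'(\pm\infty)$ together with the fact that $\lip=\max\{L'(+\infty),-L'(-\infty)\}$ by convexity, choosing the sign of the ray direction accordingly; with that adjustment the construction is sound.
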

	
	In the following, we exemplify Theorem~\ref{Thrm:Regression} for the Huber, $\epsilon$-insensitive and pinball loss functions under the assumption that the uncertainty set $\Xi$ admits the conic representation
	\begin{equation}
	\label{uncertainty-set}
	\Xi = \{ (\x, y) \in \RR^{n+1} : \bm C_1 \x + \bm c_2 y \preceq_{\CC} \bm d \}
	\end{equation}
	for some matrix $\bm C_1$, vectors $\bm c_2$ and $\bm d$ and proper convex cone $\CC$ of appropriate dimensions. We also assume that $\Xi$ admits a Slater point $(\x_{\rm S}, y_{\rm S}) \in \RR^{n+1}$ with $ \bm C_1 \x_{\rm S} + \bm c_2 y_{\rm S} \prec_{\CC} \bm d$.

	\begin{Crl} [Robust regression]
		\label{Crl:hr}
		If $L$ represents the Huber loss function with threshold $\delta \geq 0$ and $\Xi=\RR^{n+1}$, then \eqref{distrob:wass} is equivalent to
		\begin{align} \label{Huber:rob}
		\Inf_{\w, z_i} ~ \frac{1}{N} \SumN \frac{1}{2} z_i^2 + \delta | \inner{\w}{\widehat{\x}_i} - \widehat{y}_i - z_i| +  \eps \, \delta \, \| (\w,-1) \|_*.
		\end{align}		
	\end{Crl}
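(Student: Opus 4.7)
The proof should proceed by applying Theorem~\ref{Thrm:Regression}(ii) and then rewriting the Huber loss as an infimal convolution. The Huber loss with threshold $\delta$ is Lipschitz continuous with $\lip(L)=\delta$: indeed, outside of $[-\delta,\delta]$ the slope is $\pm \delta$, and inside this interval the slope is bounded by $\delta$ in absolute value. Since by hypothesis $\Xi = \RR^{n+1}$, Theorem~\ref{Thrm:Regression}(ii) applies directly and shows that problem~\eqref{distrob:wass} is equivalent to
\begin{align*}
\Inf_{\w} ~ \frac{1}{N} \SumN L(\inner{\w}{\widehat{\x}_i} - \widehat{y}_i) + \eps\, \delta\, \|(\w,-1)\|_*.
\end{align*}

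The key algebraic step is to express the Huber loss as the infimal convolution of the quadratic $\tfrac{1}{2}z^2$ and the scaled absolute value $\delta|z|$, namely
\begin{align*}
L(t) \;=\; \Inf_{z\in\RR} ~ \tfrac{1}{2} z^2 + \delta\,|t - z|.
\end{align*}
To verify this identity I would analyze the inner minimization by first-order (subgradient) conditions: setting the derivative to zero yields $z \in \delta\,\partial|t-z|$, which gives $z = t$ whenever $|t|\le \delta$ (producing the quadratic branch $\tfrac{1}{2}t^2$) and $z = \sgn(t)\,\delta$ whenever $|t| > \delta$ (producing the affine branch $\delta(|t| - \tfrac{1}{2}\delta)$). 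Both cases match the definition of $L$ given in Section~\ref{sec:stat-learn}.

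Substituting this representation into the objective for each residual $t_i = \inner{\w}{\widehat{\x}_i} - \widehat{y}_i$ introduces one auxiliary variable $z_i$ per training sample. Since the $z_i$ appear only inside the sum and independently of $\w$ (except through $t_i$, which is free to optimize over via $\w$), we may interchange the outer infimum over $\w$ with the inner infima over $z_1,\dots,z_N$, yielding precisely the formulation~\eqref{Huber:rob}.

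The main routine obstacle is the verification of the infimal convolution identity; this amounts to a short case analysis but is genuinely where the specific structure of the Huber loss is used. Everything else is bookkeeping: confirming the Lipschitz modulus to get the correct regularization coefficient $\eps\,\delta$ and invoking the interchange of infima, which is unconditional because the joint objective is jointly separable in $(z_i)_{i=1}^N$ given $\w$.
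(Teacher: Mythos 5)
Your proof is correct and follows the paper's argument exactly: both invoke Theorem~\ref{Thrm:Regression}(ii) with $\lip(L)=\delta$ and then rewrite the Huber loss as the infimal convolution of $\tfrac{1}{2}z^2$ and $\delta|z|$, so that introducing the auxiliary variables $z_i$ and interchanging infima yields~\eqref{Huber:rob}.
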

	
	\begin{Crl} [Support vector regression] 
		\label{Crl:svr}
		If $L$ represents the $\epsilon$-insensitive loss function for some $\epsilon\geq 0$ and $\Xi$ is of the form~\eqref{uncertainty-set}, then \eqref{distrob:wass} is equivalent to
		\begin{align} \label{SVR:rob}
		\optimize{
			\Inf_{\substack{\w, \lambda, s_i \\ \p^+_i,\p^-_i}} & \displaystyle \Obj & \\ [0.2em]
			\mathrm{s.t.} & \widehat{y}_i - \inner{\w}{\widehat{\x}_i} - \epsilon + \inner{\p^+_i}{\bm d - \bm C_1 \widehat{\x}_i - \bm c_2 \widehat{y}_i} \leq s_i & i \in [N] \\	[0.2em]
			& \inner{\w}{\widehat{\x}_i} - \widehat{y}_i - \epsilon + \inner{\p^-_i}{\bm d - \bm C_1 \widehat{\x}_i - \bm c_2 \widehat{y}_i} \leq s_i & i \in [N] \\ [0.2em]	
			& \left\| \left( \bm C_1^\top \p^+_i + \w, \bm c_2^\top \p^+_i - 1 \right) \right\|_* \leq \lambda & i \in [N] \\ [0.2em]
			& \left\| \left( \bm C_1^\top \p^-_i - \w, \bm c_2^\top \p^-_i + 1 \right) \right\|_* \leq \lambda & i \in [N] \\ [0.2em]	
			& \p^+_i,\p^-_i \in \CC^* & i \in [N] \\ [0.2em]
			& s_i \geq 0 & i \in [N].}
		\end{align}	
	\end{Crl}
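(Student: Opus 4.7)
\textbf{Proof plan for Corollary~\ref{Crl:svr}.}
The approach is to reduce Corollary~\ref{Crl:svr} to a direct instantiation of Theorem~\ref{Thrm:Regression}(i). First I would observe that the $\epsilon$-insensitive loss function is piecewise affine with exactly $J=3$ pieces,
\begin{align*}
L(z)=\max\{0,\,|z|-\epsilon\}=\max\{a_1 z+b_1,\, a_2 z+b_2,\, a_3 z+b_3\},
\end{align*}
where $(a_1,b_1)=(0,0)$, $(a_2,b_2)=(1,-\epsilon)$ and $(a_3,b_3)=(-1,-\epsilon)$. Plugging these coefficients into \eqref{tractable:regression:linear} yields a finite convex program with three constraint families indexed by $j\in\{1,2,3\}$ and the objective $\lambda\eps+\frac{1}{N}\sum_i s_i$, which already matches the objective of \eqref{SVR:rob}.

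The next step is to evaluate the support function $S_\Xi$ appearing in each constraint. Since $\Xi=\{(\x,y):\bm C_1\x+\bm c_2 y \preceq_\CC \bm d\}$ admits a Slater point, strong conic duality gives
\begin{align*}
S_\Xi(\bm v,w) \;=\; \inf_{\p \in \CC^*}\,\Big\{\inner{\p}{\bm d}\;:\;\bm C_1^\top\p=\bm v,\;\bm c_2^\top\p=w\Big\}.
\end{align*}
I would then absorb this inner infimum into the outer minimization of \eqref{tractable:regression:linear} by introducing, for each sample $i$ and each affine piece $j$, a dual vector $\p\in\CC^*$. Using the equality constraints $\bm C_1^\top\p=a_j\w-\p_{ij}$ and $\bm c_2^\top\p=-a_j-u_{ij}$, we can eliminate the auxiliary variables $\p_{ij}$ and $u_{ij}$ by solving $\p_{ij}=a_j\w-\bm C_1^\top\p$ and $u_{ij}=-a_j-\bm c_2^\top\p$.

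I would then treat the three values of $j$ one by one. For $j=1$ the dual vector $\p$ is unconstrained in sign only by $\p\in\CC^*$, and the choice $\p=0$, $\p_{i1}=0$, $u_{i1}=0$ is feasible and attains value $0$; it is optimal because we are minimizing $s_i$, so the $j=1$ family collapses exactly to the constraint $s_i\geq 0$ in \eqref{SVR:rob}. For $j=2$, renaming the dual variable $\p\mapsto\p^-_i$ and substituting yields both $\inner{\w}{\widehat\x_i}-\widehat y_i-\epsilon + \inner{\p^-_i}{\bm d-\bm C_1\widehat\x_i-\bm c_2\widehat y_i}\leq s_i$ after straightforward algebraic cancellation, and, using the sign-invariance of the dual norm, the norm bound $\|(\bm C_1^\top\p^-_i-\w,\,\bm c_2^\top\p^-_i+1)\|_*\leq\lambda$. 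The case $j=3$ proceeds identically with $\p\mapsto\p^+_i$ and produces the remaining inequality and norm bound in \eqref{SVR:rob}, together with the cone membership $\p^+_i,\p^-_i\in\CC^*$ inherited from conic duality.

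The routine algebraic substitutions are entirely mechanical; the only genuinely delicate step is the application of strong conic duality to replace $S_\Xi$ by a minimization problem and the justification that this minimization can be absorbed into the outer one without creating a duality gap. Here the Slater assumption on $\Xi$ is precisely what is needed to legitimize the argument, so this will be the main item to flag carefully. The remainder amounts to bookkeeping of signs across the three affine pieces.
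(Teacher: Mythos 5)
Your proof is correct and takes essentially the same route as the paper's: identify the $\epsilon$-insensitive loss as a piecewise affine function with $J=3$ pieces, appeal to strong conic duality (via the Slater point of~$\Xi$) to rewrite $S_\Xi$ as an infimum over the dual cone, and then plug into Theorem~\ref{Thrm:Regression}(i) and simplify. Your write-up is more detailed than the paper's (which delegates the bookkeeping to the reader), and the sign-matching and the reduction of the $j=1$ family to $s_i\geq 0$ are handled correctly.
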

	
	\begin{Crl} [Quantile regression]
		\label{Crl:qr}
		If $L$ represents the pinball loss function for some $\tau\in [0,1]$ and $\Xi$ is of the form~\eqref{uncertainty-set}, then \eqref{distrob:wass} is equivalent to
		\begin{align} \label{Quant:rob}
		\optimize{
			\Inf_{\substack{\w, \lambda, s_i \\ \p^+_i,\p^-_i}} & \displaystyle \Obj  & \\
			\mathrm{s.t.} & \tau \left(\widehat y_i - \inner{\w}{\widehat \x_i} \right)  + \inner{\p^+_i}{\bm d - \bm C_1 \widehat{\x}_i - \bm c_2 \widehat{y}_i} \leq s_i & i \in [N] \\ [0.2em]
			& (1-\tau) \left( \inner{\w}{\widehat \x_i} - \widehat y_i \right) + \inner{\p^-_i}{\bm d - \bm C_1 \widehat{\x}_i - \bm c_2 \widehat{y}_i} \leq s_i & i \in [N] \\ [0.2em]
			& \left\| \left( \bm C_1^\top \p^+_i + \tau \w, \bm c_2^\top \p^+_i - \tau \right) \right\|_* \leq \lambda & i \in [N] \\ [0.2em]
			& \left\| \left( \bm C_1^\top \p^-_i - ( 1 - \tau ) \w, \bm c_2^\top \p^-_i + 1 - \tau \right) \right\|_* \leq \lambda & i \in [N] \\ [0.2em]
			& \p^+_i,\p^-_i \in \CC^* & i \in [N] \\ [0.2em]
			& s_i \geq 0 & i \in [N].}
		\end{align}	
	\end{Crl}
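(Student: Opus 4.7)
The plan is to derive this corollary as a direct specialisation of Theorem~\ref{Thrm:Regression}(i), since the pinball loss is piecewise affine. First I would write $L(z) = \max\{-\tau z, (1-\tau) z\}$, and observe that because $\tau \in [0,1]$ the two pieces are both $\geq 0$ at $z=0$, so in fact $L(z) = \max\{0,\,-\tau z,\,(1-\tau) z\}$, i.e.\ $J = 3$ with coefficients $(a_1, b_1) = (-\tau, 0)$, $(a_2, b_2) = (1-\tau, 0)$ and $(a_3, b_3) = (0,0)$. Plugging these into the reformulation \eqref{tractable:regression:linear} gives, for each $i$ and $j$, a constraint involving the support function $S_\Xi(a_j \w - \p_{ij},\,-a_j - u_{ij})$ and a dual-norm bound.

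Next I would evaluate the support function of the conic set $\Xi = \{(\x,y) : \bm C_1 \x + \bm c_2 y \preceq_{\CC} \bm d\}$. Since $\Xi$ admits a Slater point, strong conic duality applies, and a one-line Lagrangian computation yields
\[
S_\Xi(\uu, v) \;=\; \min_{\q\in\CC^*}\bigl\{\inner{\q}{\bm d}\;:\; \bm C_1^\top \q = \uu,\; \bm c_2^\top \q = v\bigr\}.
\]
Substituting this minimax representation into the constraint of \eqref{tractable:regression:linear} allows me to absorb the inner minimisation into the outer infimum, producing a dual multiplier $\q_{ij}\in\CC^*$ for each $(i,j)$. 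Solving the equality constraints for $\p_{ij} = a_j \w - \bm C_1^\top \q_{ij}$ and $u_{ij} = -a_j - \bm c_2^\top \q_{ij}$ and substituting back reduces the $(i,j)$-constraint to
\[
a_j(\inner{\w}{\widehat\x_i} - \widehat y_i) + b_j + \inner{\q_{ij}}{\bm d - \bm C_1 \widehat\x_i - \bm c_2 \widehat y_i} \;\leq\; s_i.
\]

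Renaming $\q_{i1} = \p^+_i$ and $\q_{i2} = \p^-_i$ and inserting $(a_1,b_1)=(-\tau,0)$ and $(a_2,b_2)=(1-\tau,0)$ yields exactly the first two families of inequalities in \eqref{Quant:rob}. For $j=3$, where $a_3 = b_3 = 0$, the optimal choice is $\q_{i3} = 0 \in \CC^*$, which makes the constraint collapse to $0 \leq s_i$; this recovers the sign restriction $s_i \geq 0$. The dual-norm bounds $\|(\p_{ij}, u_{ij})\|_* \leq \lambda$ translate, after the same substitution and using $\|{-\bm v}\|_* = \|\bm v\|_*$, into $\|(\bm C_1^\top \p^+_i + \tau \w,\, \bm c_2^\top \p^+_i - \tau)\|_* \leq \lambda$ and $\|(\bm C_1^\top \p^-_i - (1-\tau)\w,\,\bm c_2^\top \p^-_i + 1-\tau)\|_* \leq \lambda$, as desired; for $j=3$ the choice $\q_{i3}=0$ trivially satisfies the bound.

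The only non-routine step is the dualisation of $S_\Xi$, and this is standard once the Slater condition is invoked; the rest is essentially bookkeeping of which sign conventions appear inside the dual norm. I would write the proof in exactly this order: piecewise-affine representation, invocation of Theorem~\ref{Thrm:Regression}(i), conic dualisation of $S_\Xi$, variable substitution, and identification of the three pieces with the three families of constraints in \eqref{Quant:rob}.
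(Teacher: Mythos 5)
Your proof is correct and follows the paper's template: invoke Theorem~\ref{Thrm:Regression}(i) for a piecewise-affine $L$, dualise $S_\Xi$ via strong conic duality under the Slater condition (exactly as the paper does in the proof of Corollary~\ref{Crl:svr}), and substitute. The one divergence is that you take $J=3$ by padding the pinball loss with a redundant zero piece, while the paper's proof states that $J=2$ suffices. Your choice is tidier in one respect: the $j=3$ constraint, evaluated at the optimal $\q_{i3}=\bm 0\in\CC^*$, yields $s_i\ge 0$ directly (and the dual-norm bound for $j=3$ is then vacuous). Under the paper's $J=2$ route, the $s_i\ge 0$ constraint does not come out of the raw reformulation; it is nonetheless a valid (redundant) addition to \eqref{Quant:rob} because $\p^+_i,\p^-_i\in\CC^*$ and $\bm d-\bm C_1\widehat{\x}_i-\bm c_2\widehat{y}_i\in\CC$ make the conic inner products non-negative, while one of $\tau(\widehat y_i-\inner{\w}{\widehat\x_i})$ and $(1-\tau)(\inner{\w}{\widehat\x_i}-\widehat y_i)$ is always non-negative, so the two remaining constraints already force $s_i\ge L(\inner{\w}{\widehat\x_i}-\widehat y_i)\ge 0$. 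Both routes deliver the stated corollary.
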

	
	\begin{Rmk} [Relation to classical regularization]
		Assume now that the mass transportation costs are additively separable with respect to inputs and outputs, that is,
		\begin{equation} \label{eq:metric:additive}
		d((\x_1,y_1),(\x_2,y_2))= \| \x_1 - \x_2 \| + \kappa | y_1 - y_2 |
		\end{equation}
		for some $\kappa>0$.\footnote{By slight abuse of notation, the symbol $\|\cdot\|$ now denotes a norm on $\RR^n$.} Note that $\kappa$ captures the costs of moving probability mass along the output space. For $\kappa = \infty$ all distributions in the Wasserstein ball $\Wball$ are thus obtained by reshaping $\Pem$ only along the input space. It is easy to verify that for $\kappa = \infty$ and $ \Xi = \RR^{n+1}$ the learning models portrayed in Corollaries~\ref{Crl:hr}-\ref{Crl:qr} all simplify to 
		\begin{align} \label{reg_reg}
		\Inf_{\w} ~  \frac{1}{N} \SumN L(\inner{\w}{\widehat{\x}_i}- \widehat{y}_i) + c \| \w \|_*,
		\end{align}	
		where $c = \eps \delta$ for robust regression with Huber loss, $c=\eps$ for support vector regression with $\epsilon$-insensitive loss and $c=\max \{\tau, 1-\tau \} \, \eps$ for quantile regression with pinball loss. Thus, \eqref{reg_reg} is easily identified as an instance of the classical regularized learning problem~\eqref{in-sample-regularized}, where the dual norm term $\| \w \|_*$ plays the role of the regularization function, while $c$ represents the usual regularization weight. 
		By definition of the dual norm, the penalty $\| \w \|_*$ assigned to a hypothesis $\w$ is maximal (minimal) if the cost of moving probability mass along $\w$ is minimal (maximal). We emphasize that if $\kappa=\infty$, then the marginal distribution of $y$ corresponding to {\em every} $\QQ \in \Wball$ coincides with the empirical distribution $\frac{1}{N}\sum_{i=1}^N \delta_{\widehat y_i}$. Thus, classical regularization methods, which correspond to $\kappa = \infty$, are explained by a counterintuitive probabilistic model, which pretends that any training sample must have an output that has already been recordeded in the training dataset. In other words, classical regularization implicitly assumes that there is no uncertainty in the outputs. More intuitively appealing regularization schemes are obtained for finite values of $\kappa$.
	\end{Rmk}
	
	To establish a connection between distributionally robust and classical robust regression as discussed in \citep{el1997robust, xu2010robust}, we further investigate the worst-case expected loss of a fixed linear hypothesis $\w$.
	\begin{equation}
	\label{wc-expectation-regression}
	\sup_ {\QQ \in \Wball} \EE ^ \QQ [ L(\inner{\w}{\x}-y) ]
	\end{equation}
	\begin{Thrm} [Extremal distributions in linear regression]
		\label{Thrm:worst-case:regression}
		The following statements hold.
		\begin{enumerate}
			\item[(i)] If $L(z)= \max_{j\leq J} \{ a_j z + b_j \}$, then the worst-case expectation~\eqref{wc-expectation-regression} coincides with
			\begin{align} \label{worst:regression:linear}
			& \optimize{
				\Sup_{ \alpha_{ij}, \q_{ij}, v_{ij} } & \displaystyle \frac{1}{N} \sum_{i=1}^N\sum_{j=1}^J \alpha_{ij} \big( a_j (\inner{\w}{\widehat \x_i} - \widehat y_i) + b_j \big) + a_j (\inner{\w}{\q_{ij}} - v_{ij}) & \\
				\mathrm{s.t.} &  \displaystyle  \frac{1}{N}\sum_{i=1}^N\sum_{j=1}^J \| (\q_{ij},v_{ij}) \| \leq \eps &  \\
				& \displaystyle\sum_{j=1}^J \alpha_{ij} = 1 & \hspace{-3cm}  i \in [N] \\
				& \left( \widehat{\x}_i + \q_{ij}/\alpha_{ij}, \widehat y_i + v_{ij} / \alpha_{ij} \right) \in \Xi & \hspace{-3cm} i \in [N], j \in [J] \\
				& \alpha_{ij} \geq 0 &  \hspace{-3cm} i \in [N], j \in [J]}
			\end{align}	
			for any fixed hypothesis $\w$. Moreover, if $(\alpha\opt_{ij}, \q\opt_{ij}, v\opt_{ij})$ maximizes~\eqref{worst:regression:linear}, then the discrete distribution
			\begin{align*}
			\QQ^* = \frac{1}{N} \sum_{i = 1}^{N}\sum_{j = 1}^{J} \alpha\opt_{ij}\, \delta_{(\widehat{\x}_i + \q\opt_{ij} / \alpha\opt_{ij}, \; \widehat y_i + v\opt_{ij} / \alpha\opt_{ij})},
			\end{align*}
			represents a maximizer for \eqref{wc-expectation-regression}.
			\item[(ii)] If $\Xi = \RR^{n+1}$ and $L(z)$ is Lipschitz continuous, then the discrete distributions
			\begin{align*}
			\QQ_\gamma = \frac{1}{N} \sum_{i = 2}^{N} \delta_{(\widehat \x_{i}, \,\widehat y_i)}  + \frac{1 - \gamma}{N} \delta_{(\widehat \x_{1}, \,\widehat y_1)} + \frac{\gamma}{N} \delta_{(\widehat \x_1 + \frac{\eps N}{\gamma} \x\opt, \widehat y_1 + \frac{\eps N}{\gamma} y\opt) } \quad\text{for}\quad \gamma\in(0,1],
			\end{align*}
			where $(\x\opt, y\opt)$ solves $\max_{\| (\x, y) \| \leq 1}\inner{\w}{\x} - y$, are feasible and asymptotically optimal in \eqref{wc-expectation-regression} for $\gamma\downarrow 0$.
		\end{enumerate}	
	\end{Thrm}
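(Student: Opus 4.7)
The plan is to evaluate the primal supremum directly in each case.

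For \textbf{part (i)}, I would disintegrate every coupling $\Pi$ with second marginal $\Pem$ as $\Pi = \frac{1}{N}\sum_i \delta_{(\widehat\x_i,\widehat y_i)}\otimes\QQ_i$, reducing the primal to $\sup \frac{1}{N}\sum_i \int L\,\diff\QQ_i$ subject to $\frac{1}{N}\sum_i \int\|(\x-\widehat\x_i,y-\widehat y_i)\|\,\diff\QQ_i\leq\eps$ and each $\QQ_i$ supported on $\Xi$. Using the piecewise affine structure of $L$, I would partition each $\QQ_i$ along the (convex) level sets $I_j:=\{z : L(z)=a_j z+b_j\}$, letting $\alpha_{ij}$ denote the mass on $I_j$ and $\QQ_{ij}$ the normalized restriction. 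Since $L$ is linear on $I_j$, the integral $\int L\diff\QQ_{ij}$ depends only on the barycenter $(\bar\x_{ij},\bar y_{ij})\in\Xi\cap I_j$, so Jensen's inequality allows me to collapse $\QQ_{ij}$ to a Dirac mass at its barycenter without decreasing the objective or loosening the transport constraint. With $\q_{ij}:=\alpha_{ij}(\bar\x_{ij}-\widehat\x_i)$, $v_{ij}:=\alpha_{ij}(\bar y_{ij}-\widehat y_i)$ and positive homogeneity of the norm, the reformulated problem matches \eqref{worst:regression:linear}. Conversely, any feasible $(\alpha,\q,v)$ of \eqref{worst:regression:linear} defines an atomic measure $\QQ$ that belongs to $\Wball$ and satisfies $\int L\,\diff\QQ \geq $ its objective value (since $L(\bar z_{ij})\geq a_j\bar z_{ij}+b_j$). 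The two inequalities are tight at any optimizer $(\alpha^*,\q^*,v^*)$, which forces $L(\bar z^*_{ij})=a_j\bar z^*_{ij}+b_j$ whenever $\alpha^*_{ij}>0$, and hence $\QQ^*$ attains the supremum.

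For \textbf{part (ii)}, feasibility is immediate: the coupling that keeps all mass at $(\widehat\x_i,\widehat y_i)$ for $i\geq 2$, retains a fraction $1-\gamma$ at $(\widehat\x_1,\widehat y_1)$, and ships the remaining $\gamma/N$ mass to $(\widehat\x_1+\tfrac{\eps N}{\gamma}\x\opt,\widehat y_1+\tfrac{\eps N}{\gamma}y\opt)$ has total cost $(\gamma/N)\cdot(\eps N/\gamma)\|(\x\opt,y\opt)\|\leq\eps$. Using $\inner{\w}{\x\opt}-y\opt=\|(\w,-1)\|_*$, the expected loss under $\QQ_\gamma$ reads
\begin{align*}
\frac{1}{N}\sum_{i=2}^{N} L(\inner{\w}{\widehat\x_i}-\widehat y_i) + \frac{1-\gamma}{N}L(A) + \frac{\gamma}{N}L\bigl(A+\tfrac{\eps N}{\gamma}\|(\w,-1)\|_*\bigr),
\end{align*}
where $A:=\inner{\w}{\widehat\x_1}-\widehat y_1$. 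The first two terms converge to $\frac{1}{N}\sum_i L(\inner{\w}{\widehat\x_i}-\widehat y_i)$ as $\gamma\downarrow 0$, while the third can be written as $\eps\|(\w,-1)\|_*\cdot L(A+Tc)/(Tc)$ with $T:=\eps N/\gamma\to\infty$ and $c:=\|(\w,-1)\|_*$. The Lipschitz bound $|L(A+Tc)-L(Tc)|\leq\lip|A|$ combined with the definition of the Lipschitz modulus shows that this ratio tends to $\lip$ (after, if necessary, flipping the sign of $(\x\opt,y\opt)$ so that the asymptotic slope of $L$ in the displacement direction equals $\lip$). Matching this limit with the optimal value $\frac{1}{N}\sum_i L(\inner{\w}{\widehat\x_i}-\widehat y_i)+\eps\lip\|(\w,-1)\|_*$ supplied by Theorem~\ref{Thrm:Regression}(ii) gives asymptotic optimality.

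The \textbf{main technical hurdle} lies in part (i): executing the level-set partition measurably, and ensuring that the extended arithmetic convention $0/0=0$ handles indices $(i,j)$ with $\alpha^*_{ij}=0$ consistently, since such atoms carry no mass and may simply be omitted from $\QQ^*$ without affecting either the transport bound or support inclusion. A secondary subtlety in part (ii) is that the right-sided asymptotic slope of $L$ along the chosen displacement direction may a priori differ from $\lip$; for all canonical loss functions in Section~\ref{sec:stat-learn} the correct choice of sign in $(\x\opt,y\opt)$ yields the required equality directly.
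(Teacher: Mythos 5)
Your proposal is correct and proceeds via a genuinely different route, most visibly in part~(i). The paper obtains \eqref{worst:regression:linear} by dualizing the finite convex reformulation \eqref{tractable:regression:linear} established in Theorem~\ref{Thrm:Regression}(i): it forms the Lagrangian, interchanges $\inf$ and $\sup$ under Slater's condition, and invokes a dedicated support-function identity (Lemma~\ref{Lmm:worst}) to clear the inner minimization over the multipliers $\p_{ij}, u_{ij}$. You instead attack the primal Wasserstein problem directly — disintegrate the coupling into conditionals $\QQ_i$, split each $\QQ_i$ across the affine pieces of $L$, and collapse each restricted measure to its barycenter; Jensen applied to the norm preserves feasibility while linearity of the active piece preserves the objective. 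This is more elementary (it bypasses Theorem~\ref{Thrm:Regression}(i) altogether), and it delivers the ``moreover'' claim essentially for free through the sandwich: the objective value of $(\alpha\opt,\q\opt,v\opt)$ is bounded above by $\EE^{\QQ^*}[L(\inner{\w}{\x}-y)]$, which is bounded above by the worst-case expectation, which equals that same objective value — so all are equal. The paper's duality proof never addresses the ``moreover'' part explicitly, so your argument actually closes that gap. What the paper's route buys is mechanical uniformity with Theorem~\ref{Thrm:worst-case:classification} and the avoidance of the (routine but nonzero) bookkeeping you flag: building a measurable selection of the knot-set partition and checking that barycenters stay in $\Xi$, which needs the standing convexity of $\XX$ and $\YY$. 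For part~(ii) the feasibility arguments coincide; the paper evaluates the limit via Fenchel's inequality with conjugate argument $\lip(\w,-1)$, whereas you take the asymptotic-slope route. Both hinge on the same tacit hypothesis, namely that $\lip$ equals the one-sided asymptotic slope $\lim_{z\to\infty}L'(z)$ (equivalently $L^*(\lip)<\infty$); for asymmetric losses such as the pinball loss with $\tau\ne \tfrac{1}{2}$ this fails, the paper's Fenchel bound becomes vacuous, and — as you observe — $(\x\opt,y\opt)$ must be replaced by its negative. Your version has the virtue of surfacing this hypothesis rather than burying it inside $\loss^*$.
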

	Recall that $0/0 = 0$ and $1/0 = \infty$ by our conventions of extended arithmetic. Thus, any solution feasible in~\eqref{worst:regression:linear} with $\alpha_{ij}=0$ must satisfy $\q_{ij} = \bm 0$ and $v_{ij} = 0$ because otherwise $\left( \widehat{\x}_i + \q_{ij}/\alpha_{ij}, \widehat y_i + v_{ij} / \alpha_{ij} \right) \notin \Xi$.
	
	Theorem~\ref{Thrm:worst-case:regression} shows how one can use convex optimization to construct a sequence of distributions that are asymptotically optimal in~\eqref{wc-expectation-regression}. Next, we argue that the worst-case expected cost~\eqref{wc-expectation-regression} is equivalent to a (robust) worst-case cost over a suitably defined uncertainty set if the following assumption holds.
	\begin{Asmp}[Minimal dispersion]
		\label{Asmp:sep:reg}
		For every $\w \in \RR^n$ there is a training sample $(\widehat \x_k, \widehat y_k)$ for some $k\leq N$ such that the derivative $L'$ exists at $\inner{\w}{\widehat \x_k}- \widehat y_k$ and satisfies $|L'(\inner{\w}{\widehat \x_k}- \widehat y_k)| = \lip$.
	\end{Asmp}
	
	\begin{Rmk} [Minimal dispersion]
		Assumption~\ref{Asmp:sep:reg} is reminiscent of the non-separability condition in~\citep[Theorem~3]{xu2009robustness}, which is necessary to prove the equivalence of robust and regularized support vector machines. In the regression context studied here, Assumption~\ref{Asmp:sep:reg} ensures that, for every $\w$, there exists a training sample that activates the largest absolute slope of $L$.
		
		For instance, in support vector regression, it means that for every $\w$ there exists a data point outside of the slab of width $2\epsilon/\|(\w,-1)\|_2$ centered around the hyperplane $H_{\w}=\{(\x, y) \in \RR^n \times \RR: \inner{\w}{\x} - y = 0 \}$ ({\em i.e.}, the empirical $\epsilon$-insensitive loss is not zero). Similarly, in robust regression with the Huber loss function, Assumption~\ref{Asmp:sep:reg} stipulates that for every $\w$ there exists a data point outside of the slab of width $2\delta/\|(\w,-1)\|_2$ centered around $H_{\w}$.
		However, quantile regression with $\tau \neq 0.5$ fails to satisfy Assumption~\ref{Asmp:sep:reg}. Indeed, for any training dataset there always exists some $\w$ such that all data points reside on the side of $H_{\w}$ where the pinball loss function is less steep.
	\end{Rmk}
	
	\begin{Thrm}[Robust regression] \label{Thrm:RobConnection}
		If $\Xi = \RR^{n+1}$ and the loss function $L(z)$ is Lipschitz continuous, then the worst-case expected loss~\eqref{wc-expectation-regression} provides an upper bound on the (robust) worst-case loss
		\begin{align}
		\label{wc-regression-loss}
		\optimize{
			\Sup_{\Delta \x_{i}, \Delta y_{i}} & \displaystyle \frac{1}{N} \sum_{i=1}^N  [ L(\inner{\w}{\widehat \x_i + \Delta \x_{i}}-\widehat y_i - \Delta y_{i})]\\
			\mathrm{s.t.} & \displaystyle \frac{1}{N}\sum_{i=1}^N \| (\Delta \x_{i}, \Delta y_{i}) \| \leq  \eps. & }
		\end{align}
		Moreover, if Assumption~\ref{Asmp:sep:reg} holds, then \eqref{wc-expectation-regression} and \eqref{wc-regression-loss} are equal.	
	\end{Thrm}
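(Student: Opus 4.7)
The plan is to prove the two assertions separately: the inequality $\eqref{wc-regression-loss}\leq\eqref{wc-expectation-regression}$ should hold unconditionally, whereas the reverse inequality will require Assumption~\ref{Asmp:sep:reg}.

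\textbf{Unconditional inequality.} For any perturbations $\{(\Delta\x_i,\Delta y_i)\}_{i=1}^N$ feasible in \eqref{wc-regression-loss}, I would form the discrete measure
\[
\QQ_\Delta \;=\; \frac{1}{N}\sum_{i=1}^N \delta_{(\widehat\x_i + \Delta\x_i,\,\widehat y_i+\Delta y_i)}
\]
and exhibit the explicit coupling $\Pi = \frac{1}{N}\sum_{i=1}^N \delta_{(\widehat\x_i,\widehat y_i)}\otimes\delta_{(\widehat\x_i+\Delta\x_i,\widehat y_i+\Delta y_i)}$ between $\Pem$ and $\QQ_\Delta$. Because $d$ is induced by the norm $\|\cdot\|$, the transportation cost of $\Pi$ equals $\frac{1}{N}\sum_i\|(\Delta\x_i,\Delta y_i)\|\leq\eps$, so $\QQ_\Delta\in\Wball$. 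Since $\EE^{\QQ_\Delta}[L(\inner{\w}{\x}-y)]$ coincides with the objective of \eqref{wc-regression-loss} for this perturbation, taking suprema yields the desired inequality.

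\textbf{Equality under minimal dispersion.} Theorem~\ref{Thrm:Regression}(ii), applied with $\w$ held fixed so as to evaluate only the inner supremum, identifies \eqref{wc-expectation-regression} with the closed form $\frac{1}{N}\sum_{i=1}^{N} L(\inner{\w}{\widehat\x_i}-\widehat y_i) + \eps\,\lip\,\|(\w,-1)\|_*$; it therefore suffices to exhibit a feasible perturbation in \eqref{wc-regression-loss} attaining this value. Assumption~\ref{Asmp:sep:reg} furnishes an index $k$ with $|L'(z_k)|=\lip$ at $z_k:=\inner{\w}{\widehat\x_k}-\widehat y_k$; assume $L'(z_k)=\lip$ (the opposite sign is symmetric). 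Since $L$ is convex and $\lip$-Lipschitz, its subgradient is nondecreasing and bounded above by $\lip$, so $L'(z)\equiv\lip$ on $[z_k,\infty)$, which forces $L(z_k+t)=L(z_k)+\lip\,t$ for every $t\geq0$. Choosing $(\x\opt,y\opt)$ that attains $\max\{\inner{\w}{\x}-y:\|(\x,y)\|\leq1\}=\|(\w,-1)\|_*$ and setting $(\Delta\x_k,\Delta y_k)=(\eps N\,\x\opt,\eps N\, y\opt)$ while $(\Delta\x_i,\Delta y_i)=(\bm 0,0)$ for $i\neq k$ exhausts the budget exactly and perturbs $z_k$ by $\eps N\,\|(\w,-1)\|_*>0$. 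The resulting objective becomes $\frac{1}{N}\sum_i L(z_i)+\eps\,\lip\,\|(\w,-1)\|_*$, matching the target.

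\textbf{Main obstacle.} The non-routine step is the convexity-plus-Lipschitz argument that promotes a single point where the slope attains $\lip$ into an entire half-line of affinity. This is precisely what permits an order-$\eps N$ perturbation concentrated on a single sample to harvest the full $\lip$ slope without losing a factor, sidestepping the $\gamma\downarrow0$ mixture approximation used for the distributionally robust extremizer in Theorem~\ref{Thrm:worst-case:regression}(ii): in the robust formulation no splitting of atoms is needed, because atomic perturbations of individual data points are directly allowed. Once this linearization is in place, the two directions sandwich \eqref{wc-regression-loss} and \eqref{wc-expectation-regression} together and the claim follows.
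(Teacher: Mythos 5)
Your proof is correct, and for both directions it takes a cleaner route than the paper's. For the unconditional inequality, you exhibit the pushforward measure $\QQ_\Delta$ of $\Pem$ under the perturbation and certify membership in $\Wball$ via the diagonal coupling; the paper instead starts from the dual characterization of \eqref{wc-expectation-regression} obtained in Theorem~\ref{Thrm:worst-case:regression}(i) and carries out a chain of variable restrictions to reach the robust problem. Your primal argument is more elementary and avoids invoking Theorem~\ref{Thrm:worst-case:regression}(i) entirely. For the equality under Assumption~\ref{Asmp:sep:reg}, the paper works with piecewise-affine losses $L(z)=\max_j\{a_jz+b_j\}$ first (so that ``steepest piece'' is a literal statement) and then closes with a density/approximation remark for general Lipschitz $L$; you instead establish the needed half-line affinity $L(z_k+t)=L(z_k)+\lip\,t$ for $t\ge 0$ directly from convexity plus $\lip$-Lipschitz continuity — the two-sided sandwich $L(z_k)+\lip\,t \le L(z_k+t)\le L(z_k)+\lip\,t$ — which dispenses with the approximation step and treats general convex Lipschitz losses in one go. Both arguments rely on the same essential observation (the activated sample $k$ sits on a ray where the loss is affine with the maximal slope, so a single atomic $O(\eps N)$ push harvests the full $\eps\lip\|(\w,-1)\|_*$ penalty), and your closing remark correctly explains why no $\gamma\downarrow 0$ atom-splitting is needed here, unlike in the construction of extremal distributions for the Wasserstein ball. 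One cosmetic point: the claim that the perturbation ``exhausts the budget exactly'' uses that the dual-norm maximizer satisfies $\|(\x\opt,y\opt)\|=1$ (true since $(\w,-1)\neq\bm 0$), but feasibility only requires $\le\eps$, so this is not load-bearing.
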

	
	\begin{Rmk}[Tractability of robust regression]
		Assume that $\Xi=\RR^{n+1}$, while $L$ and $\|\cdot\|$ both admit a tractable conic representation. By Theorem~\ref{Thrm:Regression}, the worst-case {\em expected} loss~\eqref{wc-expectation-regression} can then be computed in polynomial time by solving a tractable convex program. Theorem~\ref{Thrm:RobConnection} thus implies that the worst-case loss~\eqref{wc-classification-loss} can also be computed in polynomial time if Assumption~\ref{Asmp:sep:reg} holds. To our best knowledge, there exists no generic efficient method for computing~\eqref{wc-classification-loss} if Assumption~\ref{Asmp:sep:reg} fails to hold and $L$ is {\em not} piecewise affine. This reinforces our belief that a distributionally robust approach to regression is more natural.
	\end{Rmk}
	
	\subsection{Distributionally Robust Linear Classification}
	\label{subsec:drclassification}
	Throughout this section we focus on linear classification problems, where $\ell(\inner{\w}{\x},y)=L(y\inner{\w}{\x})$ for some convex univariate loss function $L$. We also assume that $\XX$ is both convex and closed and that $\YY=\{+1,-1\}$. Moreover, we assume that the transportation metric $d$ is defined via
	\begin{align} \label{metric}
	\change{d((\x,y),(\x',y')) = \|\x-\x'\| + \kappa \mathds{1}_{ \{ y \neq y' \} } },
	\end{align}
	where $\|\cdot\|$ represents a norm on the input space $\RR^n$, and $\kappa>0$ quantifies the cost of switching a label. In this setting, the distributionally robust classification problem~\eqref{distrob:wass} admits an equivalent reformulation as a finite convex optimization problem if either ({\em i}) the univariate loss function $L$ is piecewise affine or ({\em ii}) $\XX=\RR^{n}$ and $L$ is Lipschitz continuous (but not necessarily piecewise affine).	
	\begin{Thrm} [Distributionally robust linear classification]
		\label{Thrm:Classification}
		The following statements hold. 
		\begin{enumerate} [label=\Roman*.]
			\item[(i)] If $L(z)= \max_{j\in J} \{ a_j z + b_j \}$, then \eqref{distrob:wass} is equivalent to
			\begin{align} \label{tractable:classification:linear}
			\optimize{
				\Inf_{ \substack{\w, \lambda,s_i \\ \p^+_{ij}, \p^-_{ij} } } & \displaystyle \Obj & \\
				\mathrm{s.t.} & S_\XX(a_j \widehat y_i \w - \p^+_{ij}) + b_j + \inner{\p^+_{ij}}{\widehat \x_i} \leq s_i & i \in [N], j \in [J] \\
				& S_\XX(-a_j \widehat y_i \w - \p^-_{ij}) + b_j + \inner{\p^-_{ij}}{\widehat \x_i} - \kappa \lambda \leq s_i & i \in [N], j \in [J] \\
				& \| \p^+_{ij} \|_* \leq \lambda, \| \p^-_{ij} \|_* \leq \lambda & i \in [N], j \in [J],}
			\end{align}
			where $S_{\XX}$ denotes the support function of $\XX$.
			\item[(ii)] If $\XX = \RR^{n}$ and $L$ is Lipschitz continuous, then \eqref{distrob:wass} is equivalent to		
			\begin{align} \label{tractable:classification:convex}
			\optimize{
				\Inf_{\w,\lambda,s_i} & \displaystyle \Obj & \\
				\text{s.t.} & L (\widehat{y}_i \inner{\w}{\widehat{\x}_i}) \leq s_i & i \in [N] \\
				& L (-\widehat{y}_i \inner{\w}{\widehat{\x}_i}) - \kappa \lambda \leq s_i & i \in [N] \\
				& \lip \| \w \|_* \leq \lambda.}
			\end{align}
		\end{enumerate}	
	\end{Thrm}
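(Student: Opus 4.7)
The proof parallels the argument used for Theorem~\ref{Thrm:Regression}, with the key new ingredient being that the output space $\YY = \{+1,-1\}$ is discrete. The first step is to invoke the Kantorovich-type strong duality result for Wasserstein DRO established in~\citep{MohKun-14}, giving
\begin{align*}
\sup_{\QQ \in \Wball} \EE^\QQ[L(y\inner{\w}{\x})] = \inf_{\lambda \geq 0}\left\{\lambda\eps + \frac{1}{N}\sum_{i=1}^N \sup_{(\x,y) \in \Xi}\left\{L(y\inner{\w}{\x}) - \lambda\, d((\x,y), (\widehat\x_i,\widehat y_i))\right\}\right\}.
\end{align*}
Because $y$ takes only two values, each inner supremum decomposes into two pieces: the branch $y = \widehat y_i$ contributes $\sup_{\x \in \XX}\{L(\widehat y_i\inner{\w}{\x}) - \lambda\|\x - \widehat\x_i\|\}$, while the branch $y = -\widehat y_i$ contributes the analogous expression with $\widehat y_i$ replaced by $-\widehat y_i$, shifted by the label-switching penalty $-\kappa\lambda$. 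Introducing an epigraphical slack $s_i$ that simultaneously upper-bounds both branches converts the outer problem into the skeleton common to parts~(i) and (ii).

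For part~(i), the piecewise affine structure $L(z) = \max_{j\leq J}\{a_j z + b_j\}$ lets me split each branch into $J$ pieces and impose one inequality per $j$, each of the form $\sup_{\x \in \XX}\{a_j(\pm\widehat y_i)\inner{\w}{\x} + b_j - \lambda\|\x - \widehat\x_i\|\} \leq s_i$ (up to the $-\kappa\lambda$ term). To dualise I use $-\lambda\|\x - \widehat\x_i\| = \inf_{\|\p\|_* \leq \lambda}\inner{\p}{\widehat\x_i - \x}$ and exchange $\sup_\x$ with $\inf_\p$ via Sion's minimax theorem, which applies because $\{\p : \|\p\|_* \leq \lambda\}$ is convex and compact while the integrand is bilinear. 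The inner supremum in $\x$ then collapses to the support function $S_\XX(a_j(\pm\widehat y_i)\w - \p) + b_j + \inner{\p}{\widehat\x_i}$, and promoting $\p$ to explicit decision variables $\p^+_{ij}, \p^-_{ij}$ produces exactly~\eqref{tractable:classification:linear}.

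For part~(ii), with $\XX = \RR^n$ and $L$ Lipschitz, the bound
\begin{align*}
L(\pm\widehat y_i\inner{\w}{\x}) - \lambda\|\x - \widehat\x_i\| \leq L(\pm\widehat y_i\inner{\w}{\widehat\x_i}) + (\lip\|\w\|_* - \lambda)\|\x - \widehat\x_i\|
\end{align*}
forces the inner supremum to equal $L(\pm\widehat y_i\inner{\w}{\widehat\x_i})$ whenever $\lambda \geq \lip\|\w\|_*$ (it is attained at $\x = \widehat\x_i$ and is asymptotically tight along the direction aligned with the maximal subgradient of $L \circ \inner{\w}{\cdot}$), and to equal $+\infty$ otherwise. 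This immediately yields the explicit constraint $\lip\|\w\|_* \leq \lambda$ together with the two per-sample inequalities of~\eqref{tractable:classification:convex}.

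The main obstacle is the initial invocation of Kantorovich strong duality without a duality gap, since the loss $\ell(\x,y) = L(y\inner{\w}{\x})$ depends on the discrete label $y$ through the hybrid metric~\eqref{metric} and is therefore only lower semicontinuous on $\Xi$. Fortunately, this case is covered by the version of the Wasserstein duality theorem in~\citep{MohKun-14} valid on Polish spaces for sufficiently integrable losses, applied separately on each of the two label sectors of $\Xi$. A minor subtlety is the boundary case $\lambda = 0$ in the minimax interchange of part~(i), which is absorbed by the extended-arithmetic convention $0\cdot\infty = 0$ adopted in the notation section.
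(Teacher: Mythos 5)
Your proposal is correct and follows essentially the same route as the paper: apply the Kantorovich-type duality from \citep{MohKun-14} (which the appendix packages as Lemma~\ref{Lmm:main}), split the inner supremum over the two admissible labels $y\in\{\widehat y_i,-\widehat y_i\}$, and then handle each branch by dualising the norm penalty against the dual-norm ball for the piecewise-affine case (the paper's Lemma~\ref{Lmm:linear}, which plays the role of your Sion/minimax step) or by the biconjugate/Lipschitz argument for the convex case (the paper's Lemma~\ref{Lmm:convex}, which your sketch ``attained at $\widehat\x_i$, blows up otherwise'' compresses). Two small observations: in part~(ii) your assertion that the supremum is $+\infty$ when $\lambda<\lip\|\w\|_*$ hides the work of showing that the slope $\lip$ is achieved only in the limit (the paper's Lemma~\ref{Lmm:convex} handles this via the effective domain of $L^*$), and your remark that the loss is ``only lower semicontinuous'' on $\Xi$ is not the actual concern — the loss is continuous on $\XX\times\{\pm1\}$ with its metric topology; the relevant point is simply that Lemma~\ref{Lmm:main} applies to any metric space $\Xi$, connected or not. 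Neither issue affects the validity of the argument.
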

	In the following, we exemplify Theorem~\ref{Thrm:Classification} for the hinge loss, logloss and smoothed hinge loss functions under the assumption that the input space $\XX$ admits the conic representation
	\begin{equation}
	\label{input-set}
	\XX = \{ \x \in \RR^n : \bm C \x \preceq_{\CC} \bm d \}
	\end{equation}
	for some matrix $\bm C$, vector $\bm d$ and proper convex cone $\CC$ of appropriate dimensions. We also assume that $\XX$ admits a Slater point $\x_{\rm S}\in\RR^n$ with $ \bm C \x_{\rm S} \prec_{\CC} \bm d$.
	
	\begin{Crl} [Support vector machine]
		\label{Crl:svm}
		If $L$ represents the hinge loss function and $\XX$ is of the form~\eqref{input-set}, then \eqref{distrob:wass} is equivalent to
		\begin{align} \label{HLP}
		\optimize{
			\Inf_{ \substack{\w,\lambda \\ s_i,\p^+_i,\p^-_i} } & \displaystyle \Obj & \\
			\mathrm{s.t.} & 1 - \widehat{y}_i \left( \inner{\w}{\widehat{\x}_i} \right) + \inner{\p^+_i}{\bm d - \bm C \widehat{\x}_i} \leq s_i & i \in [N] \\ [0.2em]
			& 1 + \widehat{y}_i \left( \inner{\w}{\widehat{\x}_i}  \right) + \inner{\p^-_i}{\bm d - \bm C \widehat{\x}_i} - \kappa \lambda \leq s_i & i \in [N] \\ [0.2em]
			& \| \bm C^\top \p^+_i + \widehat{y}_i \w \|_* \leq \lambda,\quad  \| \bm C^\top \p^-_i - \widehat{y}_i \w \|_* \leq \lambda & i \in [N] \\ [0.2em]
			&  s_i \geq 0,\quad \p^+_i, \p^-_i \in \CC^* & i \in [N].}
		\end{align}
	\end{Crl}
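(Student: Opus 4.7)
The strategy is to specialize Theorem~\ref{Thrm:Classification}(i) to the hinge loss and then translate the abstract support function $S_\XX$ into the explicit conic description~\eqref{input-set} via strong conic duality. Write the hinge loss as $L(z)=\max\{a_1 z+b_1,\,a_2 z+b_2\}$ with $(a_1,b_1)=(0,0)$ and $(a_2,b_2)=(-1,1)$. Then Theorem~\ref{Thrm:Classification}(i) applies with $J=2$ and yields, for every $i\in[N]$, four support-function inequalities (two for $j=1$ and two for $j=2$) together with the norm bounds $\|\p^+_{ij}\|_*\leq\lambda$ and $\|\p^-_{ij}\|_*\leq\lambda$.

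The $j=1$ pair is trivially redundant. With $a_1=b_1=0$ the inequalities read $S_\XX(-\p^+_{i1})+\inner{\p^+_{i1}}{\widehat\x_i}\leq s_i$ and $S_\XX(-\p^-_{i1})+\inner{\p^-_{i1}}{\widehat\x_i}-\kappa\lambda\leq s_i$. Substituting $\bm p=-\p^+_{i1}$, the left-hand side of the first equals $S_\XX(\bm p)-\inner{\bm p}{\widehat\x_i}$, which is non-negative because $\widehat\x_i\in\XX$, with equality at $\bm p=\bm 0$. Minimizing out $\p^+_{i1}$ and $\p^-_{i1}$ therefore collapses both inequalities to the single bound $s_i\geq 0$ that appears explicitly in~\eqref{HLP}.

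For the $j=2$ pair the inequalities become $S_\XX(-\widehat y_i\w-\p^+_{i2})+1+\inner{\p^+_{i2}}{\widehat\x_i}\leq s_i$ and $S_\XX(\widehat y_i\w-\p^-_{i2})+1+\inner{\p^-_{i2}}{\widehat\x_i}-\kappa\lambda\leq s_i$. The Slater assumption on $\XX$ guarantees strong conic duality for the support-function program, giving the representation $S_\XX(\bm u)=\inf\{\inner{\q}{\bm d}:\q\in\CC^*,\ \bm C^\top\q=\bm u\}$. Applying this with a fresh dual multiplier $\p^+_i\in\CC^*$ lets us eliminate the auxiliary variable $\p^+_{i2}$ by enforcing $\p^+_{i2}=-\widehat y_i\w-\bm C^\top\p^+_i$. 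Plugging this identity into the first $j=2$ inequality produces $1-\widehat y_i\inner{\w}{\widehat\x_i}+\inner{\p^+_i}{\bm d-\bm C\widehat\x_i}\leq s_i$, while the bound $\|\p^+_{i2}\|_*\leq\lambda$ becomes $\|\bm C^\top\p^+_i+\widehat y_i\w\|_*\leq\lambda$. An entirely symmetric manipulation with a dual variable $\p^-_i\in\CC^*$ handles the second $j=2$ constraint and yields the two remaining inequalities of~\eqref{HLP}.

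The only non-mechanical ingredient in the argument is the conic-duality representation of $S_\XX$, which is precisely where the Slater condition $\bm C\x_{\rm S}\prec_\CC\bm d$ is used; everything else is book-keeping, namely exploiting $y_i\in\{\pm 1\}$ and rewriting inner products. I expect the main obstacle to be merely presentational: one must verify that the change of variables $\p^+_{i2}\mapsto\p^+_i$ (and analogously $\p^-_{i2}\mapsto\p^-_i$) is bijective on the feasible set so that the passage from the form~\eqref{tractable:classification:linear} to~\eqref{HLP} is an equivalence (neither direction loses optimality), which follows because $\p^\pm_{i2}$ are free variables in Theorem~\ref{Thrm:Classification}(i).
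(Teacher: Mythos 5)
Your proof is correct and follows the same route as the paper: apply Theorem~\ref{Thrm:Classification}(i) with the $J=2$ affine pieces $(a_1,b_1)=(0,0)$ and $(a_2,b_2)=(-1,1)$ of the hinge loss, then invoke strong conic duality (justified by the Slater point for~$\XX$) to replace $S_\XX$ by its conic representation and absorb the auxiliary multipliers. The paper leaves the bookkeeping implicit---in particular the reduction of the $j=1$ constraints to $s_i\geq 0$---which you work out correctly; the concluding ``bijectivity'' remark is really just the routine elimination of a free variable through the linear equation $\bm C^\top\p_i^\pm=\mp\widehat y_i\w-\p^\pm_{i2}$, and causes no difficulty.
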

	
	\begin{Crl} [Support vector machine with smooth hinge loss]
		\label{Crl:hrc}
		If $L$ represents the smooth hinge loss function and $\XX=\RR^n$, then \eqref{distrob:wass} is equivalent~to
		\begin{align} \label{MHLM}
		\optimize{
			\Min_{\substack{\w,\lambda, s_i \\ z_i^+, z_i^-, t_i^+, t_i^-}} & \displaystyle \Obj & \\ 
			\text{s.t.} & \frac{1}{2} (z_i^+ - \widehat y_i \inner{\w}{\widehat \x_i})^2 + t_i^+  \leq s_i & i \in [N] \\
			& \frac{1}{2} (z_i^- + \widehat y_i \inner{\w}{\widehat \x_i})^2 + t_i^-  - \kappa \lambda \leq s_i & i \in [N] \\
			& 1 - z_i^+ \leq t_i^+,\quad 1 - z_i^- \leq t_i^- & i \in [N] \\ 
			& t_i^+, t_i^- \geq 0 & i \in [N] \\
			& \|\w\|_* \leq \lambda.}
		\end{align}	
	\end{Crl}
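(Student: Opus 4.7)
The strategy is to invoke Theorem~\ref{Thrm:Classification}(ii) directly, since $\XX = \RR^n$ and the smooth hinge loss $L$ is Lipschitz continuous, and then to rewrite the two pointwise inequalities $L(\widehat y_i \inner{\w}{\widehat \x_i}) \le s_i$ and $L(-\widehat y_i \inner{\w}{\widehat \x_i}) - \kappa \lambda \le s_i$ via an epigraphical reformulation that is convex quadratic in auxiliary variables.

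First I would verify that the smooth hinge loss has Lipschitz modulus $\lip = 1$: inspecting its piecewise definition shows $L'(z) = -1$ for $z \le 0$, $L'(z) = z - 1 \in (-1,0)$ for $0 < z < 1$, and $L'(z) = 0$ for $z \ge 1$, so $|L'| \le 1$ everywhere with equality on $(-\infty,0]$. Consequently, the constraint $\lip \|\w\|_* \le \lambda$ in \eqref{tractable:classification:convex} reduces to $\|\w\|_* \le \lambda$, which is exactly the last constraint appearing in \eqref{MHLM}.

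The key technical step is the following identity: for every $u \in \RR$,
\begin{align*}
L(u) \;=\; \min\bigl\{ \tfrac{1}{2}(z-u)^2 + t : \; z \in \RR,\; t \ge 0,\; 1 - z \le t \bigr\}.
\end{align*}
I would establish this by splitting into the three regimes of $u$ and solving the inner minimization in closed form. Concretely: if $z \ge 1$, the constraint forces $t = 0$ and the inner problem reduces to $\tfrac12 (z-u)^2$, minimized at $z = \max\{1,u\}$; if $z < 1$, then $t = 1 - z$ at optimum and the objective becomes $\tfrac12 (z-u)^2 + 1 - z$, minimized at $z = u + 1$ (feasible iff $u < 0$). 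Comparing the two branches reproduces $L(u) = 0$ for $u \ge 1$, $L(u) = \tfrac12 (1-u)^2$ for $0 < u < 1$, and $L(u) = \tfrac12 - u$ for $u \le 0$. This identity immediately yields the epigraphical equivalence
\begin{align*}
L(u) \le s \quad \Longleftrightarrow \quad \exists\, z,t:\; \tfrac12 (z-u)^2 + t \le s,\; 1 - z \le t,\; t \ge 0,
\end{align*}
since the minimization is always attained.

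Applying this equivalence with $u = \widehat y_i \inner{\w}{\widehat \x_i}$ (introducing $z_i^+, t_i^+$) to the first constraint of \eqref{tractable:classification:convex}, and with $u = -\widehat y_i \inner{\w}{\widehat \x_i}$ against right-hand side $s_i + \kappa \lambda$ (introducing $z_i^-, t_i^-$) to the second constraint, produces exactly the quadratic constraints in \eqref{MHLM}. Combined with the reformulated norm constraint $\|\w\|_* \le \lambda$, this gives the claimed reformulation. The main obstacle is the case analysis in verifying the epigraphical identity, since one must ensure the right branch of the inner minimization matches the right branch of $L$; all remaining manipulations are mechanical substitutions into Theorem~\ref{Thrm:Classification}(ii).
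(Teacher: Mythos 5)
Your proof is correct and follows essentially the same route as the paper: the paper observes that the smooth hinge loss is the inf-convolution of $\tfrac12 z^2$ and $\max\{0,1-z\}$, i.e.\ $L(z)=\min_{z'}\tfrac12(z'-z)^2+\max\{0,1-z'\}$, notes $\lip=1$, and plugs into Theorem~\ref{Thrm:Classification}(ii); your epigraphical identity $L(u)=\min\{\tfrac12(z-u)^2+t: t\ge 0,\ 1-z\le t\}$ is exactly this inf-convolution with the $\max$ replaced by the auxiliary variable $t$. The only difference is cosmetic: you verify the identity by an explicit case analysis, whereas the paper cites the inf-convolution structure without expanding it.
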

	
	\begin{Crl} [Logistic regression]
		\label{Crl:lr}
		If $L$ represents the logloss function and $\XX=\RR^n$, then \eqref{distrob:wass} is equivalent~to
		\begin{align} \label{LogisticRegression}
		\optimize{
			\Min_{\w,\lambda,s_i} & \displaystyle \Obj & \\ [0.5em]
			\text{s.t.} & \log \Big( 1 + \exp \big( - \widehat y_i \inner{\w}{\widehat \x_i}  \big) \Big) \leq s_i & i \in [N] \\ [0.5em]
			& \log \Big( 1 + \exp \big( \widehat y_i \inner{\w}{\widehat \x_i} \big) \Big) - \kappa \lambda \leq s_i & i \in [N] \\ [0.5em]
			& \|\w\|_* \leq \lambda.}
		\end{align}	
	\end{Crl}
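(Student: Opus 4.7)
The proof is a direct application of Theorem~\ref{Thrm:Classification}(ii) since the assumptions of Corollary~\ref{Crl:lr} fit precisely into its framework: we are given $\XX = \RR^n$ and a univariate loss function $L$, so we only need to verify that logloss is Lipschitz continuous and then substitute it (together with its Lipschitz modulus) into the generic reformulation~\eqref{tractable:classification:convex}.

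First I would compute the Lipschitz modulus of $L(z) = \log(1+e^{-z})$. Since $L$ is differentiable with $L'(z) = -e^{-z}/(1+e^{-z}) = -1/(1+e^{z})$, its derivative lies in $(-1, 0)$ for every $z \in \RR$, approaches $-1$ as $z \to -\infty$, and approaches $0$ as $z \to +\infty$. Therefore $\sup_z |L'(z)| = 1$ and the mean value theorem gives $\mathrm{lip}(L) = 1$.

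With this in hand, I would instantiate Theorem~\ref{Thrm:Classification}(ii) with $L(z) = \log(1+e^{-z})$. The two epigraph-type inequalities become
\begin{align*}
\log\bigl(1 + \exp(-\widehat y_i \inner{\w}{\widehat \x_i})\bigr) &\leq s_i, \\
\log\bigl(1 + \exp(\widehat y_i \inner{\w}{\widehat \x_i})\bigr) - \kappa \lambda &\leq s_i,
\end{align*}
for every $i \in [N]$, where I used $\widehat y_i \in \{+1,-1\}$ so that $L(\widehat y_i \inner{\w}{\widehat \x_i})$ and $L(-\widehat y_i \inner{\w}{\widehat \x_i})$ correspond exactly to the two logarithmic terms displayed in~\eqref{LogisticRegression}. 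The Lipschitz constraint $\mathrm{lip}(L)\|\w\|_* \leq \lambda$ reduces to $\|\w\|_* \leq \lambda$ because $\mathrm{lip}(L) = 1$. Collecting these, together with the common objective $\lambda \eps + \frac{1}{N}\sum_{i=1}^N s_i$, yields precisely the formulation~\eqref{LogisticRegression}.

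There is essentially no obstacle here beyond bookkeeping: the substantial work has already been carried out in Theorem~\ref{Thrm:Classification}. The only point that might warrant attention is ensuring that logloss indeed qualifies as a legitimate convex, Lipschitz loss function on all of $\RR$ so that part~(ii) of the theorem applies rather than part~(i); convexity follows because $L''(z) = e^{-z}/(1+e^{-z})^2 > 0$, and Lipschitz continuity was verified above.
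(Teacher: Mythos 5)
Your proof is correct and follows the same approach as the paper: the paper's proof of Corollary~\ref{Crl:lr} simply notes that the logloss is convex with Lipschitz modulus~1 and invokes Theorem~\ref{Thrm:Classification}(ii). Your additional verification of the Lipschitz modulus via the derivative $L'(z) = -1/(1+e^z)$ and of convexity via $L'' > 0$ just makes explicit the bookkeeping the paper leaves implicit.
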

	
	\begin{Rmk} [Relation to classical regularization]
		\label{rem:drclassification->regularization}
		If $\XX= \RR^{n}$ and the weight parameter $\kappa$ in the transportation metric~\eqref{metric} is set to infinity, then the learning problems portrayed in Corollaries~\ref{Crl:svm}--\ref{Crl:lr} all simplify~to 
		\begin{align} \label{reg_cla}
		\Inf_{\w} ~  \frac{1}{N} \SumN L(\widehat{y}_i \inner{\w}{\widehat{\x}_i}) + \eps \| \w \|_*.
		\end{align}	
		Thus, in analogy to the case of regression, \eqref{reg_cla} reduces to an instance of the classical regularized learning problem~\eqref{in-sample-regularized}, where the dual norm term $\| \w \|_*$ plays the role of the regularization function, while the Wasserstein radius $\eps$ represents the usual regularization weight. Note that if $\kappa=\infty$, then mass transportation along the output space is infinitely expensive, that is, any distribution $\QQ\in \Wball$ can smear out the training samples along $\XX$, but it cannot flip outputs from $+1$ to $-1$ or vice versa. Thus, classical regularization schemes, which are recovered for $\kappa = \infty$, implicitly assume that output measurements are exact. As this belief is not tenable in most applications, an approach with $\kappa<\infty$ may be more satisfying. \change{We remark that alternative approaches for learning with noisy labels have previously been studied by \citet{lawrence2001estimating}, \citet{natarajan2013learning}, and \citet{yang2012multiple}.}
	\end{Rmk}

	\begin{Rmk} [Relation to Tikhonov regularization]
		The learning problem 
		\begin{align} \label{standardSVM}
		\Inf_{\w}  \frac{1}{N} \Sum_{i=1}^{N}  L(\widehat{y}_i \inner{\w}{\widehat{\x}_i}) + c \| \w \|_2^2
		\end{align}
		with Tikhonov regularizer enjoys wide popularity. If $L$ represents the hinge loss, for example, then \eqref{standardSVM} reduces to the celebrated soft margin support vector machine problem. However, the Tikhonov regularizer appearing in~\eqref{standardSVM} is not explained by a distributionally robust learning problem of the form~\eqref{distrob:wass}. It is known, however, that \eqref{reg_cla} with $\|\cdot\|_*=\|\cdot\|_2$ and \eqref{standardSVM} are equivalent in the sense that for every $\eps\geq0$ there exists $c\geq 0$ such that the solution of \eqref{reg_cla} also solves \eqref{standardSVM} and vice versa~\citep[Corollary 6]{xu2009robustness}. 
	\end{Rmk}
	
	To establish a connection between distributionally robust and classical robust classification as discussed in \citep{xu2009robustness}, we further investigate the worst-case expected loss of a fixed linear hypothesis $\w$.
	\begin{equation}
	\label{wc-expectation-classification}
	\sup_ {\QQ \in \Wball} \EE ^ \QQ [ L(y\inner{\w}{\x}) ]
	\end{equation}

	\begin{Thrm} [Extremal distributions in linear classification]
		\label{Thrm:worst-case:classification}
		The following statements hold. 
		\begin{enumerate}
			\item[(i)] If $L(z)= \max_{j\in J} \{ a_j z + b_j \}$, then the worst-case expectation~\eqref{wc-expectation-classification} coincides with
			\begin{align} \label{worst:classification:linear}
			\optimize{
				\Sup_{ \substack{\alpha^+_{ij}, \alpha^-_{ij} \\ \q^+_{ij}, \q^-_{ij}} } & \displaystyle \frac{1}{N} \sum_{i = 1}^{N}\sum_{j = 1}^{J} (\alpha^+_{ij}-\alpha^-_{ij}) a_j \widehat y_i \inner{\w}{\widehat \x_i} + a_j \widehat y_i \inner{\w}{\q^+_{ij} - \q^-_{ij}} +\sum_{j=1}^J b_j & \\		
				\mathrm{s.t.} & \displaystyle \sum_{i = 1}^{N}\sum_{j = 1}^{J} \| \q^+_{ij} \| + \| \q^-_{ij} \| + \kappa \alpha^-_{ij} \leq N \eps &  \\
				& \displaystyle \sum_{j=1}^J \alpha^+_{ij} + \alpha^-_{ij} = 1 & \hspace{-3cm}  i \in [N] \\
				& \widehat{\x}_i + \q^+_{ij}/\alpha^+_{ij} \in \XX,\quad  \widehat{\x}_i + \q^-_{ij}/\alpha^-_{ij} \in \XX & \hspace{-3cm} i \in [N], j \in [J] \\
				& \alpha^+_{ij}, \alpha^-_{ij} \geq 0 & \hspace{-3cm} i \in [N], j \in [J]}
			\end{align}
			for any fixed $\w$. Also, if $({\alpha_{ij}^{+}}\opt, {\alpha_{ij}^{-}}\opt, {\q_{ij}^{+}}\opt, {\q_{ij}^{-}}\opt)$ maximizes~\eqref{worst:classification:linear}, then the discrete distribution
			\begin{align*}
			\QQ = \frac{1}{N} \sum_{i = 1}^{N}\sum_{j = 1}^{J} {\alpha^{+}_{ij}}\opt \,\delta_{(\widehat{\x}_i - {\q^{+}_{ij}}\opt / {\alpha^{+}_{ij}}\opt,\;  \widehat y_i)} + {\alpha^{-}_{ij}}\opt \, \delta_{(\widehat{\x}_i - {\q^{-}_{ij}}\opt / {\alpha^{-}_{ij}}\opt,\; - \widehat y_i)}
			\end{align*}
			represents a maximizer for \eqref{wc-expectation-classification}.
			\item[(ii)] If $\XX = \RR^{n}$, then the worst-case expectation~\eqref{wc-expectation-classification} coincides with the optimal value of 
			\begin{align} \label{worst:classification:convex}
			\optimize{
				\Sup_{\alpha_i, \theta} & \displaystyle \lip \|(\w) \|_* \theta + \frac{1}{N} \Sum_{i=1}^N (1 - \alpha_i) L(\widehat y_i\inner{\w}{\widehat x_i}) + \alpha_i L(-\widehat y_i\inner{\w}{\widehat x_i}) \\
				\text{s.t.} & \displaystyle \theta +  \frac{\kappa}{N} \SumN \alpha_i = \eps - \gamma & \\
				& 0 \leq \alpha_i \leq 1 \qquad\qquad\qquad\quad i \in [N] \\
				& \theta \geq 0 &}
			\end{align}
			for $\gamma = 0$. Moreover, if $(\alpha_i\opt(\gamma), \theta\opt(\gamma))$ maximizes~\eqref{worst:classification:convex} for $\gamma > 0$, $\eta(\gamma)=\gamma/(\theta\opt(\gamma)+\kappa-\eps+\gamma+1)$ and $\x\opt$ solves $\max_{\x} \{\inner{\w}{\x}: \| \x \| \leq 1\}$, then the discrete distributions
			\begin{align*}
			\QQ_\gamma 
			=& \displaystyle \frac{1}{N} \sum_{i = 2}^{N} (1-\alpha_{i}\opt(\gamma)) \, \delta_{(\widehat \x_{i}, \; \widehat y_i)} + \alpha_{i}\opt(\gamma) \, \delta_{(\widehat \x_{i}, \;- \widehat y_i)} + \frac{1 - \eta(\gamma)}{N} \Big[ (1-\alpha_{1}\opt(\gamma)) \, \delta_{(\widehat \x_{1}, \; \widehat y_1)} + \alpha_{1}\opt(\gamma) \, \delta_{(\widehat \x_{1}, \;- \widehat y_1)} \Big] \\ 
			& \displaystyle + \frac{\eta(\gamma)}{N} \,\delta_{(\widehat \x_1 + \frac{\theta\opt(\gamma) N}{\eta(\gamma)} \x\opt, \;\widehat y_1) } 
			\end{align*}
			for $\gamma\in[0,\min\{\eps,1\}]$ are feasible and asymptotically optimal in \eqref{wc-expectation-classification} for $\gamma\downarrow 0$.
		\end{enumerate}	
	\end{Thrm}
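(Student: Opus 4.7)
The plan is to start from Kantorovich's formulation of the Wasserstein distance: any $\QQ\in\Wball$ is the second marginal of some transport plan $\Pi$ on $\Xi\times\Xi$ whose first marginal equals $\Pem$ and whose cost $\int d\,\diff\Pi$ is at most $\eps$. Disintegrating $\Pi$ along its first marginal yields conditional distributions $\QQ_i$ on $\Xi$ attached to each training atom $(\widehat{\x}_i,\widehat{y}_i)$, and $\QQ=\frac{1}{N}\sum_{i=1}^N\QQ_i$. Because $\YY=\{+1,-1\}$, each $\QQ_i$ splits canonically as $\QQ_i^+ + \QQ_i^-$, where $\QQ_i^\pm$ is a subprobability measure on $\XX\times\{\pm\widehat{y}_i\}$ with total masses $\alpha_i^\pm\geq 0$ summing to one. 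The worst-case expectation~\eqref{wc-expectation-classification} therefore reduces to a linear program over positive measures $(\QQ_i^+,\QQ_i^-)_{i\in[N]}$ with linear objective $\frac{1}{N}\sum_i \int L(\widehat{y}_i\inner{\w}{\x})\,\diff\QQ_i^+(\x) + \int L(-\widehat{y}_i\inner{\w}{\x})\,\diff\QQ_i^-(\x)$ and a single scalar budget $\frac{1}{N}\sum_i \big( \int \|\x-\widehat{\x}_i\|\,\diff\QQ_i^+(\x) + \int \|\x-\widehat{\x}_i\|\,\diff\QQ_i^-(\x) + \kappa\alpha_i^- \big) \leq \eps$.

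For part~(i), with $L(z)=\max_{j\in[J]} \{a_jz+b_j\}$, I would exchange the max with the integral by introducing, for each $(i,j)$ and each sign, a nonnegative mass $\alpha_{ij}^\pm$ (with $\sum_j(\alpha_{ij}^+ + \alpha_{ij}^-)=1$) together with a signed displacement $\q_{ij}^\pm\in\RR^n$, and then parameterize the atoms of the optimal $\QQ_i^\pm$ at locations $\widehat{\x}_i - \q_{ij}^\pm/\alpha_{ij}^\pm$. The reduction to a discrete mixture supported on at most $J$ atoms per sign follows from an extreme-point argument for the LP on positive measures with finitely many linear constraints: the feasible set admits extreme points supported on at most $J$ atoms, one per affine piece of $L$. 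The resulting finite-dimensional LP is precisely~\eqref{worst:classification:linear}, and the claimed $\QQ^\star$ is read off directly from the atoms; strong LP duality against the primal reformulation~\eqref{tractable:classification:linear} of Theorem~\ref{Thrm:Classification}(i) matches the optimal values.

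For part~(ii), with $\XX=\RR^n$ and $L$ Lipschitz, no support constraint restrains the inner problem. The tight Lipschitz bound $L(y\inner{\w}{\x}) \leq L(y\inner{\w}{\widehat{\x}_i}) + \lip\,\|\w\|_*\|\x-\widehat{\x}_i\|$, attained in the direction $\x\opt$ that solves $\max\{\inner{\w}{\x}:\|\x\|\leq 1\}$, reduces the inner problem at each sample to choosing a scalar displacement magnitude $\theta_i\geq 0$ along $\x\opt$ and a label-flip fraction $\alpha_i\in[0,1]$. Because the gain per unit of $\theta_i$ is the common constant $\lip\,\|\w\|_*$ at every sample, the entire budget $\theta=\frac{1}{N}\sum_i\theta_i$ can be lumped onto a single sample (WLOG $i=1$), producing~\eqref{worst:classification:convex} at $\gamma=0$. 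The supremum fails to be attained in general because the ideal optimizer would place a vanishing mass at infinity; the remedy is to reserve a small fraction $\eta(\gamma)$ of the mass at sample~1 and place it at finite distance $\theta\opt(\gamma)N/\eta(\gamma)$ along $\x\opt$, with $\eta(\gamma)$ calibrated so that the total transport cost equals $\eps-\gamma$.

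The main technical obstacles are twofold. In part~(i), justifying the reduction from general $\QQ_i^\pm$ to finite $J$-atom mixtures is the crux and is where the piecewise-affine structure is critical; the cleanest route is to invoke an extreme-point or Carath\'eodory-type theorem for LPs over bounded positive measures with a finite number of linear equality and inequality constraints. In part~(ii), the delicate point is verifying, by an explicit Wasserstein-cost computation against the natural transport plan, that $\QQ_\gamma$ is feasible with cost exactly $\eps-\gamma$, and showing continuity of the optimal value of~\eqref{worst:classification:convex} in $\gamma$ so that $\EE^{\QQ_\gamma}[L(y\inner{\w}{\x})]$ converges to the supremum as $\gamma\downarrow 0$; equality with the primal worst-case value then follows from strong duality with~\eqref{tractable:classification:convex} in Theorem~\ref{Thrm:Classification}(ii).
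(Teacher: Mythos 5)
Your proof sketch diverges from the paper most significantly in part~(i), and the divergence contains a gap. You propose to establish the discrete parameterization in~\eqref{worst:classification:linear} via an extreme-point argument for the LP over positive measures, asserting that extreme points are ``supported on at most $J$ atoms, one per affine piece of $L$.'' This is not how the bound on the number of atoms in an extreme point of a moment LP works: the atom count is controlled by the number of scalar linear constraints, not by the number of affine pieces of the integrand (which lives in the objective, not in the constraints). After disintegrating and splitting by sign, your LP has $N$ mass constraints plus one budget constraint, so its extreme points carry at most $N+1$ atoms total across all $i$ --- a very different structure from ``$J$ atoms per sign per $i$.'' More importantly, when $\XX$ is unbounded the supremum need not be attained, so there may be no extreme-point maximizer at all; the theorem is phrased conditionally (``if $(\alpha^{+\star}_{ij},\ldots)$ maximizes...'') precisely to accommodate non-attainment, and an extreme-point existence argument would have to address this. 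The paper sidesteps all of this: it views~\eqref{wc-expectation-classification} as the restriction of the tractable reformulation~\eqref{tractable:classification:linear} at fixed~$\w$, takes the Lagrangian dual of that finite-dimensional convex program (Slater holds, so strong duality applies), and then invokes Lemma~\ref{Lmm:worst} to turn each dual term into a maximization over a displacement $\q^\pm_{ij}$, which directly yields~\eqref{worst:classification:linear}. You do mention ``strong LP duality against the primal reformulation'' at the end, and if carried through carefully that step alone closes the argument --- making the extreme-point detour both unnecessary and the one part of your sketch that does not stand as written.

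For part~(ii) your approach is essentially the paper's: dualize the reformulation of Theorem~\ref{Thrm:Classification}(ii), lump the perturbation budget onto a single sample, and repair non-attainment by retaining a small mass $\eta(\gamma)/N$ at finite distance $\theta^\star(\gamma)N/\eta(\gamma)$. Two small imprecisions: the Lipschitz bound $L(y\inner{\w}{\x}) \le L(y\inner{\w}{\widehat{\x}_i}) + \lip\,\|\w\|_*\|\x-\widehat{\x}_i\|$ is generally attained only in the limit of large displacements (and exactly only under the non-separability condition), and the transport cost of $\QQ_\gamma$ is bounded above by $\eps$ but is not ``exactly $\eps-\gamma$''; the paper's explicit computation gives cost $\le \eps - \gamma - \theta^\star(\gamma) + \theta^\star(\gamma)\|\x^\star\| \le \eps$. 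You correctly identify that continuity of the optimal value of~\eqref{worst:classification:convex} in $\gamma$ is the final ingredient; the paper obtains the lower bound on $\EE^{\QQ_\gamma}[L(y\inner{\w}{\x})]$ via Fenchel's inequality and then appeals to the concavity and monotonicity of the value function in $\gamma$.
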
	
	
	Theorem~\ref{Thrm:worst-case:classification} shows how one can use convex optimization to construct a sequence of distributions that are asymptotically optimal in~\eqref{wc-expectation-classification}. Next, we show that the worst-case expected cost~\eqref{wc-expectation-classification} is equivalent to a (robust) worst-case cost over a suitably defined uncertainty set if the following assumption holds.
	\begin{Asmp}[Non-separability]
		\label{Asmp:sep:class}	
		For every $\w \in \RR^n$ there is a training sample $(\widehat \x_k, \widehat y_k)$ for some $k\leq N$ such that the derivative $L'$ exists at $\widehat y_k \inner{\w}{\widehat \x_k}$ and satisfies $|L'(\widehat y_k \inner{\w}{\widehat \x_k})| = \lip$.
	\end{Asmp}
	
	\begin{Rmk} [Non-separability]
		Assumption~\ref{Asmp:sep:class} generalizes the non-separability condition in \citep[Theorem~3]{xu2009robustness} for the classical and smooth hinge loss functions to more general Lipschitz continuous losses. Note that, in the case of the hinge loss, Assumption~\ref{Asmp:sep:class} effectively stipulates that for any $\w$ there exists a training sample $(\widehat \x_k, \widehat y_k)$ with $\widehat y_k \inner{\w}{\widehat \x_k} < 1$, implying that the dataset cannot be perfectly separated by any linear hypothesis $\w$. An equivalent requirement is that the empirical hinge loss is nonzero for every $\w$. Similarly, in the case of the smooth hinge loss, Assumption~\ref{Asmp:sep:class} ensures that for any $\w$ there is a training sample with $\widehat y_k \inner{\w}{\widehat \x_k} < 0$, which implies again that the dataset admits no perfect linear separation. Note, however, that the logloss fails to satisfy Assumption~\ref{Asmp:sep:class} as its steepest slope is attained at infinity.
	\end{Rmk}
	
	\begin{Thrm}[Robust classification] \label{Thrm:RobConnection:class}
		Suppose that $\XX= \RR^{n}$, the loss function $L$ is Lipschitz continuous and the cost of flipping a label in the transportation metric~\eqref{metric} is set to $\kappa=\infty$. Then, the worst-case expected loss~\eqref{wc-expectation-classification} provides an upper bound on the (robust) worst-case loss
		\begin{align}
		\label{wc-classification-loss}
		\optimize{
			\Sup_{\Delta \x_{i}} & \displaystyle \frac{1}{N} \sum_{i=1}^N
			L \left(\widehat y_i\inner{\w}{\widehat \x_i + \Delta \x_{i}} \right)\\
			\mathrm{s.t.} & \displaystyle \frac{1}{N}\sum_{i=1}^N \| \Delta \x_{i} \| \leq  \eps. & }
		\end{align}
		Moreover, if Assumption~\ref{Asmp:sep:class} holds, then \eqref{wc-expectation-classification} and \eqref{wc-classification-loss} are equal.	
	\end{Thrm}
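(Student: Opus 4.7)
My plan is to dispatch the two statements separately, using the inner‑supremum reformulation already embedded in Theorem~\ref{Thrm:Classification}(ii) as the central tool.

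For the upper bound I would show that every feasible perturbation family $\{\Delta\x_i\}_{i=1}^N$ induces a distribution lying inside the Wasserstein ball. Concretely, set $\QQ_\Delta := \frac{1}{N}\sum_{i=1}^N \delta_{(\widehat\x_i+\Delta\x_i,\,\widehat y_i)}$ and consider the coupling~$\Pi$ that pairs the atom $(\widehat\x_i+\Delta\x_i,\widehat y_i)$ of $\QQ_\Delta$ with the atom $(\widehat\x_i,\widehat y_i)$ of~$\Pem$. Since labels coincide across every matched pair, the term $\kappa\mathds{1}_{\{y\neq y'\}}$ in~\eqref{metric} vanishes (even though $\kappa=\infty$), and the transportation cost collapses to $\frac{1}{N}\sum_i\|\Delta\x_i\|\leq\eps$. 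Hence $\QQ_\Delta\in\Wball$, and because $\EE^{\QQ_\Delta}[L(y\inner{\w}{\x})]$ matches the integrand of~\eqref{wc-classification-loss} sample by sample, taking suprema on both sides yields the inequality $\eqref{wc-classification-loss}\leq\eqref{wc-expectation-classification}$.

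For the equality statement under Assumption~\ref{Asmp:sep:class}, I would first read off a closed‑form evaluation of~\eqref{wc-expectation-classification} by specializing Theorem~\ref{Thrm:Classification}(ii) to fixed~$\w$. With $\kappa=\infty$ the constraint $L(-\widehat y_i\inner{\w}{\widehat\x_i})-\kappa\lambda\leq s_i$ becomes vacuous whenever $\lambda>0$, so the inner infimum is attained at $\lambda=\lip\|\w\|_*$ and $s_i=L(\widehat y_i\inner{\w}{\widehat\x_i})$. Writing $a_i:=\widehat y_i\inner{\w}{\widehat\x_i}$, this yields
\begin{equation*}
\sup_{\QQ\in\Wball}\EE^\QQ\!\bigl[L(y\inner{\w}{\x})\bigr] \;=\; \tfrac{1}{N}\sum_{i=1}^N L(a_i)+\eps\,\lip\,\|\w\|_*,
\end{equation*}
so it remains to exhibit a perturbation realizing this value in~\eqref{wc-classification-loss}. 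By Assumption~\ref{Asmp:sep:class} there exists $k$ and $\sigma:=\sgn L'(a_k)\in\{-1,+1\}$ with $L'(a_k)=\sigma\lip$. Convexity and Lipschitz continuity of $L$ force $\partial L\subseteq[-\lip,\lip]$ and $\partial L$ to be nondecreasing, so attaining the boundary slope $\sigma\lip$ at $a_k$ propagates this slope along the ray $\{a_k+\sigma t:t\geq 0\}$, giving $L(a_k+\sigma t)=L(a_k)+\lip t$. Picking $\x^\star$ with $\|\x^\star\|=1$ and $\inner{\w}{\x^\star}=\|\w\|_*$, I set $\Delta\x_k:=\sigma N\eps\,\widehat y_k\,\x^\star$ and $\Delta\x_i:=\bm 0$ for $i\neq k$; the average norm equals $\eps$ (feasibility), the $k$th argument becomes $a_k+\sigma N\eps\|\w\|_*$, and the resulting empirical loss equals $\frac{1}{N}\sum_i L(a_i)+\eps\lip\|\w\|_*$, matching the closed form exactly.

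The most delicate step is the \emph{affinity‑on‑a‑ray} property: attaining the Lipschitz modulus by $L'$ at a single point must propagate the slope $\pm\lip$ along an entire half‑line. This one‑dimensional fact follows from monotonicity of $\partial L$ combined with the pointwise bound $|\partial L|\leq\lip$, but one should argue via left/right derivatives to cover possible nondifferentiability away from~$a_k$. Beyond this, the trivial case $\w=\bm 0$ (where $\|\w\|_*=0$ and both sides reduce to $L(0)$) must be treated separately to justify the choice of $\x^\star$; every other step is a direct specialization of Theorem~\ref{Thrm:Classification}(ii).
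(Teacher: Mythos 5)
Your proof is correct, but it takes a genuinely different route from the paper's. For the upper bound, you argue directly at the level of distributions: each feasible perturbation family induces a discrete distribution $\QQ_\Delta$ in the Wasserstein ball via the obvious coupling, and the equality of expectations then gives $\eqref{wc-classification-loss}\le\eqref{wc-expectation-classification}$ in two lines. The paper instead proves this inequality by first assuming $L$ is convex piecewise linear, invoking the dual reformulation of Theorem~\ref{Thrm:worst-case:classification}(i), exhibiting a chain of feasible-set inclusions, and then extending to general Lipschitz $L$ by uniform approximation with piecewise linear functions. Your argument is more elementary and handles general convex Lipschitz $L$ directly, sidestepping the approximation step. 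For the equality, both you and the paper perturb a single training sample $k$ certified by Assumption~\ref{Asmp:sep:class}, but you evaluate the worst-case expectation in closed form via Theorem~\ref{Thrm:Classification}(ii) (with $\kappa=\infty$ killing the label-flip constraint) and match it against the robust loss at your explicit perturbation, whereas the paper again works through the piecewise linear surrogate. The one technical point you rightly flag—that an interior point attaining $|L'|=\lip$ forces $L$ to be affine with slope $\pm\lip$ along a half-line, by monotonicity of the (bounded) subdifferential—is sound and is what lets you avoid piecewise linearity. Your degenerate case $\w=\bm 0$ is handled correctly as well, since both sides collapse to $L(0)$. Overall, the probabilistic upper bound and the direct closed-form matching make your proof shorter and somewhat cleaner than the paper's, at the mild cost of not re-using the machinery of Theorem~\ref{Thrm:worst-case:classification}.
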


	\begin{Rmk}[Tractability of robust classification]
		Assume that $\XX=\RR^n$, while $L$ and $\|\cdot\|$ both admit a tractable conic representation. By Theorem~\ref{Thrm:Classification}, the worst-case expected loss~\eqref{wc-expectation-classification} can then be computed in polynomial time by solving a tractable convex program. Theorem~\ref{Thrm:RobConnection:class} thus implies that the worst-case loss~\eqref{wc-classification-loss} can also be computed in polynomial time if Assumption~\ref{Asmp:sep:class} holds. This confirms Proposition~4 in~\citep{xu2009robustness}. No efficient method for computing~\eqref{wc-classification-loss} is know if Assumption~\ref{Asmp:sep:class} fails to hold.
	\end{Rmk}
	
	\subsection{Nonlinear Hypotheses: Reproducing Kernel Hilbert Spaces}
	\label{subsec:Kernelization}
	We now generalize the learning models from Sections~\ref{subsec:drregression} and~\ref{subsec:drclassification} to nonlinear hypotheses that range over a {\em reproducing kernel Hilbert space} (RKHS) $\HH\subseteq \RR^\XX$ with inner product $\inner{\cdot}{\cdot}_\HH$. By definition, $\HH$ thus constitutes a complete metric space with respect to the norm $\|\cdot\|_\HH$ induced by the inner product, and the point evaluation $h\mapsto h(\x)$ of the functions $h\in\HH$ represents a continuous linear functional on $\HH$ for any fixed $\x\in\XX$. The Riesz representation theorem then implies that for every $\x\in\XX$ there exists a unique function $\Phi(\x)\in\HH$ such that $h(\x)=\inner{h}{\Phi(\x)}_\HH$ for all $h\in\HH$. We henceforth refer to $\Phi:\XX\rightarrow \HH$ as the {\em feature map} and to $k:\XX \times \XX \rightarrow \RR_+$ with $k(\x,\x') = \inner{\Phi(\x)}{\Phi(\x')}_\HH$ as the {\em kernel function}. By construction, the kernel function is symmetric and positive definite, that is, the {\em kernel matrix} $\KK \in\RR^{N\times N}$ defined through $\KK_{ij} = k(\x_i,\x_j)$ is positive definite for all $N\in\mathbb N$ and $\{\x_i\}_{i\leq N}\subseteq \XX$.
	
	By the Moore-Aronszajn theorem, any symmetric and positive definite kernel function $k$ on $\XX$ induces a unique RKHS $\HH\subseteq \RR^\XX$, which can be represented as
	\[
	\HH=\left\{h\in \RR^\XX: \exists \beta_i \in \RR,~\x_i \in \XX~\forall i\in\mathbb N \text{ with }h(\x)=\sum_{i=1}^\infty \beta_i k(\x_i,\x) \text{ and } \sum_{i=1}^\infty\sum_{j=1}^\infty \beta_ik(\x_i,\x_j)\beta_j< \infty \right\},
	\]
	and where the inner product of two arbitrary functions $h_1,h_2\in\HH$ with $h_1(\x)= \sum_{i=1}^\infty \beta_i k(\x_i,\x)$ and $h_2(\x)= \sum_{j=1}^\infty \beta'_j k(\x'_j,\x)$ is defined as $\inner{h_1}{h_2}_\HH=\sum_{i=1}^\infty\sum_{j=1}^\infty \beta_ik(\x_i,\x'_j)\beta'_j$. One may now use the kernel function to define the feature map $\Phi$ through $[\Phi(\x')](\x)=k(\x',\x)$ for all $\x,\x'\in\XX$. This choice is admissible because it respects the consistency condition $\inner{\Phi(\x)}{\Phi(\x')}_\HH=k(\x,\x')$ for all $\x,\x'\in\HH$, and because it implies the desired {\em reproducing property} $\inner{f}{\Phi(\x')}_\HH= \sum_{i=1}^\infty \beta_i k(\x_i, \x')=f(\x')$ for all $f\in\HH$ and $\x'\in\XX$.
	
	In summary, given a symmetric and positive definite kernel function~$k$, there exists an associated RKHS $\HH$ and a feature map $\Phi$ with the reproducing property. As we will see below, however, to optimize over nonlinear hypotheses in $\HH$, knowledge of $k$ is sufficient, and there is no need to construct $\HH$ and $\Phi$ explicitly.

	Assume now that we are given any symmetric and positive definite kernel function~$k$, and construct a distributionally robust learning problem over all {\em nonlinear} hypotheses in the corresponding RKHS $\HH$ via
	\begin{align} \label{distrob:wass:hilbert}
	\widehat{J}(\eps) = \Inf_ {h \in \HH} \Sup_{\QQ \in \BB_{\eps} (\Pem)} \EE ^ \QQ \left[ \ell(h(\x),y) \right],
	\end{align}
	where the transportation metric is given by the Euclidean norm on $\XX\times\YY$ (for regression problems) or the separable metric~\eqref{metric} with the Euclidean norm on $\XX$ (for classification problems).
	While problem~\eqref{distrob:wass:hilbert} is hard to solve in general due to the nonlinearity of the hypotheses $h\in\HH$, it is easy to solve a lifted learning problem where the inputs $\x\in \XX$ are replaced with features $\x_\HH\in\HH$, while each {\em nonlinear} hypothesis $h\in\HH$ over the input space $\XX$ is identified with a {\em linear} hypothesis $h_\HH\in\HH$ over the feature space $\HH$ through the identity $h_\HH(\x_\HH)=\inner{h}{\x_\HH}_\HH$. Thus, the lifted learning problem can be represented as
	\begin{align} \label{distrob:wass:ker}
	\widehat{J}_{\HH}(\eps) = \Inf_ {h \in \HH} \Sup_{\QQ \in \BB_{\eps} (\Pem^{\HH})} \EE ^ \QQ \left[ \ell(\inner{h}{\bm x_\HH}_{\HH},y) \right],
	\end{align}
	where $\Pem^{\HH} = 1/N \sum_{i=1}^N \delta_{(\Phi(\widehat{\x}_i),\widehat{y}_i)}$ on $\HH\times\YY$ denotes the pushforward measure of the emprical distribution~$\Pem$ under the feature map~$\Phi$ induced by~$k$, while $\BB_{\eps} (\Pem^{\HH})$ constitutes the Wasserstein ball of radius $\eps$ around $\Pem^{\HH}$ corresponding to the transportation metric 
	\[
	d_\HH \left( (\x_\HH, y), (\x_\HH', y')\right) = \left\{ \begin{array}{ll} \sqrt{\| \x_\HH - \x_\HH' \|^2_\HH + (y - y')^2} & \text{for regression problems,}\\
	\quad \| \x_\HH - \x_\HH' \|_\HH + \kappa \mathds{1}_{ \{ y \neq y' \}  } & \text{for classification problems.} \end{array}\right.
	\]
	Even though $\Pem^{\HH}$ constitutes the pushforward measure of $\Pem$ under $\Phi$, not every distribution $\QQ^\HH\in \BB_{\eps} (\Pem^{\HH})$ can be obtained as the pushforward measure of some $\QQ\in\BB_{\eps} (\Pem)$. Thus, we should not expect \eqref{distrob:wass:hilbert} to be equivalent to \eqref{distrob:wass:ker}. Instead, one can show that under a judicious transformation of the Wasserstein radius, \eqref{distrob:wass:ker} provides an upper bound on~\eqref{distrob:wass:hilbert} whenever the kernel function satisfies a calmness condition.

	\begin{Asmp} [Calmness of the kernel]
		\label{ass:calmness}
		The kernel function $k$ is calm from above, that is, there exist a concave smooth growth function $g: \RR_+ \rightarrow \RR_+$ with $g(0)=0$ and $g'(z)\geq 1$ for all $z\in\RR_+$ such that
		\begin{align*}
		\sqrt{k(\x_1, \x_1) - 2 k(\x_1, \x_2) + k(\x_2, \x_2)} \leq g(\| \x_1 - \x_2 \|_2)\quad \forall \x_1,\x_2\in\XX.
		\end{align*}
	\end{Asmp}
	
	The calmness condition is non-restrictive. In fact, it is satisfied by most commonly used kernels.
	
	\begin{Exp} [Growth Functions for Popular Kernels]
	\label{ex:kernels}
		For most commonly used kernels $k$ on $\XX\subseteq\RR^n$, we can construct an explicit growth function $g$ that certifies the calmness of $k$ in the sense of Assumption~\ref{ass:calmness}. This construction typically relies on elementary estimates. Derivations are omitted for brevity.
		\begin{enumerate} 
			\item \textbf{Linear kernel:} For $k(\x_1, \x_2) = \inner{\x_1}{\x_2}$, we may set $g(z)=z$.
			\item \textbf{Gaussian kernel:} For $k(\x_1, \x_2) = e^{-\gamma \| \x_1 - \x_2 \|_2^2}$ with $\gamma>0$, we may set $g(z) = \max \{ \sqrt{2 \gamma}, 1 \} z$.
			\item \textbf{Laplacian kernel:} For $k(\x_1, \x_2) = e^{-\gamma \| \x_1 - \x_2 \|_1}$ with $\gamma>0$, we may set $g(z) = \sqrt{2 \gamma z \sqrt{n}}$ if $ 0 \leq z \leq \gamma \sqrt{n}/2$ and $g(z)= z + \gamma \sqrt{n}/2$ otherwise. 
			\item \textbf{Polynomial kernel:}
			The kernel $k(\x_1, \x_2) = (\gamma \inner{\x_1}{\x_2} + 1)^d$ with $\gamma>0$ and $d\in\mathbb N$ fails to satisfy the calmness condition if $\XX$ is unbounded and $d>1$, in which case $\sqrt{k(\x_1, \x_1) - 2 k(\x_1, \x_2) + k(\x_2, \x_2)}$ grows superlinearly. If $\XX\subseteq \{\x\in\RR^n:\| \x \|_2 \leq R\}$ for some $R>0$, however, the polynomial kernel is calm with respect to the growth function
			\[
			g(z) = \left\{ \begin{array}{ll} \max\{ \frac{1}{2R}\sqrt{2(\gamma R^2 + 1)^d}, 1 \} z & \text{if $d$ is even,}\\[0.5ex]
			\max\{ \frac{1}{2R}\sqrt{2(\gamma R^2 + 1)^d - 2 (1 - \gamma R^2)^d}, 1 \} z & \text{if $d$ is odd.}
			\end{array}\right.
			\]
		\end{enumerate}
	\end{Exp}
	
	\begin{Thrm} [Lifted learning problems]
		\label{Thrm:relation}
		If Assumption~\ref{ass:calmness} holds for some growth function $g$, then the following statements hold for all Wasserstein radii $\eps\geq 0$.
		\begin{itemize}
			\item[(i)] For regression problems we have $\widehat J (\eps) \leq \widehat J_\HH (\sqrt{2}g(\eps))$.
			\item[(ii)] For classification problems we have $\widehat J (\eps) \leq \widehat J_\HH (g(\eps))$.
		\end{itemize}
	\end{Thrm}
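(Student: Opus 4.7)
The plan is to build an explicit correspondence between distributions in the input-space Wasserstein ball and distributions in the feature-space Wasserstein ball so that the two worst-case expectations in \eqref{distrob:wass:hilbert} and \eqref{distrob:wass:ker} can be compared directly. The first observation is that by the reproducing property $h(\x)=\inner{h}{\Phi(\x)}_\HH$, for any $h\in\HH$ and any distribution $\QQ$ on $\Xi$ one has $\EE^\QQ[\ell(h(\x),y)]=\EE^{\Phi_*\QQ}[\ell(\inner{h}{\x_\HH}_\HH,y)]$, where $\Phi_*\QQ$ denotes the pushforward of $\QQ$ under the map $(\x,y)\mapsto(\Phi(\x),y)$; in particular $\Phi_*\Pem=\Pem^{\HH}$. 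It therefore suffices to prove the inclusion $\{\Phi_*\QQ: \QQ\in\BB_{\eps}(\Pem)\}\subseteq \BB_{\eps'}(\Pem^{\HH})$ with $\eps'=\sqrt{2}\,g(\eps)$ for regression and $\eps'=g(\eps)$ for classification, since then passing to the supremum over the larger ball on the right and then to the infimum over $h\in\HH$ delivers the claimed inequalities.

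To establish the inclusion, fix $\QQ\in\BB_{\eps}(\Pem)$ and let $\Pi$ be an (almost) optimal coupling between $\QQ$ and $\Pem$. Pushing $\Pi$ forward through $((\x,y),(\x',y'))\mapsto((\Phi(\x),y),(\Phi(\x'),y'))$ yields a valid coupling between $\Phi_*\QQ$ and $\Pem^{\HH}$, so
\begin{align*}
W(\Phi_*\QQ,\Pem^{\HH}) \;\le\; \int d_\HH\bigl((\Phi(\x),y),(\Phi(\x'),y')\bigr)\,\Pi(\diff \x,\diff y,\diff \x',\diff y').
\end{align*}
The remaining task is a pointwise bound of the integrand by a function of $d((\x,y),(\x',y'))$, followed by moving that function outside the integral via Jensen's inequality.

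For the regression case, Assumption~\ref{ass:calmness} gives $\|\Phi(\x)-\Phi(\x')\|_\HH\le g(\|\x-\x'\|_2)$, while monotonicity of $g$ (implied by $g'\ge 1$) combined with the elementary bound $g(z)\ge z$ (which follows from $g(0)=0$ and $g'\ge 1$) yields both $g(\|\x-\x'\|_2)\le g(d)$ and $|y-y'|\le d\le g(d)$; squaring, summing and taking square roots yields $d_\HH\le\sqrt{2}\,g(d)$ pointwise. For the classification case I would split cases: if $y=y'$, calmness alone gives $d_\HH\le g(d)$, and if $y\neq y'$, the mean value theorem applied to $g$ with $g'\ge 1$ yields $g(\|\x-\x'\|_2+\kappa)-g(\|\x-\x'\|_2)\ge\kappa$, so $\|\Phi(\x)-\Phi(\x')\|_\HH+\kappa\le g(\|\x-\x'\|_2)+\kappa\le g(\|\x-\x'\|_2+\kappa)=g(d)$.

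Integrating the pointwise bounds against $\Pi$ and invoking concavity of $g$ through Jensen's inequality gives $\int g(d)\,\diff\Pi \le g\bigl(\int d\,\diff\Pi\bigr)\le g(\eps)$, so $W(\Phi_*\QQ,\Pem^{\HH})\le\sqrt{2}\,g(\eps)$ in the regression case and $W(\Phi_*\QQ,\Pem^{\HH})\le g(\eps)$ in the classification case, which establishes the desired inclusions and thereby the theorem. The main delicacy is to verify that each of the three structural properties of $g$ in Assumption~\ref{ass:calmness} is used where it is actually needed---concavity for Jensen, and the boundary condition $g(0)=0$ combined with $g'\ge 1$ for the pointwise embedding of the output coordinate in regression and for absorbing the label-flipping cost $\kappa$ in classification.
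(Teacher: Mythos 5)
Your proof is correct, and it captures all the same essential ingredients as the paper's proof (the reproducing property $h(\x)=\inner{h}{\Phi(\x)}_\HH$, the calmness estimate, the growth condition $g'\ge 1$, concavity via Jensen, and implicit relaxation of the support constraint on $\x_\HH$), but you organize them differently. The paper works inside the dualized generalized-moment reformulation of the worst-case expectation and massages the distance constraint $\frac{1}{N}\sum_i\int d\,\diff\QQ^i\le\eps$ into a constraint on $\frac{1}{N}\sum_i\int d_\HH\,\diff\QQ^i$ via the pointwise estimate $d_\HH \le g(\sqrt{2}\,d)$ and Jensen's inequality. You instead isolate a clean stand-alone lemma: the pushforward under $(\x,y)\mapsto(\Phi(\x),y)$ maps the input-space Wasserstein ball $\BB_{\eps}(\Pem)$ into the feature-space ball $\BB_{\eps'}(\Pem^\HH)$ with $\eps'=\sqrt{2}\,g(\eps)$ (regression) or $\eps'=g(\eps)$ (classification), verified via a coupling-pushforward argument combined with the pointwise bound $d_\HH\le\sqrt{2}\,g(d)$ (resp.\ $d_\HH\le g(d)$) and Jensen. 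The reduction from the ball inclusion to the theorem is then a one-liner using the reproducing property and monotonicity of suprema. Your route is more modular and, for the classification case, your explicit split on $y=y'$ versus $y\neq y'$ is cleaner than the paper's somewhat telescoped estimate. One small observation: the paper's pointwise bound actually yields the slightly tighter feature-space radius $g(\sqrt{2}\eps)$ (regression case), which by concavity and $g(0)=0$ satisfies $g(\sqrt{2}\eps)\le\sqrt{2}g(\eps)$ and hence implies the stated bound, whereas your argument hits the stated radius $\sqrt{2}g(\eps)$ directly. Both are valid proofs of the theorem as stated.
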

	
	We now argue that the lifted learning problem~\eqref{distrob:wass:ker} can be solved efficiently by leveraging the following representer theorem, which generalizes \citep[Theorem~4.2]{scholkopf2001learning} to non-separable loss functions. 
	
	\begin{Thrm} [Representer theorem] \label{Thrm:rep}
		Assume that we are given a symmetric positive definite kernel~$k$ on $ \XX$ with corresponding RKHS $\HH$, a set of training samples $(\widehat{\x}_i,\widehat{y}_i)\in\XX\times\YY$, $i\leq N$, and an arbitrary loss function $f: (\XX \times \YY \times \RR)^N \times \RR_+ \rightarrow \RR$ that is non-decreasing in its last argument. Then, there exist $\beta_i\in\RR$, $i\leq N$, such that the learning problem
		\begin{align} \label{rep_th}
		\min_{h\in\HH} ~ f( (\widehat \x_1,\widehat y_1, h(\widehat \x_1)), \ldots, (\widehat \x_N,\widehat y_N, h(\widehat \x_N)), \| h \|_{\HH})
		\end{align}
		is solved by a hypothesis $h^\star\in\HH$ representable as $h^\star(\x) = \sum_{i=1}^N \beta_i k(\x, \widehat \x_i)$.  
	\end{Thrm}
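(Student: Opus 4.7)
The plan is to establish this via the classical orthogonal decomposition argument, adapted to a non-separable loss. I would first introduce the finite-dimensional subspace $V := \mathrm{span}\{k(\cdot,\widehat\x_1),\dots,k(\cdot,\widehat\x_N)\} = \mathrm{span}\{\Phi(\widehat\x_1),\dots,\Phi(\widehat\x_N)\}\subseteq\HH$. Since $V$ is closed (being finite-dimensional), every $h\in\HH$ admits a unique orthogonal decomposition $h = h_\parallel + h_\perp$ with $h_\parallel\in V$ and $h_\perp\in V^\perp$, and $h_\parallel$ takes the representer form $h_\parallel(\x)=\sum_{i=1}^N\beta_i k(\x,\widehat\x_i)$ for appropriate coefficients $\beta_i$.

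Next I would exploit the reproducing property to show that the evaluations of $h$ at the training inputs depend only on $h_\parallel$. Concretely, for every $j\in[N]$,
\begin{equation*}
h(\widehat\x_j) \;=\; \inner{h}{\Phi(\widehat\x_j)}_\HH \;=\; \inner{h_\parallel}{\Phi(\widehat\x_j)}_\HH + \inner{h_\perp}{\Phi(\widehat\x_j)}_\HH \;=\; h_\parallel(\widehat\x_j),
\end{equation*}
because $\Phi(\widehat\x_j)\in V$ forces the second inner product to vanish. Therefore the first $N$ arguments fed into $f$ in~\eqref{rep_th} coincide for $h$ and for $h_\parallel$; only the final argument $\|h\|_\HH$ may change when we replace $h$ by $h_\parallel$.

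For that last argument I would invoke the Pythagorean identity $\|h\|_\HH^2 = \|h_\parallel\|_\HH^2 + \|h_\perp\|_\HH^2$, which yields $\|h_\parallel\|_\HH \leq \|h\|_\HH$, with equality precisely when $h_\perp=0$. Since $f$ is non-decreasing in its last argument, this entails
\begin{equation*}
f\bigl((\widehat\x_1,\widehat y_1,h_\parallel(\widehat\x_1)),\dots,(\widehat\x_N,\widehat y_N,h_\parallel(\widehat\x_N)),\|h_\parallel\|_\HH\bigr) \;\leq\; f\bigl((\widehat\x_1,\widehat y_1,h(\widehat\x_1)),\dots,(\widehat\x_N,\widehat y_N,h(\widehat\x_N)),\|h\|_\HH\bigr).
\end{equation*}
In particular, the infimum of~\eqref{rep_th} over $\HH$ coincides with its infimum over the finite-dimensional subspace $V$. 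Given any minimizer $h^\star$ of~\eqref{rep_th}, its orthogonal projection onto $V$ is itself a minimizer and admits the required representation $h^\star(\x)=\sum_{i=1}^N\beta_i k(\x,\widehat\x_i)$.

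The argument is essentially routine; the only conceptual point where care is needed — and where prior representer theorems that assume an additively separable in-sample loss do not immediately apply — is the observation that non-separability of $f$ is harmless, because what we actually need is the pointwise identity $h(\widehat\x_j)=h_\parallel(\widehat\x_j)$ for every $j$ together with monotonicity in the norm slot, both of which are preserved verbatim. Thus the principal obstacle is merely verifying that the hypotheses of the theorem (symmetry and positive-definiteness of~$k$, monotonicity of~$f$ in its last argument) suffice to rule out any interaction between the training-point evaluations and the regularization term, which the decomposition argument accomplishes directly.
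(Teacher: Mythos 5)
Your argument is correct and is exactly the standard orthogonal-decomposition proof underlying the result the paper cites: the paper's proof simply invokes Theorem~4.2 of Schölkopf, Herbrich, and Smola (2001) and remarks that additive separability of the loss is not required, while you write out the decomposition $h=h_\parallel+h_\perp$, the vanishing of $h_\perp$ at the training points, and the Pythagorean monotonicity step in full. The only implicit assumption — present in both your write-up and the paper's — is that a minimizer of~\eqref{rep_th} exists, after which its projection onto $V$ furnishes the claimed representer solution.
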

	
	The subsequent results involve the Kernel matrix $\KK = [\KK_{ij}]$ defined through $\KK_{ij} = k(\widehat \x_i,\widehat \x_j)$, $i,j\leq N$. The following theorems demonstrate that the lifted learning problem~\eqref{distrob:wass:ker} admits a kernel representation.
	
	\begin{Thrm} [Kernelized distributionally robust regression]
		\label{Thrm:Ker_Reg}
		Suppose that $\XX=\RR^n$, $\YY=\RR$ and $k$ is a symmetric positive definite kernel on $\XX$ with associated RKHS $\HH$. If $\ell$ is generated by a convex and Lipschitz continuous loss function~$L$, that is, $\ell(h(\x),y)=L(h(\x)-y)$, then~\eqref{distrob:wass:ker} is equivalent~to
		$$ \min_{\bm \beta \in \RR^N} \frac{1}{N} \SumN L \Big(\Sum_{j=1}^N \KK_{ij}\beta_j - \widehat y_i \Big) + \eps \lip \| (\KK^{\frac{1}{2}} \bm \beta,1) \|_{2},$$
		and for any of its minimizers $\bm \beta\opt$ the hypothesis $h\opt(\x) = \sum_{i=1}^N \beta_i\opt k(\x, \widehat \x_i)$ is optimal in~\eqref{distrob:wass:ker}.
	\end{Thrm}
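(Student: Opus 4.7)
The plan is to derive the kernelized formulation by combining the \emph{linear} distributionally robust regression reformulation of Theorem~\ref{Thrm:Regression}(ii), applied in the feature space $\HH$, with the representer theorem (Theorem~\ref{Thrm:rep}). First, I would observe that the lifted problem~\eqref{distrob:wass:ker} is an instance of the distributionally robust \emph{linear} regression problem of Section~\ref{subsec:drregression}, except that the role of $\RR^n$ is played by the Hilbert space $\HH$ and the input–output norm is the Euclidean (Hilbertian) norm on $\HH\times\RR$ induced by $\sqrt{\|\x_\HH - \x_\HH'\|_\HH^2 + (y-y')^2}$. Since this norm is self-dual, the proof of Theorem~\ref{Thrm:Regression}(ii) carries over verbatim from $\RR^{n+1}$ to $\HH\times\RR$, yielding
\begin{equation*}
\widehat J_\HH(\eps) \;=\; \inf_{h\in\HH}\; \frac{1}{N}\sum_{i=1}^N L\bigl(\inner{h}{\Phi(\widehat\x_i)}_\HH - \widehat y_i\bigr) \;+\; \eps\,\lip\,\sqrt{\|h\|_\HH^2 + 1}.
\end{equation*}

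Using the reproducing property $\inner{h}{\Phi(\widehat\x_i)}_\HH = h(\widehat\x_i)$, the objective takes the form $f\bigl((\widehat\x_1,\widehat y_1,h(\widehat\x_1)),\ldots,(\widehat\x_N,\widehat y_N,h(\widehat\x_N)),\|h\|_\HH\bigr)$, where the outer dependence on $\|h\|_\HH$ is through the increasing function $t\mapsto \eps\lip\sqrt{t^2+1}$. Theorem~\ref{Thrm:rep} thus applies and guarantees the existence of a minimizer of the form $h^\star(\x)=\sum_{i=1}^N\beta_i^\star k(\x,\widehat\x_i)$. Hence the infimum over $h\in\HH$ can be restricted without loss to this finite-dimensional parametrization.

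Substituting $h(\x)=\sum_{j=1}^N \beta_j k(\x,\widehat\x_j)$ into the objective, the reproducing property gives $h(\widehat\x_i)=\sum_{j=1}^N \KK_{ij}\beta_j$ and $\|h\|_\HH^2 = \bm\beta^\top\KK\bm\beta = \|\KK^{1/2}\bm\beta\|_2^2$. Plugging these identities in produces exactly
\begin{equation*}
\min_{\bm\beta\in\RR^N}\; \frac{1}{N}\sum_{i=1}^N L\!\left(\sum_{j=1}^N \KK_{ij}\beta_j - \widehat y_i\right) + \eps\,\lip\,\bigl\|(\KK^{1/2}\bm\beta,\,1)\bigr\|_2,
\end{equation*}
which is the claimed reformulation, and the optimal $\bm\beta^\star$ recovers the optimal hypothesis via $h^\star(\x)=\sum_{i=1}^N\beta_i^\star k(\x,\widehat\x_i)$.

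The main obstacle is the first step: verifying that Theorem~\ref{Thrm:Regression}(ii) extends from $\RR^{n+1}$ to the (possibly infinite-dimensional) Hilbert space $\HH\times\RR$. The proof of Theorem~\ref{Thrm:Regression}(ii) relies on a Kantorovich-type strong duality for Wasserstein distributionally robust problems (as in \citet{MohKun-14}) together with the characterization of the Lipschitz constant of $L(\inner{h}{\cdot}_\HH-\cdot)$ via the dual norm of $(h,-1)$. Both ingredients go through in Hilbert spaces once measurability and integrability of the transport plans are handled, and the dual norm coincides with the primal norm by self-duality. Everything after this extension is routine algebraic substitution enabled by the representer theorem.
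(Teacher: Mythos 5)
Your proposal is correct and follows essentially the same route as the paper's proof: apply Theorem~\ref{Thrm:Regression}(ii) in the lifted Hilbert space (exploiting self-duality of the Hilbert norm to get $\sqrt{\|h\|_\HH^2+1}$ as the regularizer), invoke the representer theorem~\ref{Thrm:rep} to restrict to finitely parametrized hypotheses, and substitute the kernel expansion. Your explicit flagging of the infinite-dimensional extension of the strong-duality step is a reasonable point of care that the paper itself compresses into ``using similar arguments.''
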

	
	\begin{Thrm} [Kernelized distributionally robust classification]
		\label{Thrm:Ker_Class}
		Suppose that $\XX=\RR^n$, $\YY=\{+1,-1\}$ and $k$ is a symmetric positive definite kernel on $\XX=\RR^n$ with associated RKHS $\HH$. If $\ell$ is generated by a convex and Lipschitz continuous loss function~$L$, that is, $\ell(h(\x),y)=L(yh(\x))$, then~\eqref{distrob:wass:ker} is equivalent~to
		\begin{align}
		\label{eq:kernelized-classification-problem}
		\optimize{
			\Min_{\beta_i,\lambda,s_i} & \displaystyle \Obj & \\
			\mathrm{s.t.} & \displaystyle L \Big(\Sum_{j=1}^N \widehat y_i \KK_{ij}\beta_j\Big) \leq s_i & i \in [N] \\
			& \displaystyle L \Big(-\Sum_{j=1}^N  \widehat y_i \KK_{ij}\beta_j\Big) - \kappa \lambda \leq s_i & i \in [N] \\
			& \lip \| \KK^{\frac{1}{2}} \bm \beta \|_2 \leq \lambda ,}
		\end{align}
		and for any of its minimizers $\bm \beta\opt$ the hypothesis $h\opt(\x) = \sum_{i=1}^N \beta_i\opt k(\x, \widehat \x_i)$ is optimal in~\eqref{distrob:wass:ker}.
	\end{Thrm}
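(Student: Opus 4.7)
The plan is to combine the Hilbert-space version of Theorem~\ref{Thrm:Classification}(ii) with the representer theorem (Theorem~\ref{Thrm:rep}), and then simplify using the reproducing property of $k$.

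First I would observe that the lifted learning problem~\eqref{distrob:wass:ker} is precisely a distributionally robust linear classification problem, but with input space $\HH$ (rather than $\RR^n$), transportation metric inherited from the Hilbert norm, and loss $\bm x_\HH \mapsto L(y \inner{h}{\bm x_\HH}_\HH)$. The proof of Theorem~\ref{Thrm:Classification}(ii) rests on convex duality and a computation of the Lipschitz modulus of the loss; the only property of $\RR^n$ it uses is that a dual norm on the input space is available, and for $\HH$ the Riesz representation theorem gives $\|\cdot\|_{\HH,*} = \|\cdot\|_\HH$. The same argument therefore carries over verbatim and shows that~\eqref{distrob:wass:ker} is equivalent to
\begin{align*}
\Inf_{h \in \HH,\,\lambda,\,s_i} \quad & \displaystyle \Obj \\
\mathrm{s.t.} \quad & L\bigl(\widehat y_i \inner{h}{\Phi(\widehat \x_i)}_\HH\bigr) \leq s_i, \qquad i \in [N], \\
& L\bigl(-\widehat y_i \inner{h}{\Phi(\widehat \x_i)}_\HH\bigr) - \kappa \lambda \leq s_i, \qquad i \in [N], \\
& \lip \|h\|_\HH \leq \lambda.
\end{align*}

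Next I would partially minimize out the auxiliary variables $\lambda$ and $s_i$. Projecting the $s_i$'s to their tightest values and absorbing the linking constraint $\lambda \geq \lip \|h\|_\HH$ reduces the problem to $\inf_{h \in \HH} F\bigl((\widehat \x_i,\widehat y_i, h(\widehat \x_i))_{i=1}^N, \|h\|_\HH\bigr)$ for some function $F$ that is visibly non-decreasing in its last argument (a larger value of $\|h\|_\HH$ only shrinks the feasible set for $\lambda$ and thus raises the infimum). By Theorem~\ref{Thrm:rep}, there is a minimizer of the form $h^\star(\x) = \sum_{i=1}^N \beta_i^\star k(\widehat \x_i,\x)$. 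Substituting this ansatz and invoking the reproducing property yields $h^\star(\widehat \x_i) = \inner{h^\star}{\Phi(\widehat \x_i)}_\HH = \sum_{j=1}^N \KK_{ij}\,\beta_j^\star$ and $\|h^\star\|_\HH^2 = \sum_{i,j=1}^N \beta_i^\star \beta_j^\star\, k(\widehat \x_i,\widehat \x_j) = \bm\beta^{\star\top}\KK\bm\beta^\star$, so $\|h^\star\|_\HH = \|\KK^{1/2}\bm\beta^\star\|_2$. Plugging these identities into the reformulation above and relabelling $h \leftrightarrow \bm\beta$ recovers precisely the kernelized program~\eqref{eq:kernelized-classification-problem}, while the optimal classifier takes the stated closed form.

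The main obstacle is the first step: one has to verify that Theorem~\ref{Thrm:Classification}(ii) extends to the (possibly infinite-dimensional) separable Hilbert space $\HH$. The crucial ingredients are the identity $\|\cdot\|_{\HH,*} = \|\cdot\|_\HH$ from Riesz representation and the availability of strong duality for the inner worst-case expectation over the Wasserstein ball in $\HH \times \YY$; both are standard but warrant explicit checks. A secondary subtlety is to confirm that, once $(\lambda, s_i)$ have been eliminated, the reduced objective $F$ genuinely fits the monotone-in-$\|h\|_\HH$ template required by the representer theorem.
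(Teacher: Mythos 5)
Your proposal is correct and follows the same path as the paper: extend Theorem~\ref{Thrm:Classification}(ii) to the RKHS using self-duality of the Hilbert norm, invoke the representer theorem (Theorem~\ref{Thrm:rep}) after verifying that the reduced objective is non-decreasing in $\|h\|_\HH$, and substitute the finite kernel expansion via the reproducing property. The subtlety you flag about extending the duality argument to a possibly infinite-dimensional $\HH$ is a real one; the paper resolves it by pointing to \citet[Theorem~1]{gao2016distributionally} for a full proof of the Hilbert-space reformulation rather than re-deriving it.
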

	
	Theorems~\ref{Thrm:Ker_Reg} and \ref{Thrm:Ker_Class} show that the lifted learning problem~\eqref{distrob:wass:ker} can be solved with similar computational effort as problem~\eqref{distrob:wass}, that is, optimizing over a possibly infinite-dimensional RKHS of nonlinear hypotheses is not substantially harder than optimizing over the space of linear hypotheses.

	\begin{Rmk} [Kernelization in robust regression and classification]
		Recall from Theorem~\ref{Thrm:RobConnection} that distributionally robust and classical robust linear regression are equivalent if $\Xi=\RR^{n+1}$ and the training samples are sufficiently dispersed in the sense of Assumption~\ref{Asmp:sep:reg}. Similarly, Theorem~\ref{Thrm:RobConnection:class} implies that distributionally robust and classical robust linear classification are equivalent if $\kappa=\infty$ and the training samples are non-separable in the sense of Assumption~\ref{Asmp:sep:class}. One can show that Theorems~\ref{Thrm:RobConnection} and~\ref{Thrm:RobConnection:class} naturally extend to nonlinear regression and classification models over an RKHS induced by some symmetric and positive definite kernel. Specifically, one can show that some {\em lifted} robust learning problem is equivalent to the {\em lifted} distributionally robust learning problem~\eqref{distrob:wass:ker} whenever the lifted training samples $(\Phi(\widehat \x_1),\widehat y_1), \cdots, (\Phi(\widehat \x_N),\widehat y_N)$ satisfy Assumption~\ref{Asmp:sep:reg} (for regression) or~\ref{Asmp:sep:class} (for classification). Theorems~\ref{Thrm:Ker_Reg} and~\ref{Thrm:Ker_Class} thus imply that the lifted robust regression and classification problems can be solved efficiently under mild regularity conditions whenever Assumptions~\ref{Asmp:sep:reg} and~\ref{Asmp:sep:class} hold, respectively. Unfortunately, these conditions are often violated for popular kernels. For example, the lifted samples are always linearly separable under the Gaussian kernel~\citep[p.~1496]{xu2009robustness}. In this case, the lifted robust classification problem can never be reduced to an efficiently solvable lifted distributionally robust classification problem of the form~\eqref{distrob:wass:ker}. In fact, no efficient method for solving the lifted robust classification problem seems to be known. In contrast, the lifted {\em distributionally} robust learning problems are always efficiently solvable under standard regularity conditions. 
	\end{Rmk}	

\change{
\subsection{Nonlinear Hypotheses: Neural Networks\protect\footnote{We are grateful to an anonymous referee for encouraging us to write this section.}}
\label{subsec:neural}
Families of {\em neural networks} represent particularly expressive classes of nonlinear hypotheses. In the following, we characterize a family $\HH$ of neural networks with $M\in\mathbb N$ layers through $M$ continuous activation functions $\sigma_m:\RR^{n_{m+1}} \rightarrow \RR^{n_{m+1}}$ and $M$ weight matrices $\bm W_m\in\RR^{n_{m+1}\times n_m}$, $m \in [M]$. The weight matrices can encode {\em fully connected} or {\em convolutional} layers, for example. If $n_1 = n$ and $n_{M+1} = 1$, then we may set
\begin{align*}
\HH = \left\{ h \in \RR^{\XX} ~:~ \exists \bm W_m\in\RR^{n_{m+1}\times n_m}, ~m \in [M], ~ h(\x) = \sigma_M\Big(\bm W_M \cdots \sigma_{2} \big(\bm W_2 \sigma_1(\bm W_1\x)\big) \cdots \Big)
\right\}.
\end{align*}
Each hypothesis $h\in\HH$ constitutes a neural network and is uniquely determined by the collection of all weight matrices $\bm W_{[M]} := (\bm W_1, \dots, \bm W_M)$. In order to emphasize the dependence on $\bm W_{[M]}$, we will sometimes use $h(\bm x;\bm W_{[M]})$ to denote the hypotheses in~$\HH$. Setting $\bm x_1=\bm x$, the features of the neural network are defined recursively through $\bm x_{m+1}=\sigma_m(\bm z_m)$, where $\bm z_m=\bm W_m \bm x_m$, $m\in [M]$. The features~$\bm x_m$, $m=2,\ldots,M$, correspond to the {\em hidden} layers of the neural network, while $\bm x_{M+1}$ determines its output.

\begin{Exp}[Activation functions]\label{exp:layers}
	The following activation functions are most widely used.
	\begin{enumerate}
		\item \textbf{Hyperbolic tangent}: $[\sigma_m(\bm z_m)]_i = {(\exp([\bm z_m]_i) - \exp(- [\bm z_m]_i))}/{(\exp([\bm z_m]_i) + \exp(- [\bm z_m]_i))}$
		\item \textbf{Sigmoid:} $[\sigma_m(\bm z_m)]_i = {1}/{(1 + \exp(- [\bm z_m]_i))}$
		\item \textbf{Softmax:} $[\sigma_m(\bm z_m)]_i = {\exp([\bm z_m]_i)}/{\sum_{j=1}^{n_{m+1}} \exp([\bm z_m]_j)}$
		\item \textbf{Rectified linear unit (ReLU):} $[\sigma_m(\bm z_m)]_i = \max\{ 0 , [\bm z_m]_i \}$
		\item \textbf{Exponential linear unit (ELU):} $[\sigma_m(\bm z_m)]_i = \max\{ 0 , [\bm z_m]_i \} + \min \{ 0, \alpha (\exp([\bm z_m]_i) - 1) \}$                        
	\end{enumerate}
\end{Exp}

The distributionally robust learning model over the hypothesis class $\HH$ can now be represented as
\begin{align}\label{eq:deepdro}
\inf_{h \in \HH} \, \sup_{\QQ \in \Wball} ~ \EE^{\QQ} \left[ \ell\big(h(\x), y \big) \right]  =
\inf_{\bm W_{[M]}} \, \sup_{\QQ \in \Wball} ~ \EE^{\QQ} \left[ \ell\big(h(\x; \bm W_{[M]}), y \big) \right],
\end{align}
where we use the transportation metrics~\eqref{eq:metric:additive} and~\eqref{metric} for regression and classification problems, respectively. Moreover, we adopt the standard convention that $\ell(h(\x),y) = L(h(\x)- y)$ for regression problems and $\ell(h(\x,y)) = L( y h(\x) )$ for classification problems, where $L$ is a convex and Lipschitz continuous univariate loss function. In the following we equip each feature space $\RR^{n_m}$ with a norm $\|\cdot\|$, $m\in[M+1]$. By slight abuse of notation, we use the same symbol for all norms even though the norms on different feature spaces may differ. Using the norm on $\RR^{n_{m+1}}$, we define the Lipschitz modulus of $\sigma_m$ as
\[
\mathop{\rm lip}(\sigma_m):=\sup_{\bm z,\bm z'\in\RR^{n_{m+1}}} \left\{ \frac{\left\|\sigma(\bm z) - \sigma(\bm z')\right\|}{\| \bm z - \bm z' \|} : \bm z\neq \bm z'\right\}.
\]
We are now ready to state the main result of this section, which provides a conservative upper bound on the distributionally robust learning model~\eqref{eq:deepdro}.

\begin{Thrm}[Distributionally robust learning with neural networks] \label{theorem:lips}
	The distributionally robust learning model~\eqref{eq:deepdro} is bounded above by the regularized empirical loss minimization problem
	\begin{align} \label{eq:deepreg}
	\inf_{\bm W_{[M]}} ~ \frac{1}{N} \sum_{i=1}^N \ell(h(\widehat \x_i; \bm W_{[M]}), \widehat y_i) + \rho \lip \max \left\{  \prod_{m=1}^M \mathop{\rm lip}(\sigma_m) \|\bm W_m\|, \frac{c}{\kappa} \right\},
	\end{align}
	where $c = 1$ for regression problems and $c= \max \{1 , 2 \sup_{{h \in \mathbb{H}, \bm x \in \mathbb X}} |h(\bm x)| \}$ for classification problems. Moreover, $\| \bm W_m \| = \sup_{\| \bm x_m \| = 1 } \| \bm W_m \bm x_m \|$ is the operator norm induced by the norms on $\RR^{n_m}$ and $\RR^{n_{m+1}}$.
\end{Thrm}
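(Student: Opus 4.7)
The plan is to combine two standard ingredients: (i) the Kantorovich--Rubinstein dual representation of the $1$-Wasserstein distance, and (ii) a layerwise estimate of the Lipschitz modulus of the composite map $\xxi = (\x,y) \mapsto \ell(h(\x;\bm W_{[M]}),y)$ with respect to the ground metric $d$. Since the Wasserstein distance in Definition~\ref{Def:Wass} is the $1$-Wasserstein distance induced by $d$, duality gives, for every Lipschitz function $f:\Xi \to \RR$,
\begin{align*}
\Sup_{\QQ \in \Wball} \EE^{\QQ}[f(\xxi)] \;\leq\; \EE^{\Pem}[f(\xxi)] + \eps \cdot \mathop{\rm lip}_d(f),
\end{align*}
where $\mathop{\rm lip}_d(f)$ denotes the Lipschitz modulus of $f$ with respect to $d$. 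It then suffices to take $f = \ell \circ h$ for a fixed weight tuple $\bm W_{[M]}$, control $\mathop{\rm lip}_d(\ell \circ h)$ by the stated max, and finally pass to the infimum over $\bm W_{[M]}$ on both sides.

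First I would bound the Lipschitz modulus of the network $h(\cdot;\bm W_{[M]}):\RR^{n_1}\to\RR$ by induction on the layers. Using that each activation has modulus $\mathop{\rm lip}(\sigma_m)$ and that $\|\bm W_m \bm x_m\| \leq \|\bm W_m\| \|\bm x_m\|$ for the operator norm induced by the chosen norms on the two adjacent feature spaces, composing across all $M$ layers yields $\mathop{\rm lip}(h(\cdot;\bm W_{[M]})) \leq \prod_{m=1}^M \mathop{\rm lip}(\sigma_m)\|\bm W_m\|$. Combining this with the $\lip$-Lipschitz continuity of $L$ gives a bound on the Lipschitz modulus of $\x \mapsto \ell(h(\x;\bm W_{[M]}),y)$ uniformly in $y$.

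Next I would translate this into a Lipschitz bound with respect to the joint metric $d$. In the regression case with metric~\eqref{eq:metric:additive}, writing $\ell(h(\x),y)=L(h(\x)-y)$ and adding and subtracting $L(h(\x')-y)$ gives
\begin{align*}
|\ell(h(\x),y) - \ell(h(\x'),y')| \;\leq\; \lip\,\mathop{\rm lip}(h)\|\x-\x'\| + \lip\,|y-y'| \;\leq\; \lip\max\!\big\{\mathop{\rm lip}(h),\,1/\kappa\big\}\,d((\x,y),(\x',y')),
\end{align*}
which matches the regularizer with $c=1$. In the classification case with metric~\eqref{metric} and $y\in\{+1,-1\}$, the $\x$-part is unchanged because $|y|=1$, while for a label flip
\begin{align*}
|L(h(\x)) - L(-h(\x))| \;\leq\; 2\lip\,|h(\x)| \;\leq\; 2\lip\, \Sup_{h\in\HH,\,\x\in\XX} |h(\x)|,
\end{align*}
so the overall Lipschitz modulus with respect to $d$ is at most $\lip\max\{\mathop{\rm lip}(h),\,c/\kappa\}$ with $c=\max\{1,2\sup_{h,\x}|h(\x)|\}$ (the extra $1$ in the max only weakens the bound and keeps the formula uniform across the two cases).

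Substituting either bound into the KR inequality, and then taking the infimum over $\bm W_{[M]}$ on both sides of
\begin{align*}
\Sup_{\QQ \in \Wball} \EE^{\QQ}\big[\ell(h(\x;\bm W_{[M]}),y)\big] \;\leq\; \frac{1}{N}\sum_{i=1}^N \ell(h(\widehat\x_i;\bm W_{[M]}),\widehat y_i) + \eps\,\lip\,\max\!\left\{\prod_{m=1}^M \mathop{\rm lip}(\sigma_m)\|\bm W_m\|,\, \tfrac{c}{\kappa}\right\},
\end{align*}
yields the claim. The main obstacle I anticipate is the classification case: the discrete $y$-component and the indicator-based metric force the label-flip cost $2\lip|h(\x)|$ to be bounded \emph{uniformly} in $(h,\x)$ in order to extract a Lipschitz constant that is independent of the data point, and this is precisely why the constant $c$ appears in the form stated. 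One should also briefly verify that the KR duality used at the outset applies to our slightly non-standard metric on $\XX\times\{\pm1\}$; this reduces to checking that $(\Xi,d)$ is Polish, which is immediate.
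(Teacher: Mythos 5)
Your proposal is correct and follows essentially the same route as the paper: bound the worst-case expectation by the empirical loss plus $\rho$ times the Lipschitz modulus of $\xxi\mapsto\ell(h(\x;\bm W_{[M]}),y)$ with respect to the ground metric, control that modulus by composing the layerwise estimates $\mathop{\rm lip}(\sigma_m)\|\bm W_m\|$, and treat the label-flip cost separately to obtain the constant $c$. The only cosmetic difference is that you invoke the Kantorovich--Rubinstein inequality directly, whereas the paper derives the same bound from Lemma~\ref{Lmm:main} by choosing the dual multiplier $\lambda$ equal to the Lipschitz constant.
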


\begin{Rmk} [Uniform upper bound on all neural networks]
	For classification problems the constant~$c$ in~\eqref{eq:deepreg} represents a uniform upper bound on all neural networks and may be difficult to evaluate in general. It is easy to estimate~$c$, however, if the last activation function is itself bounded such as the softmax function, which yields a probability distribution over the output space. In this case one may simply set $c=2$.
\end{Rmk}

The product term $\prod_{m=1}^M \mathop{\rm lip}(\sigma_m) \|\bm W_m\|$ in~\eqref{eq:deepreg} represents an upper bound on the Lipschitz modulus of $h(\x; \bm W_{[M]})$. We emphasize that computing the exact Lipschitz modulus of a neural network is NP-hard even if there are only two layers and all activation functions are of the ReLU type \citep[Theorem~2]{virmaux2018lipschitz}. In contrast, the upper bound at hand is easy to compute as all activation functions listed in Example~\ref{exp:layers} have Lipschitz modulus~1 with respect to the Euclidean norms on the domain and range spaces \citep{gouk2018regularisation,wiatowski2016discrete}. 
For more details on how to estimate the Lipschitz moduli of neural networks we refer to \citep{gouk2018regularisation,miyato2018spectral,neyshabur2018a,szegedy2013intriguing}.


Note that even though~\eqref{eq:deepreg} represents a finite-dimensional optimization problem over the weight matrices of the neural network, both the empirical prediction loss as well as the regularization term are non-convex in $\bm W_{[M]}$, which complicates numerical solution. If $\kappa=\infty$, however, one can derive an alternative upper bound on the distributionally robust learning model~\eqref{eq:deepdro} with a {\em convex} regularization~term.

\begin{Crl}[Convex regularization term] \label{corollary:safe}
	If $\kappa = \infty$, then there is $\overline \rho\ge 0$ such that the distributionally robust learning model~\eqref{eq:deepdro} is bounded above by the regularized empirical loss minimization problem
	\begin{align} \label{eq:deepreg2}
	\inf_{\bm W_{[M]}} ~ \frac{1}{N} \sum_{i=1}^N \ell(h(\widehat \x_i; \bm W_{[M]}), \widehat y_i) + \overline \rho \sum_{m=1}^M \| \bm W_m \|.
	\end{align}
\end{Crl}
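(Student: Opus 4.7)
The plan is to start from Theorem~\ref{theorem:lips} specialized to the limiting case $\kappa = \infty$: the maximum in the regularization term then collapses to its first argument, so that the distributionally robust learning model~\eqref{eq:deepdro} is bounded above by
\[
V_{\mathrm{prod}} := \inf_{\bm W_{[M]}} \Bigl[ f(\bm W_{[M]}) + \rho \, L_\sigma \prod_{m=1}^M \|\bm W_m\| \Bigr],
\]
where I abbreviate $f(\bm W_{[M]}) := \frac{1}{N}\sum_{i=1}^N \ell(h(\widehat \x_i; \bm W_{[M]}), \widehat y_i)$ and $L_\sigma := \lip \prod_{m=1}^M \mathop{\rm lip}(\sigma_m)$. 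Denoting the optimal value of~\eqref{eq:deepreg2} by $V_{\mathrm{sum}}$, it suffices to exhibit a single $\overline\rho \ge 0$ for which $V_{\mathrm{prod}} \le V_{\mathrm{sum}}$, since the original distributionally robust value is already sandwiched below $V_{\mathrm{prod}}$.

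The central idea is to exploit the implicit boundedness forced by the sum regularizer. Both $V_{\mathrm{prod}}$ and $V_{\mathrm{sum}}$ are majorized by $f(\bm 0)$---a finite constant for every loss function considered in this paper---since the zero weight configuration is admissible in both problems. Hence any $\epsilon$-optimizer $\bm W_{[M]}^\star$ of~\eqref{eq:deepreg2} obeys $\overline\rho \sum_{m=1}^M \|\bm W_m^\star\| \le f(\bm 0)+\epsilon$, which confines the sum of weight norms to an interval whose length shrinks like $1/\overline\rho$. On this interval I would apply the arithmetic--geometric mean inequality
\[
\prod_{m=1}^M \|\bm W_m^\star\| \;\le\; \Bigl(\tfrac{1}{M}\sum_{m=1}^M \|\bm W_m^\star\|\Bigr)^M \;\le\; \Bigl(\tfrac{f(\bm 0)+\epsilon}{M\,\overline\rho}\Bigr)^M,
\]
which turns the product regularizer into a superlinear function of the sum regularizer that can be controlled by choosing $\overline\rho$ large enough.

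A direct algebraic check then shows that the choice $\overline\rho \ge \bigl(\rho L_\sigma f(\bm 0)^{M-1}/M^M\bigr)^{1/M}$ ensures, as $\epsilon \downarrow 0$, the pointwise domination $\rho L_\sigma \prod_{m=1}^M \|\bm W_m^\star\| \le \overline\rho \sum_{m=1}^M \|\bm W_m^\star\|$ at every such near-optimizer. Adding $f(\bm W_{[M]}^\star)$ to both sides and using the $\epsilon$-optimality of $\bm W_{[M]}^\star$ yields $V_{\mathrm{prod}} \le f(\bm W_{[M]}^\star) + \rho L_\sigma \prod_m \|\bm W_m^\star\| \le V_{\mathrm{sum}} + \epsilon$; passing to the limit $\epsilon \downarrow 0$ and chaining with Theorem~\ref{theorem:lips} closes the argument.

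The principal obstacle is that the two regularizers are \emph{not} pointwise comparable across all weight configurations: the product dominates the sum for large weights while the sum dominates the product for small ones, so no uniform multiplicative constant can make the desired inequality hold pointwise. The argument therefore has to operate at the level of infima rather than pointwise, exploiting that the sum regularizer implicitly traps the near-optimizers in a ball whose radius shrinks with $\overline\rho$, on which AM-GM can absorb the product into the sum. This trade-off is precisely what forces the nonlinear dependence of the admissible $\overline\rho$ on the problem data $(\rho, L_\sigma, f(\bm 0), M)$.
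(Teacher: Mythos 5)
Your argument is correct and takes a genuinely different route from the paper. The paper also starts from Theorem~\ref{theorem:lips} at $\kappa=\infty$ followed by the arithmetic--geometric mean bound $\prod_m\|\bm W_m\|\le\bigl(\frac{1}{M}\sum_m\|\bm W_m\|\bigr)^M$, but it then invokes Everett's generalized Lagrange multiplier theorem: assuming a minimizer $\bm W^\star_{[M]}$ of the AM--GM-penalized problem exists, Everett's result shows that $\bm W^\star_{[M]}$ solves the constrained problem $\min f(\bm W_{[M]})$ subject to $\sum_m\|\bm W_m\|\le M\theta$ with $\theta=\frac{1}{M}\sum_m\|\bm W^\star_m\|$, whence an implicit Lagrange multiplier $\overline\rho$ is asserted for which $\bm W^\star_{[M]}$ also optimizes~\eqref{eq:deepreg2}. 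Your proof bypasses both the existence assumption on that minimizer and the appeal to Lagrange multipliers in a non-convex constrained problem by working directly with $\epsilon$-optimizers of~\eqref{eq:deepreg2}: since the losses are non-negative and the zero-weight configuration yields a finite objective, the sum regularizer traps every $\epsilon$-optimizer inside a ball of radius $(f(\bm 0)+\epsilon)/\overline\rho$, on which AM--GM lets the product term be absorbed into the sum term once $\overline\rho$ exceeds an explicit threshold. This yields a concrete admissible $\overline\rho$ rather than an implicit multiplier, and it remains valid even if the intermediate infima are not attained. One cosmetic remark: the trap radius involves $f(\bm 0)+\epsilon$ rather than $f(\bm 0)$, so the threshold you state should either carry the $+\epsilon$ or be read as a strict inequality valid for all sufficiently small $\epsilon$; this does not affect the conclusion. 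Your closing observation that the two regularizers are not pointwise comparable---so that the comparison must be made only over the near-optimizers that the sum regularizer implicitly confines---is exactly the right structural reason why $\overline\rho$ must depend nonlinearly on the problem data.
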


As the empirical prediction loss remains non-convex, it is expedient to address problem~\eqref{eq:deepreg2} with local optimization methods such as stochastic gradient descent algorithms. For a comprehensive review of first- and the second-order stochastic gradient algorithms we refer to~\citep{agarwal2017second} and the references therein. In the numerical experiments we will use a stochastic proximal gradient descent algorithm that exploits the convexity of the regularization term and generates iterates $\bm W^k_{[M]}$ for $k\in\mathbb N$ according to the update rule
\begin{equation*}
	\bm W_m^{k+1}= \mathrm{prox}_{\eta_k\overline\rho \|\bm W_m\|} \left( \bm W^k_m- \eta_k\nabla_{\bm W_m} \ell(h(\widehat \x_{i_k}; \bm W^k_{[M]}), \widehat y_{i_k}) \right)\quad \forall m\in[M],
\end{equation*}
where $\eta_k>0$ is a given step size and $i_k$ is drawn randomly from the index set $[N]$, see, {\em e.g.},~\cite{nitanda2014stochastic}. Here, the proximal operator associated with a convex function $\varphi: \RR^{n_{m+1}\times n_m} \rightarrow \RR$ is defined through
\begin{align*}
\mathrm{prox}_\varphi(\bm W_m) := \arg\min_{\bm W'_m} ~ \varphi(\bm W'_m) + \frac{1}{2} \| \bm W'_m - \bm W_m \|_F^2,
\end{align*}
where $\| \cdot \|_F$ stands for the Frobenius norm. The algorithm is stopped as soon as the improvement of the objective value falls below a prescribed threshold. As the empirical prediction loss is non-convex and potentially non-smooth, the algorithm fails to offer any strong performance guarantees. For the scalability of the algorithm, however, it is essential that the proximal operator can be evaluated efficiently. 

\begin{Exp}[Proximal operator]
	Suppose that all feature spaces $\RR^{n_m}$ are equipped with the $p$-norm for some $p \in \{1, 2, \infty \}$, which implies that all parameter spaces $\RR^{n_{m+1}\times n_m}$ are equipped with the corresponding matrix $p$-norm. In this case the proximal operator of $\varphi(\bm W_m)=\eta\|\bm W_m\|_p$ for some fixed $\eta>0$ can be evaluated highly efficiently. 
	\begin{enumerate}
		\item \textbf{MACS ($p=1$):} The matrix $1$-norm returns the maximum absolute column sum (MACS). Evaluating the proximal operator of~$\varphi(\bm W_m)=\eta\|\bm W_m\|_1$ amounts to solving the minimization problem
		\begin{align*}
		\mathrm{prox}_{\varphi}(\bm W_m) = \optimize{
			\displaystyle \min_{\bm W_m', u} & \displaystyle \eta u + \sum_{i=1}^{n_m} \| [\bm W'_m]_{:,i} - [\bm W_m]_{:,i} \|_2^2 \\
			\mathrm{s.t.} & \| [\bm W'_m]_{:,i} \|_1 \leq u\quad i\in [n_m],}
		\end{align*}
		where $[\bm W_m]_{:,i}$ and $[\bm W'_m]_{:,i}$ represent the $i$-th columns of $\bm W_m$ and $\bm W'_m$, respectively. For any fixed $u$, the above problem decomposes into $n_m$ projections of the vectors $[\bm W_m]_{:,i}$, $i\in[n_m]$, to the $\ell_1$-ball of radius $u$ centered at the origin. Each of these projections can be computed via an efficient sorting algorithm proposed in~\citep{duchi2008efficient}. Next, we can use any line search method such as the golden-section search algorithm to optimize over $u$, thereby solving the full proximal problem.
		
		\item \textbf{Spectral  ($p=2$):} The matrix $2$-norm coincides with the spectral norm, which returns the maximum singular value. In this case, the proximal problem for $\varphi(\bm W_m)=\eta\|\bm W_m\|_2$ can be solved analytically via singular value thresholding \citep[Theorem 2.1]{cai2010singular}, that is, given the singular value decomposition $\bm W_m = \bm U \bm S \bm V^\top$ with $\bm U\in\RR^{n_{m+1}\times n_{m+1}}$ orthogonal, $\bm S\in\RR_+^{n_{m+1}\times n_{m}}$ diagonal and $\bm V\in\RR^{n_{m}\times n_{m}}$ orthogonal, the proximal operator satisfies
		\begin{align*}
		\mathrm{prox}_{\varphi}(\bm W_m) = \mathrm{prox}_{\varphi}(\bm U \bm S \bm V^\top) = \bm U \widetilde{\bm S} \bm V^\top, \qquad     \mbox{where} \quad \widetilde{\bm S}_{ij} = \max \{ \bm S_{i j} - \eta, 0 \} \,.
		\end{align*}
		The singular value decomposition can be accelerated using a randomized algorithm proposed in~\citep{halko2011finding}.
		
		\item \textbf{MARS ($p=\infty$):} The matrix $\infty$-norm returns the maximum absolute row sum (MARS) and thus satisfies $\| \bm W_m \|_{\infty} = \| \bm W_m^\top \|_1$. Therefore, one can use the iterative scheme developed for MACS to compute the proximal operator of $\varphi(\bm W_m)=\eta\| \bm W_m \|_\infty$ by simply transposing the weight matrix~$\bm W_m$.
	\end{enumerate}
\end{Exp}

The convergence behavior of the stochastic proximal gradient descent algorithm can be further improved by including a momentum term inside the proximal operator, see, {\em e.g.},~\cite{loizou2017momentum}.
}

	
	\section{Generalization Bounds}
	\label{sec:Probabilistic}
	
	Generalization bounds constitute upper confidence bounds on the out-of-sample error. Traditionally, generalization bounds are derived by controlling the complexity of the hypothesis space, which is typically quantified in terms of its VC-dimension or via covering numbers or Rademacher averages~\citep{shalev2014understanding}. \change{Strengthened generalization bounds for large margin classifiers can be obtained by improving the estimates of the VC-dimension and the Rademacher average \citep{shivaswamy2007ellipsoidal, shivaswamy2010maximum}.} We will now demonstrate that distributionally robust learning models of the type~\eqref{distrob:wass} or~\eqref{distrob:wass:ker} enjoy simple new generalization bounds that can be obtained under minimal assumptions. In particular, they do not rely on any notions of hypothesis complexity and may therefore even extend to hypothesis spaces with infinite VC-dimensions. Our approach is reminiscent of the generalization theory for robust support vector machines portrayed in~\citep{xu2009robustness}, which also replaces measures of hypothesis complexity with robustness properties. However, we derive explicit finite sample guarantees, while~\citep{xu2009robustness} establishes asymptotic consistency results. Moreover, we relax some technical conditions used in~\citep{xu2009robustness} such as the compactness of the input space~$\XX$.
	
	The key enabling mechanism of our analysis is a measure concentration property of the Wasserstein metric, which holds whenever the unknown data-generating distribution has exponentially decaying tails.  
	
	\begin{Asmp2} [Light-tailed distribution]
		\label{Asmp:light}
		There exist constants $a>1$ and $A>0$ and a reference point $\xxi'\in\mathbb R^{n+1}$ such that $\EE^\PP[\exp \left(d(\xxi,\xxi'))^a \right)] \le A$, where $d$ denotes the usual mass transportation cost.
	\end{Asmp2}

	\begin{Thrm2}[Measure concentration {\citep[Theorem 2]{fournier2015rate}}]
		\label{thm:concentration}
		If Assumption~\ref{Asmp:light} holds, then we have
		\begin{align}
		\label{concentration}
		\PP^N \Big\{ W(\PP, \Pem) \ge \eps \Big \} \le \left\{ \begin{array}{ll} c_1 \exp\big({-c_2N\eps^{\max\{n+1,2\}}}\big) & \text{if } \eps \le 1, \\ c_1 \exp\big({-c_2N\eps^a}\big) & \text{if } \eps > 1,\end{array}\right.
		\end{align}
		for all $N \ge 1$, $n\neq 1$, and $\eps>0$, where the constants $c_1, c_2>0$ depend only on $a$, $A$, $d$ and $n$.\footnote{{A similar but slightly more complicated inequality also holds for the special case $n = 1$; see \citep[Theorem 2]{fournier2015rate} for details.}}
	\end{Thrm2}
	
	Theorem~\ref{concentration} asserts that the empirical distribution $\Pem$ converges exponentially fast to the unknown data-generating distribution $\PP$, in probability with respect to the Wasserstein metric, as the sample size $N$ tends to infinity. We can now derive simple generalization bounds by increasing the Wasserstein radius $\eps$ until the violation probability on the of the right hand side of~\eqref{concentration} drops below a prescribed significance level $\eta\in (0,1]$. Specifically, Theorem~\ref{concentration} implies that $\Prob^N \{ \mathds{P} \in \mathds{B}_\eps(\widehat{\mathds{P}}_N) \} \geq 1-\eta$ for any $\eps\ge \eps_N(\eta)$, where 
	\begin{align}
	\label{eps:opt}
	\eps_N(\eta) = \begin{cases}
	\displaystyle \left( \frac{\log(c_1/ \eta)}{c_2 N} \right)^{\frac{1}{\max\{n+1,2\}}} & \mathrm{if} ~ N \geq  \frac{\log(c_1/ \eta)}{c_2}, \\
	\displaystyle \left( \frac{\log(c_1/ \eta)}{c_2 N} \right)^{\frac{1}{a}} & \mathrm{if} ~ N <  \frac{\log(c_1 /\eta)}{c_2}.
	\end{cases}	
	\end{align}
	
	\begin{Thrm2} [Basic generalization bound {\citep[Theorem~3.5]{MohKun-14}}] 
		\label{Thrm:uniform}
		If Assumption~\ref{Asmp:light} holds, then  
		\begin{align} 
		\label{finite:sample:guarantee}
		\Prob^N \left\{ \mathds{E}^\mathds{P} \left[ \ell(\inner{\w}{\x}, y) \right] \leq \sup_{\QQ \in \Wball} 
		\mathds{E}^\mathds{\QQ} \left[ \ell(\inner{\w}{\x}, y) \right] ~\forall \w \in \RR^n\right\} \geq 1 - \eta
		\end{align}
		for any $N\ge 1$, $n\neq 1$, $\eta\in(0,1]$ and $\eps\ge \eps_N(\eta)$.  
	\end{Thrm2}
	
	\begin{Rmk2} [Discussion of basic generalization bound] \label{Rmk:perf_guarantee}
		The following comments are in order.
		
		\begin{enumerate} [label=\Roman*.] \setlength{\itemsep}{0mm}
			
			\item {\textbf{Performance guarantees for optimal hypotheses:}} 
			If $\widehat J(\eps)$ denotes the minimum and $\widehat \w$ a minimizer of the distributionally robust learning problem~\eqref{distrob:wass}, then Theorem~\ref{Thrm:uniform} implies that
			$$ \Prob^N \left\{ \EE^\PP \left[ \ell(\inner{\widehat \w}{\x}, y) \right] \leq \widehat J(\eps) \right\} \ge 1 - \eta$$
			for any $N\ge 1$, $n> 1$, $\eta\in(0,1]$ and $\eps\ge \eps_N(\eta)$. 
			
			\item {\textbf{Light-tail assumption:}} Assumption~\ref{Asmp:light} is restrictive but unavoidable for any measure concentration result of the type described in Theorem~\ref{thm:concentration}. It is automatically satisfied if the input-output pair has bounded support or is known to follow a Gaussian or exponential distribution, for instance. 
			
			\item {\textbf{Asymptotic consistency:}} It is clear from~\eqref{eps:opt} that for any fixed $\eta \in (0,1]$, the radius $\eps_N(\eta)$ tends to 0 as $N$ increases. Moreover, Theorem~3.6 in~\citep{MohKun-14} implies that if $\eta_N$ converges to 0 at a carefully chosen rate ({\em e.g.}, $\eta_N=\exp(-\sqrt{N})$), then the solution of the distributionally robust learning problem~\eqref{distrob:wass} with Wasserstein radius $\eps=\eps_N(\eta_N)$ converges almost surely to the solution of the ideal learning problem that minimizes the out-of-sample error under the unknown true distribution $\PP$.	
			
			\item {\textbf{Curse of dimensionality:}} The Wasserstein radius~\eqref{eps:opt} has two decay regimes. For small~$N$, $\eps_N(\eta)$ decays as $N^{-\frac{1}{a}}$, and for large $N$ it is proportional to $N^{-\frac{1}{n+1}}$. We thus face a curse of dimensionality for large sample sizes. In order to half the Wasserstein radius, one has to increase $N$ by a factor of~$2^n$. This curse of dimensionality is fundamental, {\em i.e.}, the dependence of the measure concentration result in Theorem~\ref{thm:concentration} on the input dimension $n$ cannot be improved for generic distributions $\PP$; see \citep{weed2017} or \citep[Section~1.3]{fournier2015rate}. Improvements are only possible in special cases, {\em e.g.}, if $\PP$ is finitely supported.
			
			\item{\textbf{Extension to nonlinear hypotheses:}}
			Theorem~\ref{Thrm:uniform} directly extends to any distributionally robust learning problem over an RKHS $\HH$ induced by some symmetric and positive definite kernel function~$k$. Specifically, if~$k$ is calm in the sense of Assumption~\ref{ass:calmness} with growth function~$g$, then we have
			\begin{align} 
			\label{eq:RKHS-generalization}
			\Prob^N \left\{ \mathds{E}^\mathds{P}[\ell(h(\x),y)]  \leq \Sup_{\QQ \in \BB_{\eps} (\Pem^{\HH})} \EE ^ \QQ \left[ \ell(\inner{h}{\bm x_\HH}_{\HH},y) \right] ~\forall h
			\in\HH
			\right\} \geq 1 - \eta
			\end{align}
			for any $N\ge 1$, $n\neq 1$, $\eta\in(0,1]$ and $\eps\ge cg(\eps_N(\eta))$, where $c=\sqrt{2}$ for regression problems and $c=1$ for classification problems. To see this, note that the inclusion $\PP\in  \BB_{\eps}(\Pem)$ implies 
			\begin{equation}
			\label{eq:RKHS-generalization2}
			\mathds{E}^\mathds{P}[\ell(h(\x),y)]  \leq \Sup_{\QQ \in \BB_{\eps} (\Pem)} \EE ^ \QQ \left[ \ell(h(\x),y) \right]
			\leq \Sup_{\QQ \in \BB_{\eps} (\Pem^{\HH})} \EE ^ \QQ \left[ \ell(\inner{h}{\bm x_\HH}_{\HH},y) \right] \quad \forall h
			\in\HH,
			\end{equation}
			where the second inequality follows from the proof of Theorem~\ref{Thrm:relation}. The generalization bound~\eqref{eq:RKHS-generalization} thus holds because $\Prob^N \{ \mathds{P} \in \mathds{B}_\eps(\widehat{\mathds{P}}_N) \} \geq 1-\eta$ for any $\eps\ge \eps_N(\eta)$. Note that the rightmost term in~\eqref{eq:RKHS-generalization2} can be computed for any finitely generated hypothesis $h\in\HH$ representable as $h(\x) = \sum_{i=1}^N \beta_i k(\x, \widehat \x_i)$, which follows from Theorems~\ref{Thrm:Ker_Reg} and~\ref{Thrm:Ker_Class}, while the middle term is hard to compute. We emphasize that the generalization bound~\eqref{eq:RKHS-generalization} does not rely on any notion of hypothesis complexity and remains valid even if $\HH$ has infinite VC-dimension ({\em e.g.}, if $\HH$ is generated by the Gaussian kernel).
		\end{enumerate}
	\end{Rmk2}

	Theorem~\ref{thm:concentration} provides a confidence set for the unknown probability distribution $\PP$, and Theorem~\ref{Thrm:uniform} uses this confidence set to construct a uniform generalization bound on the prediction error under $\PP$. The radius of the confidence set for $\PP$ decreases slowly due to a curse of dimensionality, but the decay rate is essentially optimal. This does not imply that the decay rate of the generalization bound~\eqref{finite:sample:guarantee} is optimal, too. In fact, the worst-case expected error over a Wasserstein ball of radius $\eps$ can be a $(1-\eta)$-confidence bound on the expected error under $\PP$ even if the Wasserstein ball fails to contain $\PP$ with confidence $1-\eta$. Thus, the measure concentration result of Theorem~\ref{thm:concentration} is too powerful for our purposes and leads to an over-conservative generalization bound. Below we will show that the curse of dimensionality in the generalization bound \eqref{finite:sample:guarantee} can be broken if we impose the following restriction on the hypothesis space. 
	
	\begin{Asmp2}[Hypothesis space] \label{Asmp:improved}
		The space of admissible hypotheses in~\eqref{distrob:wass} is restricted to $\WW\subseteq \mathbb R^n$. There exists $\underline \Omega>0$ with $\inf_{\w\in\WW} \| (\w, -1) \|_* \geq \underline \Omega$ if \eqref{distrob:wass} is a regression problem and $\inf_{\w\in\WW} \| \w \|_* \geq \underline \Omega$ if \eqref{distrob:wass} is a classification problem. Similarly, there exists $\overline\Omega\ge 0$ with $\sup_{\w, \w' \in \WW} \| \w - \w' \|_\infty \leq \overline \Omega$.
	\end{Asmp2}
	
	\begin{Thrm2} [Improved generalization bound] \label{Thrm:improved}
		Suppose that Assumptions~\ref{Asmp:light} and \ref{Asmp:improved} hold, and the function $L$ is Lipschitz continuous. Moreover, assume that $\Xi=\mathbb R^{n+1}$ and $ M_n = \max_{i\leq n} \| \bm e^{n+1}_i \|_* $ if~\eqref{distrob:wass} is a regression problem, while $\Xi =\mathbb R^n\times\{-1,1\}$ and $ M_n = \max_{i\leq n} \| \bm e^{n}_i \|_* $ if~\eqref{distrob:wass} is a classification problem, where $\bm e^{n}_i$ is the $i$-th standard basis vector in $\RR^{n}$. Then, there exist constants $c_3\ge1$, $c_4>0$ depending only on the light tail constants $a$ and $A$ such that the generalization bound~\eqref{finite:sample:guarantee} holds for any $N \ge \max \left\{ (16n/c_4)^2, 16 \log (c_3/\eta)/c_4 \right\}$, $\eta\in(0,1]$ and $\eps\ge \eps'_N(\eta)$, where
		\begin{align*}
		\eps'_N(\eta) = {2 \overline \Omega \over \sqrt{N} \underline \Omega} \bigg[ M_nnA + {\sqrt{n\log (\sqrt{N}) + \log (c_3/\eta) \over c_4}}\, \bigg].	
		\end{align*}
	\end{Thrm2}
	
	The improved generalization bound from Theorem~\ref{Asmp:improved} does not suffer from a curse of dimensionality. In fact, in order to half the Wasserstein radius $\eps'_N(\eta)$, it suffices to increase the sample size $N$ by a factor of~$4$, irrespective of the input dimension~$n$.

	\begin{Rmk2}[Discussion of improved generalization bound] \label{Rmk:improved}
		The following comments are in order.
		\begin{enumerate}[label=\Roman*.]
			\setlength{\itemsep}{0mm}			
			\item {\textbf{Bounds on hypothesis space:}} Assumption~\ref{Asmp:improved} imposes upper and lower bounds on~$\WW$. The upper bound enables us to control the difference between the empirical and the true expected loss uniformly across all admissible hypotheses. This bound is less restrictive than the uniform bound on the loss function used to derive Rademacher generalization bounds (see, {\em e.g.},~\citep[Theorem~26.4]{shalev2014understanding}), which essentially imposes upper bounds both on the hypotheses and the input-output pairs. The lower bound in Assumption~\ref{Asmp:improved} is restrictive for classification problems but trivially holds for regression problems because $\|(\w,-1)\|_*$ is uniformly bounded away from zero for any (dual) norm on $\RR^{n+1}$.
			
			\item {\textbf{Breaking the curse of dimensionality:}} By leveraging Assumption~\ref{Asmp:improved}, Theorem~\ref{Thrm:improved} reduces the critical Wasserstein radius in the generalization bound~\eqref{finite:sample:guarantee} from $\eps_N(\eta)\propto \mathcal{O}([\log(\eta^{-1}) / N]^{1/(n+1)})$, which suffers from a curse of dimensionality, to $\eps'_N(\eta)\propto \mathcal{O}([ \log(\eta^{-1}) + n\log(N) ) / N]^{1/2})$, which essentially follows a square root law reminiscent of the central limit theorem.
		\end{enumerate}		
	\end{Rmk2}
	
	
	\section{Error and Risk Estimation}
	\label{subsec:Uncertainty}
	
	Once a hypothesis $h(\x)$ has been chosen, it is instructive to derive pessimistic {\em and} optimistic estimates of its out-of-sample prediction error (in the case of regression) or its out-of-sample risk (in the case of classification). We will argue below that the distributionally robust optimization techniques developed in this paper also offer new perspectives on error and risk estimation. For ease of exposition, we ignore any support constraints, that is, we set $\XX=\RR^{n}$ and $\YY=\RR$ (for regression) or $\XX=\RR^n$ and $\YY=\{+1,-1\}$ (for classification). Moreover, we focus on linear hypotheses of the form $h(\x)=\inner{\w}{\x}$. Note, however, that all results extend directly to conic representable support sets  and to nonlinear hypotheses.
	
	In the context of regression, we aim to estimate the prediction error $\error(\w) = \EE^\PP \left[ |y - \inner{\w}{\x}| \right]$ or, more precisely, the mean absolute prediction error under the unknown data-generating distribution $\PP$. As usual, we assume that the transportation metric $d$ is induced by a norm $\|\cdot\|$ on the input-output space $\RR^{n+1}$.

	\begin{Thrm2} [Error bounds in linear regression]
		\label{Thrm:Error}
		The prediction error admits the following estimates.
		\begin{subequations} \label{err}
			\begin{enumerate} [label=(\roman*)]
				\item The worst-case error $\error_{\max}(\w) = \sup_ {\QQ \in \Wball} \EE^\QQ  \left[ | y - \inner{\w}{\x} | \right]$ is given by
				\begin{align} \label{worst_case:err}
				\error_{\max}(\w) = \frac{1}{N} \SumN |\widehat y_i - \inner{\w}{\widehat \x_i} | + \eps \| (\w, -1) \|_*.
				\end{align}
				\item The best-case error $\error_{\min}(\w) = \inf_ {\QQ \in \Wball} \EE^\QQ  \left[ |y - \inner{\w}{\x} | \right]$ is given by
				\begin{align} \label{best_case:err}
				\error_{\min}(\w) = \max \left\{ \frac{1}{N} \SumN |\widehat y_i - \inner{\w}{\widehat \x_i} | - \eps \| (\w, -1) \|_* , 0 \right\}
				\end{align}
			\end{enumerate}		
		\end{subequations}		
	\end{Thrm2}
	
	In the context of classification, we aim to quantify the risk $\risk(\w) = \PP \left[ y \neq \text{sign} (\inner{\w}{\x})\right]$, that is, the misclassification probability under the unknown true distribution~$\PP$. Note that the risk can equivalently be defined as the expectation of a characteristic function, that is, $\risk(\w) = \EE^\PP  [\mathds{1}_{\lbrace y \neq \inner{\w}{\x} \rbrace}]$. As usual, we assume that the transportation metric $d$ is of the form~\eqref{metric}, where $\kappa\geq 0$ is the cost of flipping a label.

	\begin{Thrm2} [Risk bounds in linear classification] \label{Thrm:Risk}
		The risk admits the following estimates.
		\begin{subequations} \label{risk}
			\begin{enumerate} [label=(\roman*)]
				\item The worst-case risk $\risk_{\max}(\w) = \sup_ {\QQ \in \Wball} \EE^\QQ  [\mathds{1}_{\lbrace y \neq \inner{\w}{\x} \rbrace}]$ is given by
				\begin{align} \label{worst_case:risk}
				\risk_{\max}(\w) = \optimize{
					\Min_{ \substack{\lambda,s_i \\ r_i, t_i} } & \displaystyle \Obj \\
					\mathrm{s.t.} & 1 - r_i \widehat{y_i} \inner{\w}{\widehat{\x}_i} \leq s_i & i \in [N]\\ [0.2em]
					& 1 + t_i \widehat{y_i} \inner{\w}{\widehat{\x}_i} - \lambda \kappa \leq s_i & i \in [N]\\ [0.2em]
					& r_i \| \w \|_* \leq \lambda , ~ t_i \| \w \|_* \leq \lambda & i \in [N] \\ [0.2em]
					& r_i, t_i, s_i \geq 0 &i \in [N].}
				\end{align}
				\item The best-case risk $\risk_{\min}(\w) = \inf_ {\QQ \in \Wball} \EE^\QQ  [\mathds{1}_{\lbrace y \neq \inner{\w}{\x} \rbrace}]$ is given by
				\begin{align} \label{best_case:risk}
				\risk_{\min}(\w) = 1 - \optimize{
					\Min_{ \substack{\lambda,s_i \\ r_i, t_i} } & \displaystyle \Obj \\
					\mathrm{s.t.} & 1 + r_i \widehat{y_i} \inner{\w}{\widehat{\x}_i} \leq s_i & i \in [N]\\ [0.2em]
					& 1 - t_i \widehat{y_i} \inner{\w}{\widehat{\x}_i} - \lambda \kappa \leq s_i & i \in [N]\\ [0.2em]
					& r_i \| \w \|_* \leq \lambda , ~ t_i \| \w \|_* \leq \lambda & i \in [N] \\ [0.2em]
					& r_i, t_i, s_i \geq 0 &i \in [N].}
				\end{align}
			\end{enumerate}		
		\end{subequations}		
	\end{Thrm2}
	
	We emphasize that, as the hypothesis $\w$ is fixed, the error and risk estimation problems~\eqref{err} and \eqref{risk} constitute tractable linear programs that can be solved highly efficiently.
	
	\begin{Rmk2} [Confidence intervals for error and risk]
		\label{Rmk:conf}
		If the Wasserstein radius is set to $\eps_N(\eta/2)$ defined in~\eqref{eps:opt}, where $\eta \in (0,1]$ is a prescribed significance level, then Theorem~\ref{Thrm:uniform} implies that $\error (\widehat \w)\in [\error_{\min} (\widehat \w), \error_{\max} (\widehat \w)]$ and $\risk(\widehat \w) \in [\risk_{\min}(\widehat \w), \risk_{\max}(\widehat \w)]$ with confidence $1-\eta$ for any $\widehat \w\in\RR^n$ that may even depend on the training data. Theorem~\ref{Thrm:Error} implies that the confidence interval for the true error $\error(\widehat \w)$ can be calculated analytically from~\eqref{err}, while Theorem~\ref{Thrm:Risk} implies that the confidence interval for the true risk $\risk(\widehat \w)$ can be computed efficiently by solving the tractable linear programs~\eqref{risk}.
	\end{Rmk2}
	
	\begin{Rmk2} [Extension to nonlinear hypotheses]
		By using the tools of Section~\ref{subsec:Kernelization}, Theorems~\ref{Thrm:Error} and~\ref{Thrm:Risk} generalize immediately to nonlinear hypotheses that range over a RKHS. Specifically, we can formulate lifted error and risk estimation problems where the inputs $\x\in \XX$ are replaced with features $\x_\HH\in\HH$, while each {\em nonlinear} hypothesis $h\in\HH$ over the input space $\XX$ is identified with a {\em linear} hypothesis $h_\HH\in\HH$ over the feature space $\HH$ through the identity $h_\HH(\x_\HH)=\inner{h}{\x_\HH}_\HH$. Tractability is again facilitated by Theorem~\ref{Thrm:rep}, which allows us to focus on finitely parameterized hypotheses of the form $h(\x) = \sum_{i=1}^N \beta_i k(\x, \widehat \x_i)$.
	\end{Rmk2}
	
	
	\section{Numerical Results}
	\label{sec:Simulation}
	We showcase the power of regularization via mass transportation in various applications based on standard datasets from the literature. All optimization problems are implemented in Python and solved with Gurobi~7.5.1 All experiments are run on an Intel XEON CPU (3.40GHz), and the corresponding codes are made publicly available at \href{https://github.com/sorooshafiee/Regularization-via-Transportation}{https://github.com/sorooshafiee/Regularization-via-Transportation}.

	\subsection{Regularization with Pre-selected Parameters}
	\label{sec:pre-selected-parameters}
	We first assess how the out-of-sample performance of a distributionally robust support vector machine (DRSVM) is impacted by the choice of the Wasserstein radius~$\eps$, the cost~$\kappa$ of flipping a label, and the kernel function~$k$. To this end, we solve three binary classification problems from the MNIST database~\citep{MNIST} targeted at distinguishing pairs of similar handwritten digits (1-vs-7, 3-vs-8, 4-vs-9). In the first experiment we optimize over linear hypotheses and use the separable transporation metric~\eqref{metric} involving the $\infty$-norm on the input space. \change{All results are averaged over 100 independent trials. In each trial, we randomly select 500 images to train the DRSVM model~\eqref{HLP} and use the remaining $12{,}000$ images for testing.} The correct classification rate (CCR) on the test data, averaged across all 100 trials, is visualized in Figure~\ref{3figs} as a function of the Wasserstein radius $\eps$ for each $\kappa\in\{0.1,0.25,0.5,0.75,\infty\}$. The best out-of-sample CCR is obtained for $\kappa= 0.25$ uniformly across all Wasserstein radii, and performance deteriorates significantly when $\kappa$ is reduced or increased. Recall from Remark~\ref{rem:drclassification->regularization} that, as $\kappa$ tends to infinity, the DRSVM reduces to the classical regularized support vector machine (RSVM) with $1$-norm regularizer. Thus, the results of Figure~\ref{3figs} indicate that regularization via mass transportation may be preferable to classical regularization in terms of the maximum achievable out-of-sample CCR. More specifically, we observe that the out-of-sample CCR of the best DRSVM ($\kappa= 0.25$) displays a slightly higher and significantly wider plateau around the optimal regularization parameter $\eps$ than the classical RSVM ($\kappa=\infty$). This suggests that the regularization parameter in DRSVMs may be easier to calibrate from data than in RSVMs, a conjecture that will be put to scrutiny in Section~\ref{sec:learned-parameters}. Finally, Figure~\ref{3figs} reveals that the standard (unregularized) support vector machine (SVM), which can be viewed as a special case of the DRSVM with $\eps=0$, is dominated by the RSVMs and DRSVMs across a wide range of regularization parameters.\footnote{\change{By slight abuse of notation, we use the acronym `SVM' to refer to the unregularized empirical hinge loss minimization problem even though the traditional formulations of the support vector machine involve a Tikhonov regularization term.}} Note that the SVM problem~\eqref{HLP} with $\eps=0$ reduces to a linear program and may thus suffer from multiple optimal solutions. This explains why the limiting out-of-sample CCR for $\eps\downarrow 0$ changes with $\kappa$. 
	
	\subsection{Regularization with Learned Parameters}
	\label{sec:learned-parameters}
	It is easy to read off the best regularization parameters $\eps$ and $\kappa$ from the charts in Figure~\ref{3figs}. As these charts are constructed from more than 12{,}000 test samples, however, they are not accessible in the training phase. In practice, $\eps$ and $\kappa$ must be calibrated from the training data alone. This motivates us to revisit the three classification problems from Section~\ref{sec:pre-selected-parameters} using a fully data-driven procedure, where all free model parameters are calibrated via $5$-fold cross validation; see, {\em e.g.},~\citep[\S~4.3.3]{abu2012learning}. Moreover, to evaluate the benefits of kernelization, we now solve a generalized DRSVM model of the form~\eqref{eq:kernelized-classification-problem}, which implicitly optimizes over all nonlinear hypotheses in some RKHS. As explained in Section~\ref{subsec:Kernelization}, kernelization necessitates the use of the separable transportation metric~\eqref{metric} with the Euclidean norm on the input space. 
	
	All free parameters of the resulting DRSVM model are restricted to finite search grids in order to ease the computational burden of cross validation. Specifically, we select the Wasserstein radius $\eps$ from within $\{ b \cdot 10^e : b \in \{1,5\}, e \in \{1, 2, 3, 4\}  \}$ and the label flipping cost $\kappa$ from within $\{0.1,0.25,0.5,0.75,\infty\}$. Moreover, we select the degree $d$ of the polynomial kernel from within $\{1,2,3,4,5\}$ and the peakedness parameter $\gamma$ of the Laplacian and Gaussian kernels from within $\{\frac{1}{100}, \frac{1}{81}, \frac{1}{64}, \frac{1}{49}, \frac{1}{36}, \frac{1}{25}\}$. Otherwise, we use the same experimental setup as in Section~\ref{sec:pre-selected-parameters}. Table~\ref{table-kernel} reports the averages and standard deviations of the CCR scores on the test data based on 100 independent trials. We observe that the DRSVM ($\eps$, $\kappa$, $d$, and $\gamma$ learned by cross validation) outperforms the RSVM ($\eps$, $d$ and $\gamma$ learned by cross validation, $\kappa=\infty$) consistently across all tested kernel functions (Polynomial, Laplacian, Gaussian). Note that the DRSVM with polynomial kernel subsumes the non-kernelized DRSVM~\eqref{HLP} as a special case because the polynomial kernel with $d=1$ coincides with the linear kernel.

	\begin{figure*}
		\centering
		\hspace{-6pt} \subfigure[1-vs-7]{\label{3figs-a} \includegraphics[width=0.31\columnwidth]{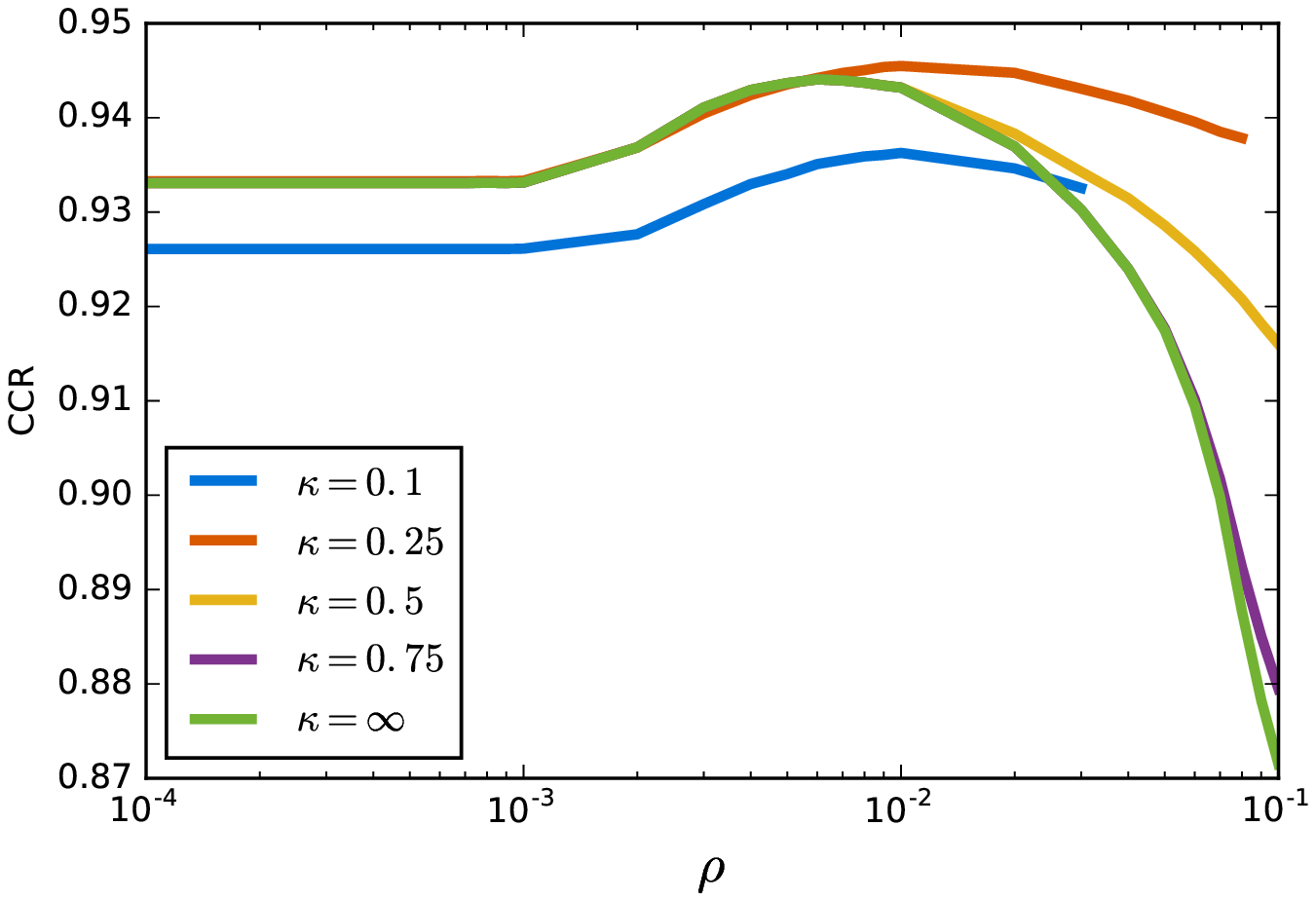}} \hspace{0pt}
		\subfigure[3-vs-8]{\label{3figs-b} \includegraphics[width=0.31\columnwidth]{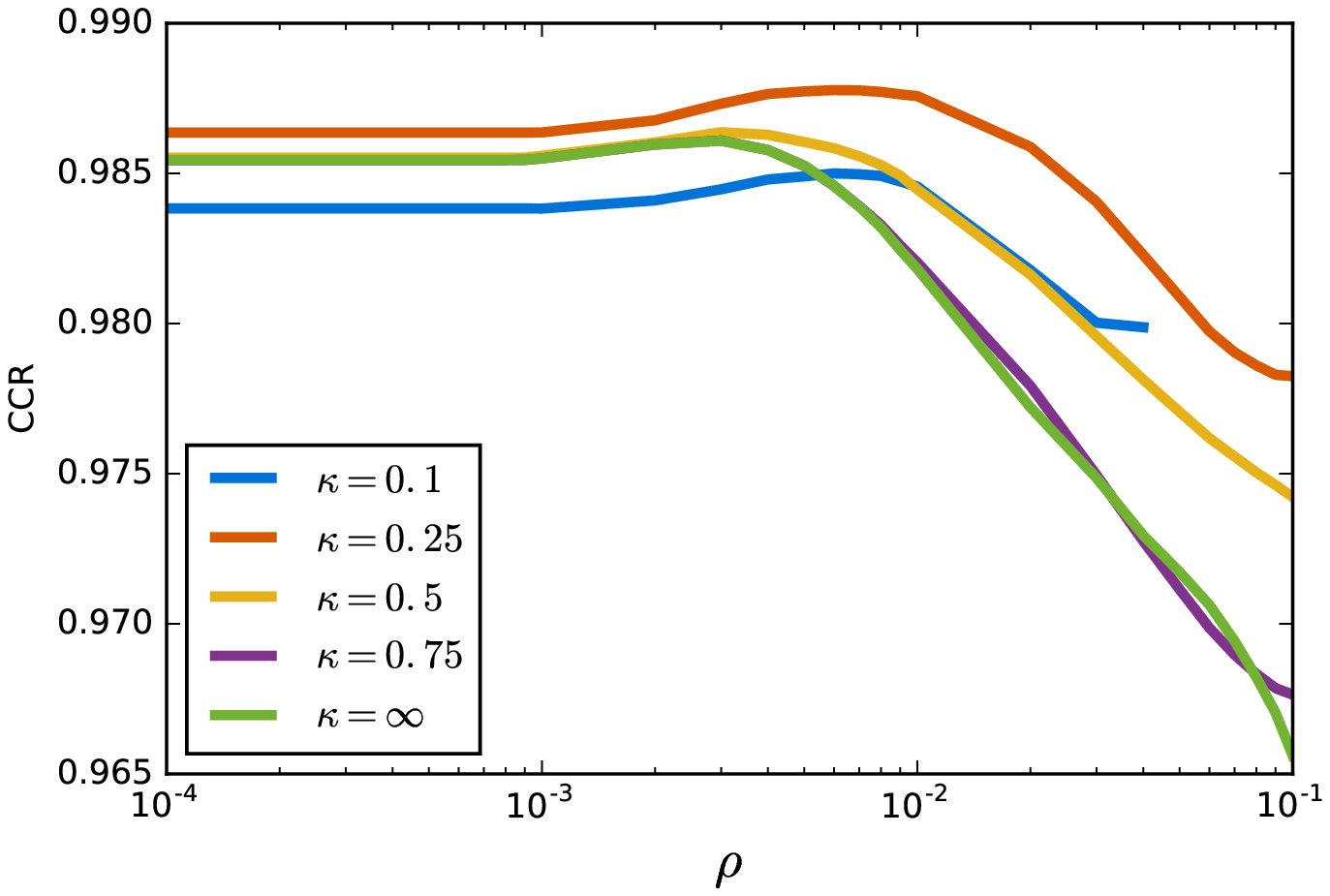}} \hspace{0pt}
		\subfigure[4-vs-9]{\label{3figs-c} \includegraphics[width=0.31\columnwidth]{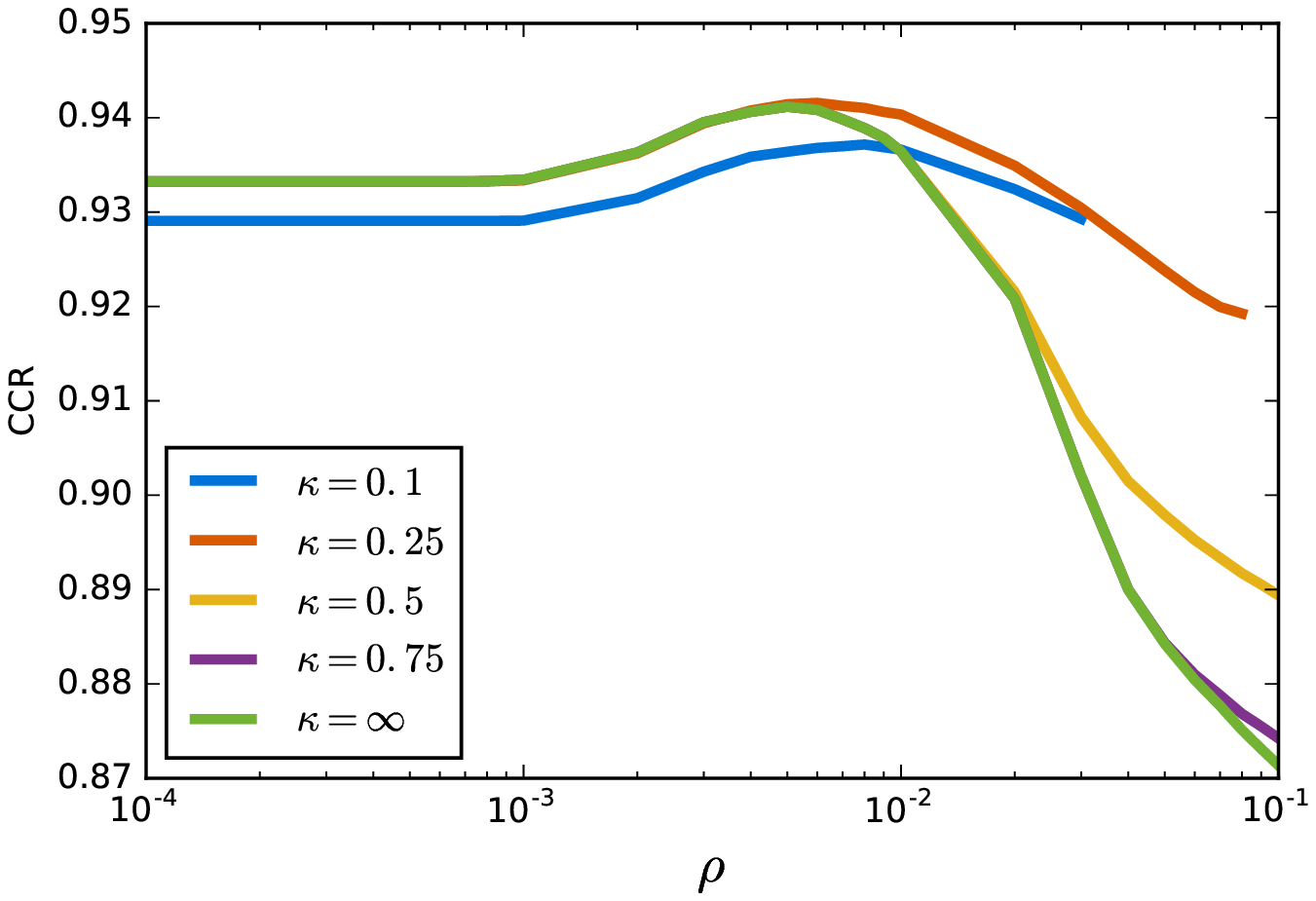}} \hspace{0pt}
		\caption{Average out-of-sample CCR scores of the DRSVM with pre-selected parameters.}
		\label{3figs}
	\end{figure*}

	\begin{table} 
		\centering
		\caption{Average out-of-sample CCR scores of the DRSVM with learned parameters.}
		\vspace{2ex}
		\bgroup
		\def\arraystretch{1.1}
		\begin{tabular}{|l|cc|cc|cc|}
			\cline{2-7}
			\multicolumn{1}{c|}{} & \multicolumn{2}{c|}{Polynomial} &
			\multicolumn{2}{c|}{Laplacian} & \multicolumn{2}{c|}{Gaussian} \\ 
			\cline{2-7}
			\multicolumn{1}{c|}{} & RSVM & DRSVM & RSVM & DRSVM & RSVM & DRSVM \\ \hline 
			1-vs-7& $ 98.9 \pm 0.2 $ & $ 99.1 \pm 0.2 $ & $ 98.3 \pm 0.5 $ & $ 98.5 \pm 0.4 $ & $ 99.1 \pm 0.2 $ & $ 99.2 \pm 0.2 $ \\ \hline 
			3-vs-8& $ 95.2 \pm 0.4 $ & $ 97.0 \pm 0.4 $ & $ 96.5 \pm 0.4 $ & $ 96.8 \pm 0.4 $ & $ 97.0 \pm 0.3 $ & $ 97.2 \pm 0.3 $ \\ \hline 
			4-vs-9& $ 95.0 \pm 0.4 $ & $ 96.5 \pm 0.4 $ & $ 95.8 \pm 0.6 $ & $ 96.0 \pm 0.6 $ & $ 96.8 \pm 0.4 $ & $ 96.9 \pm 0.4 $ \\ \hline 
		\end{tabular} \label{table-kernel}
		\egroup
	\end{table}
	
	In the third experiment, we assess the out-of-sample performance of the DRSVM~\eqref{HLP} for different transportation metrics on 10 standard datasets from the UCI repository~\citep{UCI2013}, \change{each containing up to $1{,}000$ samples}. Specifically, we use different variants of the separable transportation metric~\eqref{metric}, where distances in the input space are measured via a $p$-norm with $p\in\{1,2,\infty\}$. We focus exclusively on linear hypotheses because the kernelization techniques described in Section~\ref{subsec:Kernelization} are only available for $p=2$. The DRSVM is compared against the standard (unregularized) SVM and the RSVM with $q$-norm regularizer ($\frac{1}{p}+\frac{1}{q}=1$). All results are averaged across 100 independent trials. \change{In each trial, we randomly select 75\% of the data for training and the remaining 25\% for testing.} The inputs are first standardized to zero mean and unit variance along each coordinate axis. The Wasserstein radius $\eps$ and the label flipping cost $\kappa$ in the DRSVM as well as the regularization weight $\eps$ in the RSVM are estimated via stratified 5-fold cross~validation. 
	
	Classifier performance is now quantified in terms of the {\em receiver operating characteristic} (ROC) curve, which plots the true positive rate (percentage of correctly classified test samples with true label $y=1$) against the false positive rate (percentage of {\em in}correctly classified test samples with true label $y=-1$) by sweeping the discrimination threshold. Specifically, we use the {\em area under the ROC curve} (AUC) as a measure of classifier performance. AUC does not bias on the size of the test data and is a more appropriate performance measure than CCR in the presence of an unbalanced label distribution in the training data. We emphasize that most of the considered datasets are indeed imbalanced, and thus a high CCR score would not necessarily provide evidence of superior classifier performance. The averages and standard deviations of the AUC scores based on 100 trials are reported in Table~\ref{table_class_norms}. The results suggest that the DRSVM outperforms the RSVM in terms of AUC for all norms by about the same amount by which the RSVM outperforms the classical hinge loss minimization, consistently across all datasets.

	\begin{table} [h]
		\centering
		\caption{Average out-of-sample AUC scores of the SVM, RSVM and DRSVM.}\vspace{2ex}
		\bgroup
		\def\arraystretch{1.1}
		\begin{tabular}{|l|c|cc|cc|cc|}
			\cline{3-8}
			\multicolumn{2}{c|}{} & \multicolumn{2}{c|}{$p=\infty$ / $q=1$} & 
			\multicolumn{2}{c|}{$p=2$ / $q=2$} & \multicolumn{2}{c|}{$p=1$ / $q=\infty$} \\ 
			\cline{2-8}
			\multicolumn{1}{c|}{} & SVM & RSVM & DRSVM & RSVM & DRSVM & RSVM & DRSVM \\ \hline 
			Australian& $ 91.6 \pm 3.0 $ & $ 91.5 \pm 3.2 $ & $ 92.0 \pm 2.5 $ & $ 92.0 \pm 2.2 $ & $ 92.3 \pm 2.0 $ & $ 91.9 \pm 2.8 $ & $ 92.2 \pm 2.4 $ \\ \hline 
			Blood transfusion& $ 73.7 \pm 3.8 $ & $ 73.8 \pm 3.8 $ & $ 75.5 \pm 3.8 $ & $ 74.9 \pm 3.5 $ & $ 75.5 \pm 3.7 $ & $ 75.4 \pm 3.4 $ & $ 75.4 \pm 3.7 $ \\ \hline 
			Climate model& $ 93.8 \pm 3.9 $ & $ 94.4 \pm 4.0 $ & $ 94.3 \pm 4.0 $ & $ 94.3 \pm 3.8 $ & $ 94.0 \pm 4.0 $ & $ 93.6 \pm 3.9 $ & $ 93.9 \pm 4.0 $ \\ \hline 
			Cylinder& $ 72.0 \pm 3.7 $ & $ 71.2 \pm 4.0 $ & $ 72.1 \pm 4.1 $ & $ 71.3 \pm 4.0 $ & $ 71.8 \pm 3.8 $ & $ 71.5 \pm 3.8 $ & $ 72.2 \pm 3.7 $ \\ \hline 
			Heart& $ 90.4 \pm 2.7 $ & $ 90.1 \pm 2.7 $ & $ 90.3 \pm 2.7 $ & $ 90.6 \pm 2.6 $ & $ 90.9 \pm 2.5 $ & $ 90.5 \pm 2.6 $ & $ 90.7 \pm 2.6 $ \\ \hline 
			Ionosphere& $ 85.0 \pm 4.9 $ & $ 89.7 \pm 4.5 $ & $ 89.2 \pm 4.3 $ & $ 90.4 \pm 3.7 $ & $ 89.9 \pm 3.9 $ & $ 86.0 \pm 4.9 $ & $ 87.2 \pm 4.8 $ \\ \hline 
			Liver disorders& $ 60.5 \pm 0.0 $ & $ 61.1 \pm 0.7 $ & $ 61.7 \pm 0.7 $ & $ 61.2 \pm 0.3 $ & $ 61.7 \pm 0.5 $ & $ 61.1 \pm 0.4 $ & $ 61.8 \pm 0.5 $ \\ \hline 
			QSAR& $ 90.5 \pm 1.5 $ & $ 90.5 \pm 1.6 $ & $ 91.0 \pm 1.6 $ & $ 90.5 \pm 1.5 $ & $ 91.2 \pm 1.5 $ & $ 90.6 \pm 1.5 $ & $ 91.1 \pm 1.6 $ \\ \hline 
			Splice& $ 92.1 \pm 0.0 $ & $ 93.0 \pm 0.4 $ & $ 93.1 \pm 0.1 $ & $ 92.5 \pm 0.1 $ & $ 92.6 \pm 0.1 $ & $ 92.0 \pm 0.1 $ & $ 92.5 \pm 0.1 $ \\ \hline 
			Thoracic surgery& $ 61.7 \pm 7.1 $ & $ 61.5 \pm 6.5 $ & $ 64.6 \pm 6.6 $ & $ 64.4 \pm 6.4 $ & $ 64.3 \pm 7.0 $ & $ 64.0 \pm 6.3 $ & $ 64.6 \pm 6.3 $ \\ \hline 
		\end{tabular} \label{table_class_norms}
		\egroup
	\end{table}

\change{
	\subsection{Multi-Label Classification}
	The aim of object recognition is to discover instances of particular object classes in digital images. We now describe an object recognition experiment based on the PASCAL VOC 2007 dataset \cite{everingham2010pascal} consisting of $9{,}963$ images, which are pre-partitioned into $25\%$ for training, $25\%$ for validation and $50\%$ for testing. Each image is annotated with 20 binary labels corresponding to 20 given object categories (the $n$-th label is set to $+1$ if the image contains the $n$-th object and to $-1$ otherwise). A multi-label classifier is a function that predicts all labels of an unlabelled input image. The ability of a classifier to detect objects belonging to any fixed category is measured by the {\em average precision} (AP), which is defined in~\cite{everingham2010pascal} as (a proxy for) the area under the classifier's precision-recall curve. The overall performance of a classifier is quantified by the {\em mean average precision} (mAP), that is, the arithmetic mean of the AP scores across all object categories. 
	
	In the first scenario, we train a separate binary RSVM and DRSVM classifier for each of the 20 object categories. This classifier predicts whether an object of the respective category appears in the input image. At the beginning we preprocess the entire dataset by resizing each image to $256\times 256$ pixels and extracting the central patch of $244\times 244$ pixels. As shown in \citep{chatfield2014return, donahue2014decaf, zeiler2014visualizing}, the features generated by the penultimate layer of a deep convolutional neural network trained on a large image dataset provide a powerful image descriptor. Using the ALEXNET neural network trained on the ImageNet dataset~\citep{krizhevsky2012imagenet}, we can thus compress each (preprocessed) image of the PASCAL VOC 2007 dataset into 1{,}000 meaningful features. We normalize these feature vectors to lie on the unit sphere. When training the RSVM and DRSVM classifiers, we can thus work with these feature vectors instead of the corresponding images. Moreover, we restrict attention to linear hypotheses and assume that transportation distances in the input-output space are measured by the separable metric~\eqref{metric} with the Euclidean norm on the input space. We tune the Wasserstein radius $\rho \in\{ b \cdot 10^e : b \in \{1,\ldots,9\}, e \in \{-2,-3,-4\}  \}$ and the label flipping cost $\kappa  \in\{ 0{.}1,0{.}2,\ldots,1,\infty\}$ via the holdout method using the validation data. As usual, we fix $\kappa=\infty$ for RSVM. Table~\ref{table_voc} reports the AP scores of the RSVM and DRSVM models for each object category. The ensemble of all 20 binary RSVM or DRSVM classifiers, respectively, can be viewed as a na\"ive multi-label classifier that predicts all labels of an image. As DRSVM outperforms RSVM on an object-by-object basis, it also wins in terms of mAP.
	
	In the second scenario, we construct a proper multi-label classifier by fine-tuning the last layer of the pre-trained ALEXNET network. To this end, we replace the original $M$-th layer of the network with a new fully connected layer characterized by a parameter matrix $\bm W_M\in\RR^{20\times 1000}$, and we set $\sigma_M$ to the Sigmoid activation function. The resulting classifer outputs for each of the 20 object categories a probability that an object from the respective category appears in the input image. The quality of a classifier (which is encoded by $\bm W_M$) is measured by the cross-entropy loss function, which naturally generalizes the logloss to multiple labels. The resulting empirical loss minimization problem is enhanced with a regularization term proportional to $\|\bm W_M\|_{1,1}$ (Lasso), $\|\bm W_M\|_{F}^2$ (Tikhonov), $\|\bm W_M\|_{1}$ (MACS), $\|\bm W_M\|_{2}$ (Spectral) or $\|\bm W_M\|_{\infty}$ (MARS). By using similar arguments as in Section~\ref{subsec:neural}, one can show that the empirical cross-entropy with MACS, Spectral or MARS regularization term overestimates the worst-case expected cross-entropy over all distributions of $(\bm x_{M},\bm x_{M+1})$ in a Wasserstein ball provided that the transportation cost is given by 
	\[
	d((\x_M,\bm x_{M+1}),(\x_M', \bm x_{M+1}')) = \|\x_M-\x_M'\|_p + \kappa \mathds{1}_{ \{ \x_{M+1} \neq \x_{M+1}' \} }
	\]
	for $\kappa=\infty$, whenever $p=1$, $p=2$ or $p=\infty$, respectively. Thus, the MACS, Spectral and MARS regularization terms admit a distributionally robust interpretation.

	We use the stochastic proximal gradient descent algorithm of Section~\ref{subsec:neural} to tune $\bm W_M$, including an additional momentum term with weight~$0{.}9$. As in~\citep{krizhevsky2012imagenet}, we split the training phase into 100 epochs, each corresponding to a complete pass through the training dataset in a random order. As the ALEXNET requires input images of size $244\times 244$, in each iteration we extract a random patch of $244 \times 244$ pixels from the current image and flip it horizontally at random. This procedure effectively augments the training dataset. The initial step size is set to $10^{-3}$ and then reduced by a factor of $10$ after every $7$ epochs. The algorithm terminates after $100$ epochs. We preprocess the images in the validation and test datasets as in Scenario~1 and tune the regularization weights via the holdout method using the validation data. 
	Table~\ref{table_voc} reports the AP and mAP scores of the different classifiers that were tested. These results suggest that fine-tuning the last layer of a pre-trained neural network may improve classifier performance. We observe that the spectral norm regularizer, which has a distributionally robust interpretation, consistently outperforms almost all other methods. For further details on the experimental setup (such as the exact search grids for all hyperparameters) we refer to the code publicized on Github.

	\begin{table}
		\centering
		\caption{AP scores of different multi-label classifiers.}\vspace{2ex}
		\bgroup
		\def\arraystretch{1.1}
		\begin{tabular}{|c|c|c|c|c|c|c|c|c|}
			\cline{2-8}
			\multicolumn{1}{c|}{} & \multicolumn{2}{c|}{Scenario 1} & \multicolumn{5}{c|}{Scenario 2} \\
			\cline{2-8}
			\multicolumn{1}{c|}{} & RSVM & DRSVM & Lasso & Tikhonov & MACS & Spectral & MARS \\ \hline
			\includegraphics[width=0.5cm,height=0.4cm,keepaspectratio]{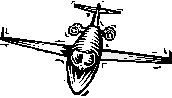} & 84.80  & 84.85  & 85.50  & 84.26 & 84.35 & 83.89 & 85.53 \\
			\includegraphics[width=0.5cm,height=0.4cm,keepaspectratio]{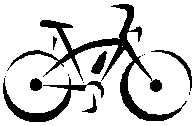} & 78.54 & 78.49 & 76.18 & 76.55 & 76.37 & 75.67 & 76.22 \\
			\includegraphics[width=0.5cm,height=0.4cm,keepaspectratio]{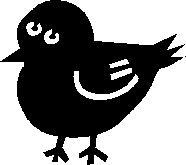} & 82.19 & 82.19 & 83.37 & 83.11 & 83.52 & 84.19 & 83.08 \\
			\includegraphics[width=0.5cm,height=0.4cm,keepaspectratio]{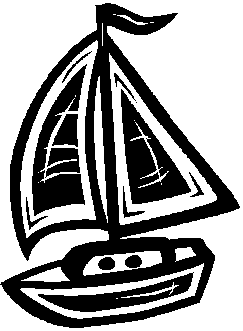} & 79.55 & 79.56 & 77.65 & 78.04 & 77.70 & 78.92 & 77.53 \\
			\includegraphics[width=0.5cm,height=0.4cm,keepaspectratio]{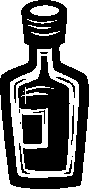} & 37.52 & 37.98 & 38.10  & 39.73 & 39.53 & 38.75 & 38.10 \\
			\includegraphics[width=0.5cm,height=0.4cm,keepaspectratio]{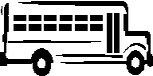} & 72.05 & 72.03 & 70.05 & 69.43 & 69.17 & 70.28 & 69.77 \\
			\includegraphics[width=0.5cm,height=0.4cm,keepaspectratio]{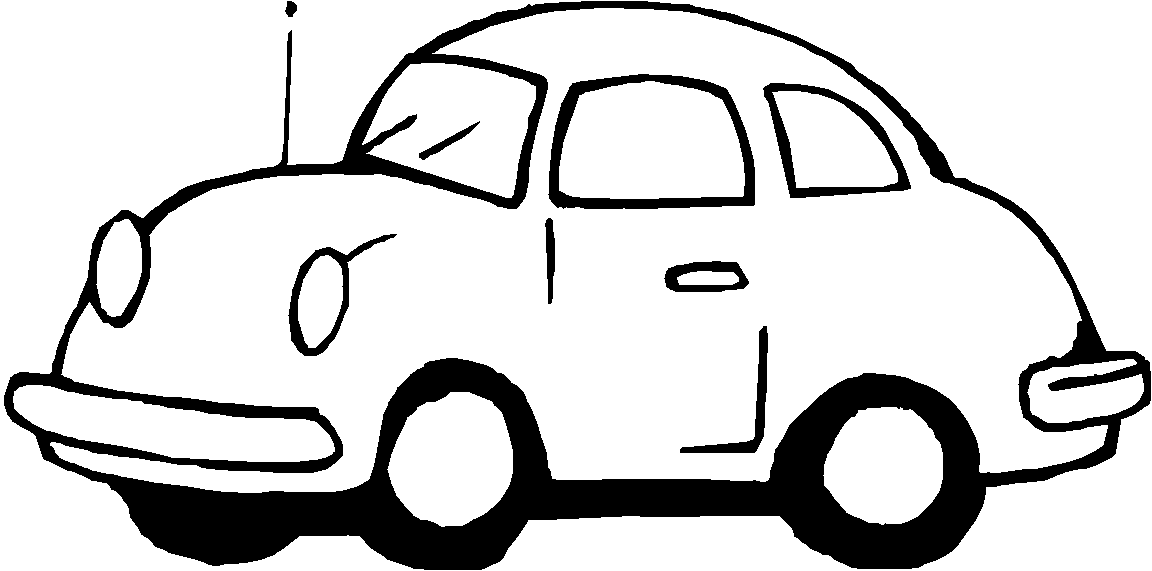} & 83.08 & 83.12 & 83.10 & 83.67 & 83.50 & 82.92 & 83.17 \\
			\includegraphics[width=0.5cm,height=0.4cm,keepaspectratio]{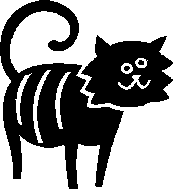} & 80.60 & 80.56 & 79.87 & 79.86 & 80.07 & 79.85 & 79.82 \\
			\includegraphics[width=0.5cm,height=0.4cm,keepaspectratio]{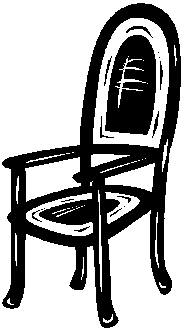} & 54.64 & 54.64 & 54.40 & 54.76 & 53.94 & 54.72 & 54.55 \\
			\includegraphics[width=0.5cm,height=0.4cm,keepaspectratio]{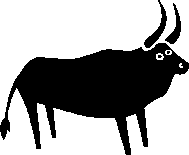} & 47.82 & 53.13 & 52.06 & 52.10 & 51.62 & 55.54 & 51.19 \\
			\includegraphics[width=0.5cm,height=0.4cm,keepaspectratio]{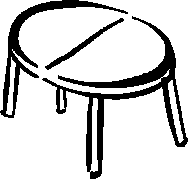} & 54.26 & 58.88 & 63.41 & 65.23 & 65.15 & 66.79 & 62.95 \\
			\includegraphics[width=0.5cm,height=0.4cm,keepaspectratio]{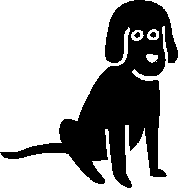} & 75.81 & 75.81 & 76.92 & 77.39 & 77.26 & 76.54 & 76.97 \\
			\includegraphics[width=0.5cm,height=0.4cm,keepaspectratio]{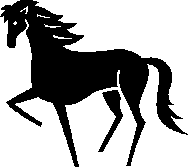} & 82.74 & 82.72 & 82.17 & 81.89 & 81.6 & 80.81 & 81.9 \\
			\includegraphics[width=0.5cm,height=0.4cm,keepaspectratio]{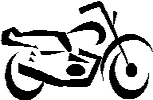} & 72.69 & 72.88 & 74.70 & 75.41 & 74.76 & 76.48 & 74.31 \\
			\includegraphics[width=0.5cm,height=0.4cm,keepaspectratio]{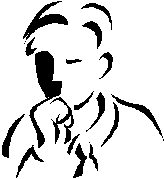} & 90.36 & 90.36 & 90.07 & 90.30 & 90.22 & 90.35 & 90.09 \\
			\includegraphics[width=0.5cm,height=0.4cm,keepaspectratio]{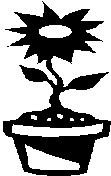} & 50.22 & 51.90 & 50.20 & 50.18 & 50.27 & 51.39 & 50.20 \\
			\includegraphics[width=0.5cm,height=0.4cm,keepaspectratio]{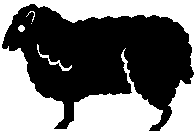} & 60.75 & 63.57 & 71.78 & 71.3  & 70.39 & 71.64 & 71.40 \\
			\includegraphics[width=0.5cm,height=0.4cm,keepaspectratio]{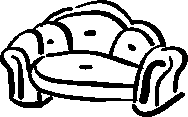} & 56.85 & 56.98 & 52.15 & 54.36 & 54.65 & 55.12 & 51.94 \\
			\includegraphics[width=0.5cm,height=0.4cm,keepaspectratio]{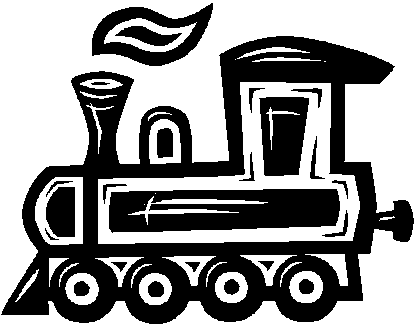} & 85.09 & 85.03 & 84.89 & 84.55 & 84.41 & 85.43 & 84.96 \\
			\includegraphics[width=0.5cm,height=0.4cm,keepaspectratio]{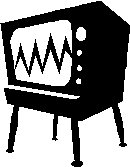} & 69.02 & 69.08 & 64.73 & 65.85 & 65.63 & 64.26 & 64.48 \\ \hline 
			mAP & 69.92 & 70.69 & 70.56 & 70.90 & 70.71 & 71.20 & 70.40  \\ \hline
		\end{tabular} \label{table_voc}
		\egroup
	\end{table}
	}

	\subsection{Generalization Bounds}
	The next experiment estimates the scaling behavior of the smallest Wasserstein radius that verifies the generalization bound~\eqref{finite:sample:guarantee} for the synthetic \texttt{threenorm} classification problem~\citep{breiman1996bias}. The experiment involves 1{,}000 simulation trials. In each trial we generate $N$ training samples for some $N\in \{10,\ldots,90\}\cup\{100,\ldots,1{,}000\}$ as well as $10^5$ test samples. Each sample $(\x,y)\in\RR^{20}\times\{-1,1\}$ is constructed as follows. The label $y$ is drawn uniformly from $\{-1,1\}$. If $y=-1$, then $\x$ is drawn from a standard multivariate normal distribution shifted by $(c, \ldots, c)$ or $(-c, \ldots, -c)$ with equal probabilities, where $c=2/\sqrt{20}$. If $\widehat y=1$, on the other hand, then $\x$ is drawn from a standard multivariate normal distribution shifted by $(c,-c,+c, \ldots, -c)$. 
	
	We now describe three different approaches to choose the Wasserstein radius $\eps$ in the DRSVM~\eqref{HLP} with transportation cost~\eqref{metric}, where $\kappa=\infty$ and $\|\cdot\|$ represents the $\infty$-norm on the input space. Throughout the experiment we use $P=\{ b \cdot 10^{-e} : b \in \{1,\ldots,10\}, e \in \{1, \ldots, 5\}  \}$ as the search space for $\eps$. Approach~1 (`cross validation') calibrates the Wasserstein radius as before via 5-fold cross validation based solely on the $N$ training samples. This approach reflects what would typically be done in practice. Approaches~2 and~3 both solve~\eqref{HLP} based on the empirical distribution induced by the $N$ training samples and select the Wasserstein radius using the $10^5$ test samples. Specifically, approach~2 (`optimal') chooses the Wasserstain radius that leads to the lowest test error, while approach~3 (`generalization bound') selects the smallest Wasserstein radius for which the optimal value of~\eqref{HLP} exceeds the expected loss on the test samples in at least $95\%$ of all trials, that is, it approximates the smallest Wasserstein radius that verifies the generalization bound~\eqref{finite:sample:guarantee} for $\eta=5\%$. As the test samples are not available in the training phase, the last two approaches are not implementable in practice, and we merely study them to gain insights. Figure~\ref{3figsp-a} visualizes all resulting Wasserstein radii as a function of $N$. Note that the radii obtained with the first two approaches are uncertain as they depend on a particular choice of the training samples. Figure~\ref{3figsp-a} thus only shows their averages across all simulation trials. In contrast, the radii obtained with the third approach depend on the training sample sets of all $1{,}000$ trials and are thus essentially deterministic.
	
	We observe that the Wasserstein radii of all three approaches decay approximately as $1/\sqrt{N}$, which is in line with the theoretical generalization bound of Theorem~\ref{Thrm:improved}. We expect this decay rate to be optimal because any faster decay would be in conflict with the central limit theorem. Note also that our results empirically confirm Theorem~\ref{Thrm:improved} even though we did not impose any restrictions on $\WW$ as dictated by Assumption~\ref{Asmp:improved}. This suggests that Theorem~\ref{Thrm:improved} might remain valid under weaker conditions.

	In the experiment underlying Figure~\ref{3figsp-c}, we first fix $\widehat \w$ to an optimal solution of~\eqref{HLP} for $\rho = 0.1$ and $N=100$. Figure~\ref{3figsp-c} shows the true risk $\risk (\widehat \w)$ and its confidence bounds given by Theorem~\ref{Thrm:Risk}. As expected, for $\rho=0$ the upper and lower bounds coincide with the empirical risk on the training data, which is a lower bound for the true risk on the test data due to over-fitting effects. As $\rho$ increases, the confidence interval between the bounds widens and eventually covers the true risk. For instance, at $\rho \approx 0.009$ the confidence interval is given by $[0.008, 0.162]$ and contains the true risk with probability $1-\eta = 95\%$.
	\begin{figure*}
		\centering
		\hspace{-6pt} \subfigure[Dependence of the Wasserstein radius on the number of training samples]{\label{3figsp-a} \includegraphics[width=0.31\columnwidth]{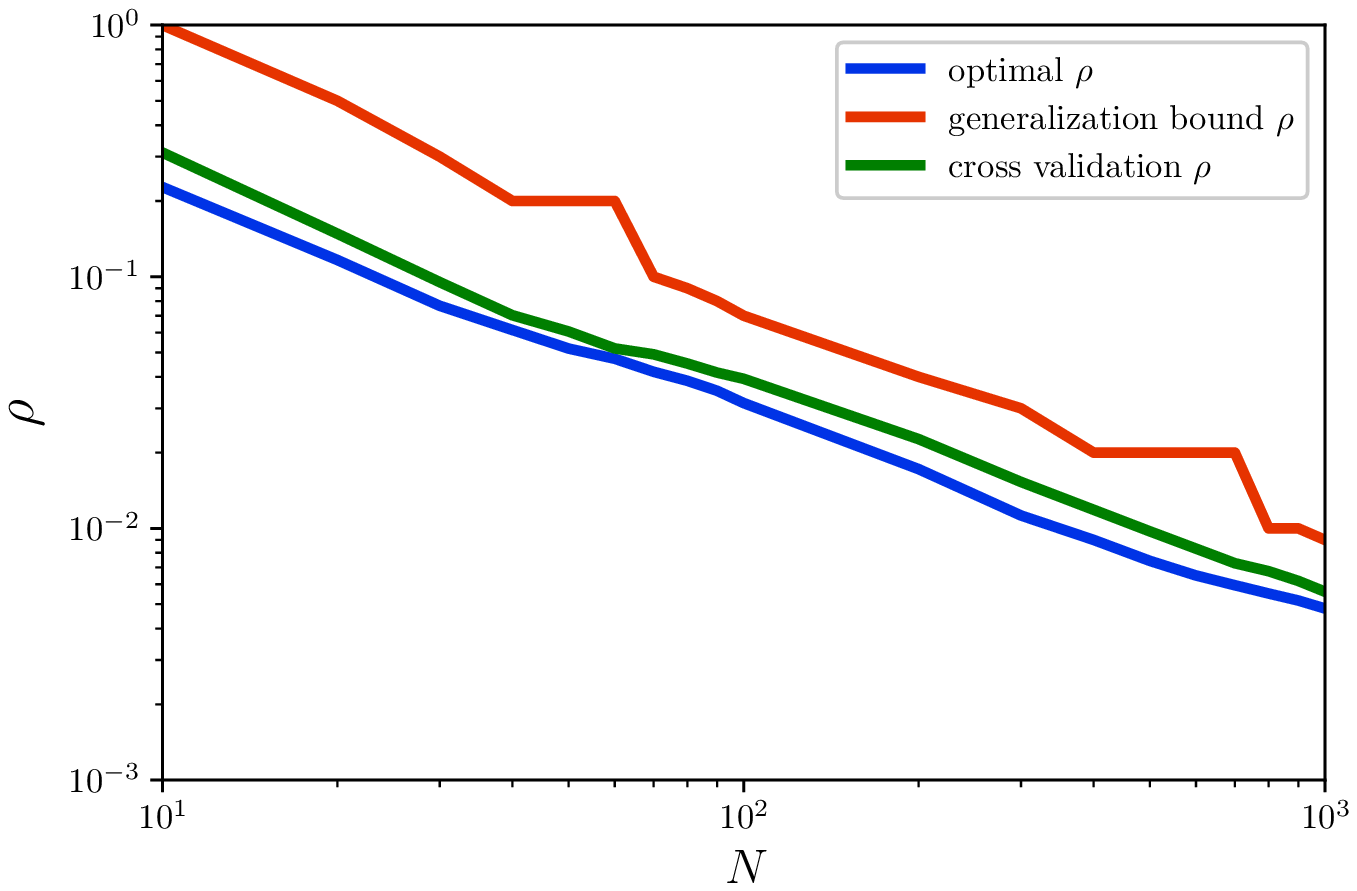}} \hspace{0pt}
		\subfigure[Confidence bounds on the risk]{\label{3figsp-c} \includegraphics[width=0.31\columnwidth]{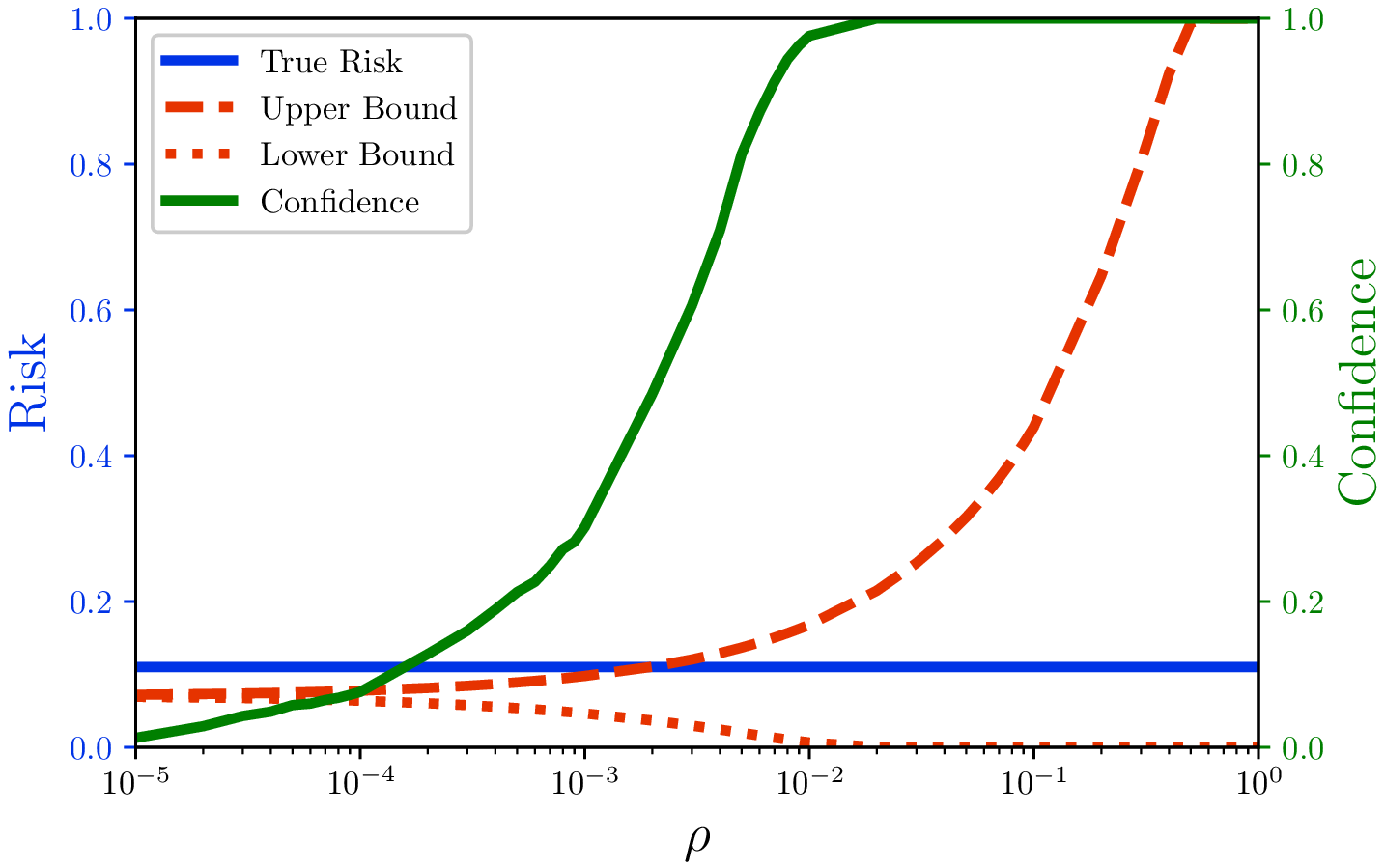}} \hspace{0pt}
		\caption{Results of the \texttt{threenorm} classification problem.}
		\label{3figs-prob}
	\end{figure*}

	\subsection{Worst-Case Distributions}
	Consider again the 3-vs-8 classification problems from the MNIST database~\citep{MNIST} and fix $\w^\star$ to an optimal solution of the empirical hinge loss minimization problem. The goal of the last experiment is to evaluate the {\em worst-case} hinge loss of $\w^\star$ for different Wasserstein radii $\eps\in\{0, 0.01, 0.05, 0.1, 0.5, 1\}$ and label flipping costs $\kappa\in\{0,\infty\}$ and to investigate the corresponding worst-case distributions, which are efficiently computable by virtue of Theorem~\ref{Thrm:worst-case:classification}(i). As each input constitutes a vector of pixels intensities between zero and one, we impose support constraints of the form $\bm C \bm x \leq \bm d$ with $\bm C = [\bm I, -\bm I]^\top$ and $\bm d = [\bm 1^\top, \bm 0^\top]^\top$.
	
	For illustrative purposes we only use the $N=10$ first datapoints in the MNIST dataset as training samples. Each training sample $\widehat{\x}_i$ corresponds to four discretization points ($\widehat{\x}_i + {\q^+}\opt_{ij}/{\alpha^+_{ij}}\opt$ and $\widehat{\x}_i + {\q^-}\opt_{ij}/{\alpha^-_{ij}}\opt$ for $j=1,2$) in the worst-case distribution obtained from~\eqref{worst:classification:linear}. We observe that for every $i$ exactly one out of these four points has probability $\frac{1}{N}$, while all others have probability 0. Figure~\ref{2figs-dist} depicts only those 10 discretization points that have nonzero probability for a fixed $\rho$ and $\kappa$. As expected, the perturbations of the training samples are more severe for larger Wasserstein radii. For $\kappa=\infty$ these scenarios must have the same labels as the corresponding training samples. For $\kappa=0$, on the other hand, the labels can be flipped at no cost (flipped labels are indicated by red frames). Each scenario group shown in Figure~\ref{2figs-dist} can thus be viewed as a worst-case training dataset for the corresponding Wasserstein radius and label flipping cost.
	\begin{figure*}
		\centering
		\hspace{-6pt} \subfigure[$\kappa = \infty$]{\label{2figsp-a} \includegraphics[width=0.35\columnwidth]{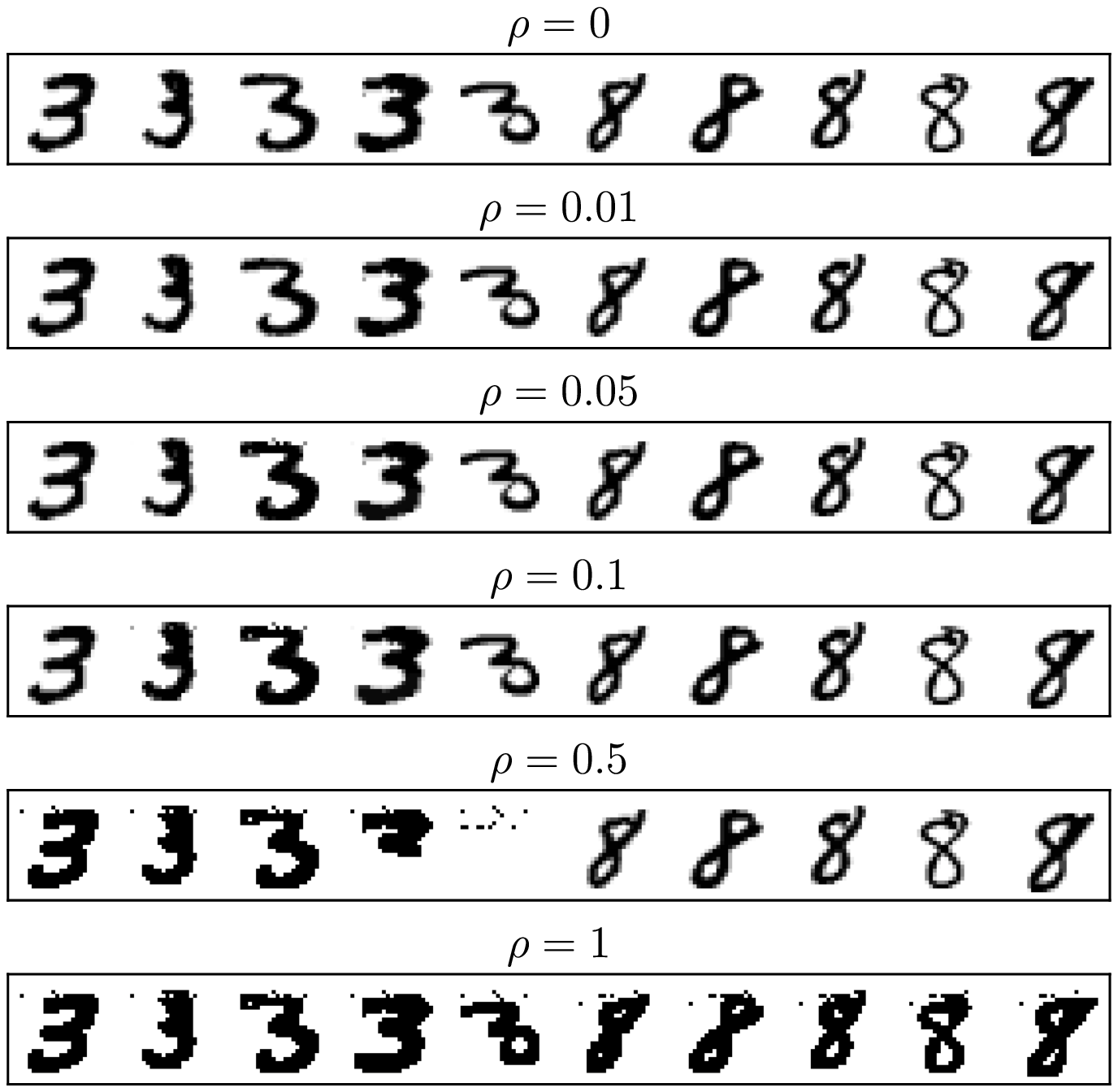}} \hspace{0pt}
		\subfigure[$\kappa = 0$]{\label{2figsp-b} \includegraphics[width=0.35\columnwidth]{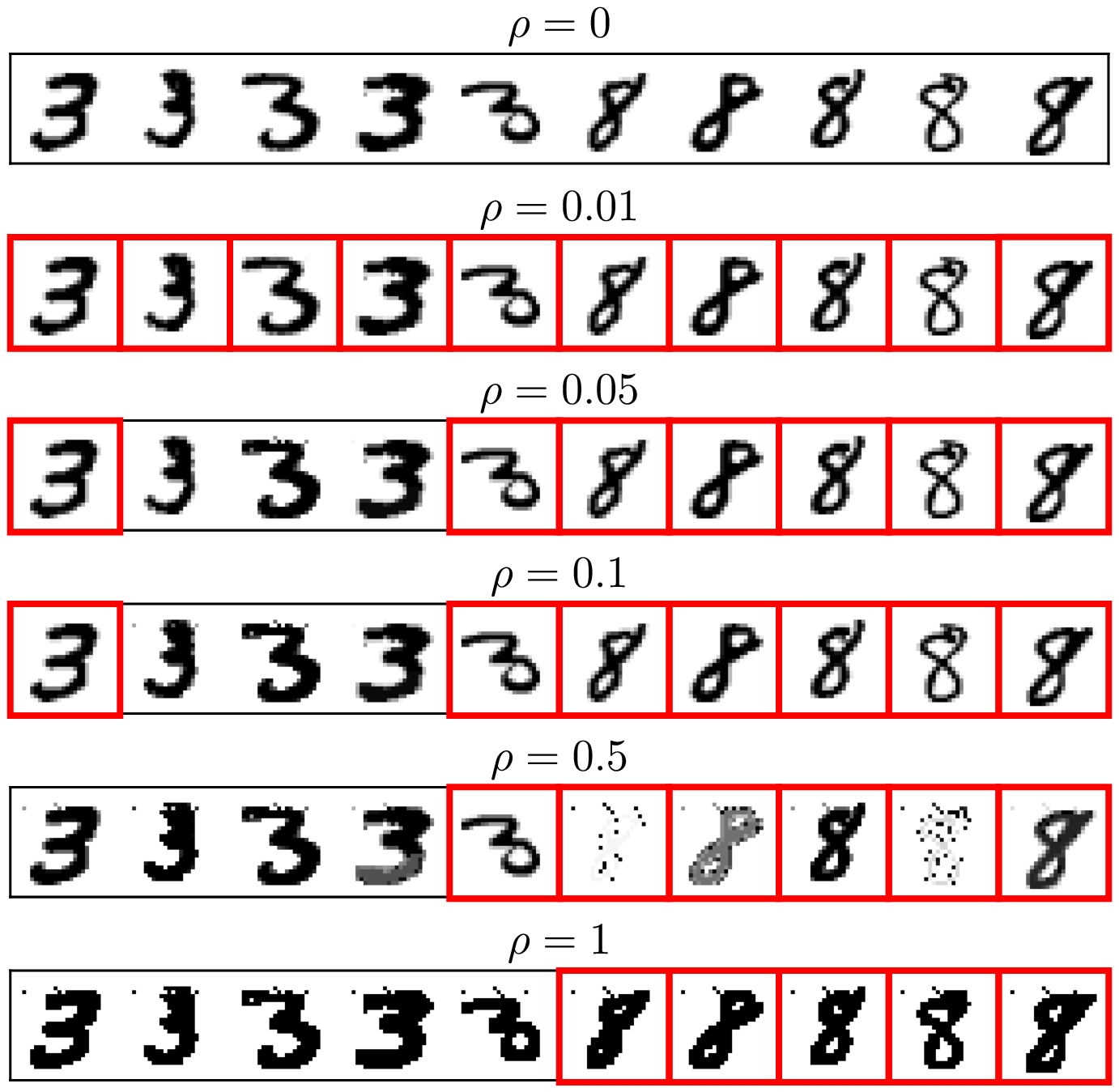}} \hspace{0pt}
		\caption{Discretization points (input images) of the worst-case distribution for different $\rho$ and $\kappa$. Red frames indicate that the corresponding labels are flipped under the worst-case distribution.}
		\label{2figs-dist}
	\end{figure*}

	
	\paragraph{\bf Acknowledgments}
	We gratefully acknoweldge financial support from the Swiss National Science Foundation under grants BSCGI0\_157733 and P2EZP2\_165264.

	
\appendix
\section{Appendix: Proofs} 
\label{sec:Appendix}
\subsection{Proofs of Section~\ref{sec:Tractable}}
The proof of Theorem~\ref{Thrm:Regression} requires three preparatory Lemmas. The first lemma is adapted from~\citep{MohKun-14} and asserts that the worst-case expectation over a Wasserstein ball can be re-expressed as a classical robust optimization problem. 

\begin{Lmm2} [Robust reformulation] \label{Lmm:main}
	Set $\widehat \xxi_i = (\widehat \x_i,\widehat y_i)$ for all $i\leq N$. For any measurable integrand $I(\xxi)$ that is bounded above by a Lipschitz continuous function we have
	\begin{align} \label{start:proof}
	\Sup _ {\QQ \in \Wball} \EE ^ \QQ  \left[ I(\xxi) \right] =  \Inf_{\lambda\geq 0} ~ \lambda \eps + \frac{1}{N}\sum_{i=1}^N \Sup_{\xxi \in \Xi} ~ I (\xxi) - \lambda d (\xxi,\widehat \xxi_i).
	\end{align}
\end{Lmm2}

\begin{proof} 
	By the definition of the Wasserstein ball we have
	\begin{align*}
	\Sup _ {\QQ \in \Wball} \EE ^ \QQ  \left[ I (\xxi) \right]~
	& = \optimize{
		\Sup _ { \Pi}  & \displaystyle \int _ {\Xi^2} I(\xxi)\, \Pi (\diff \xxi, \diff\xxi') \\[1ex]
		\text{s.t. }  & \Pi \text{ is a joint distribution of $\xxi$} \\ 
		& \text{and $\xxi'$ with marginals $\mathds{Q}$ and $\Pem$} \\[0.5ex]
		& \displaystyle \int_ {\Xi^2} d \left( \xxi,\xxi' \right) \, \Pi (\diff \xxi, \diff\xxi') \leq \eps} \\ 
	& = \optimize{
		\Sup_ {\QQ ^ i} & \displaystyle \frac{1}{N} \SumN \int_{\Xi}
		I(\xxi)\, \QQ^i(\diff \xxi) & \\
		\text{s.t. } & \displaystyle \int_{\Xi} \QQ ^ i (\diff \xxi) = 1\quad i \in [N]\\
		& \displaystyle \frac{1}{N} \SumN \int_{\Xi} d ( \xxi, \widehat \xxi_i )\, \QQ^i (\diff \xxi) \leq \eps.}
	\end{align*}
	Note that the integral of $I(\xxi)$ exists under every $\QQ \in \Wball$ because $I(\xxi)$ admits a Lipschitz continuous majorant. The last equality in the above expression holds because the marginal distribution of $\xxi'$ is the uniform distribution on the training samples, which implies that $\Pi$ is completely determined by the conditional distributions $\QQ ^ i$ of $\xxi$ given $\xxi' = \widehat \xxi_i$, that is, $\Pi(\diff \xxi,\diff \xxi' ) = \frac{1}{N}\sum_{i=1}^{N} \delta_{\widehat \xxi_i }(\diff \xxi') \QQ ^ i (\diff \xxi)$. The resulting generalized moment problem over the normalized measures $\QQ^i$ admits the semi-infinite dual
	\begin{align*} 
	\Sup _ {\QQ \in \Wball} \EE ^ \QQ  \left[ I(\xxi) \right] = 
	\optimize{
		\Inf_{\lambda,s_i} & \displaystyle \Obj & \\
		\text{s.t.}
		& \Sup_{\xxi \in \Xi} ~ I (\xxi) - \lambda d (\xxi,\widehat \xxi_i) \leq s_i & i \in [N] \\
		&\lambda \geq 0,}
	\end{align*}
	Strong duality holds for any $\eps > 0$ due to \citep[Proposition~3.4]{Shapiro-conic-duality}. The claim then follows by eliminating~$s_i$.
\end{proof}

\begin{Lmm2} \label{Lmm:linear}
	For any $a \in \RR$, $\Beta, \widehat{\bm \zeta} \in \RR^d$, $\gamma \in \RR_+$ and $\bm \zeta \in \ZZ$, where $\ZZ \subseteq \RR^d$ is a closed convex set, we have
	\begin{align*}
	\Sup_ {\bm \zeta \in \ZZ} ~ a \inner{\Beta}{\bm \zeta} - \gamma \| \bm \zeta - \widehat{\bm \zeta} \| = \optimize{
		\Inf_{\p}  & S_\ZZ(a \Beta - \p) + \inner{\p} {\widehat {\bm \zeta}}  \\
		\mathrm{s.t.} & \| \p \|_* \leq \gamma.}
	\end{align*} 
\end{Lmm2}
\begin{proof} 
	\begin{subequations}
		We have
		\begin{align*}
		\Sup_{\bm \zeta \in \ZZ} ~ a \inner{\Beta}{\bm \zeta} - \gamma \| \bm \zeta - \widehat{\bm \zeta} \|
		& = \Sup_ {\bm \zeta \in \ZZ} ~ \Inf_{ \| \p \|_* \leq \gamma} ~ a \inner{\Beta}{\bm \zeta} - \inner{\p}{\bm \zeta - \widehat{\bm \zeta}} \\
		& = \Inf_{ \| \p \|_* \leq \gamma} ~ \Sup_ {\bm \zeta \in \ZZ} ~ a \inner{\Beta}{\bm \zeta} - \inner{\p}{\bm \zeta - \widehat{\bm \zeta}} \\
		& = \Inf_{ \| \p \|_* \leq \gamma} ~ \Sup_ {\bm \zeta \in \RR^d} ~ \inner{a \Beta - \p}{\bm \zeta} - \delta_\ZZ(\bm \zeta) + \inner{\p}{\widehat{\bm \zeta}}  \\
		& = \Inf_{ \| \p \|_* \leq \gamma} ~ S_\ZZ(a \Beta - \p) + \inner{\p}{\widehat{\bm \zeta}},					
		\end{align*}
		where the first equality follows from the definition of the dual norm, the second equality holds due to the minimax theorem \citep[Proposition~5.5.4]{bertsekas2009convex}, and the last equality holds because the support function $S_\ZZ$ is the conjugate of the indicator function $\delta_\ZZ$. Thus, the claim follows.
	\end{subequations}
\end{proof}

\begin{Lmm2} \label{Lmm:convex}
	If $L(z)$ is a convex and Lipschitz continuous loss function, $\Beta, \widehat{\bm \zeta} \in \RR^d$ and $\gamma > 0$, then
	\begin{align*}
	\Sup_ {\bm \zeta \in \RR^d} ~ L (\inner{\Beta}{\bm \zeta}) - \gamma \| \bm \zeta - \widehat{\bm \zeta} \| = \optimize{
		L(\inner{\Beta}{\widehat{\bm \zeta}}) & \mathrm{if ~} \lip \| \Beta \|_* \leq \gamma \\ 
		+\infty & \mathrm{otherwise.}}
	\end{align*}
\end{Lmm2}			
\begin{proof} 
	Note that $L (\inner{\Beta}{\bm \zeta}) - \gamma \| \bm \zeta - \widehat{\bm \zeta} \|$ constitutes a difference of convex functions and may thus be neither convex nor concave in $\bm \zeta$. 
	In order to maximize this function, we re-write $I(\bm \zeta) = L (\inner{\Beta}{\bm \zeta})$ as an upper envelope of infinitely many affine functions. To this end, we express the conjugate of $I(\bm \zeta)$ as
	\begin{align*}
	I^*(\z) &= \Sup_{\bm \zeta} \inner{\z}{\bm \zeta} - L (\inner{\Beta}{\bm \zeta}) 
	= \Sup_{t, \, \bm \zeta} \left\{ \inner{\z}{\bm \zeta} - L(t) : t = \inner{\Beta}{\bm \zeta} \right\}
	= \Inf_{\theta} \left\{ L^*(\theta) : \theta \Beta = \z \right\},
	\end{align*}
	where the last equality follows from strong Lagrangian duality, which holds because Slater's constraint qualification is trivially satisfied in the absence of inequality constraints \citep[Proposition~5.3.1]{bertsekas2009convex}. Defining $\Theta=\{\theta\in\RR:L^*(\theta)<\infty\}$ as the effective domain of $L^*(\theta)$, we may then replace $\theta \in \RR$ with $\theta \in \Theta$ in the last expression. As $I(\bm \zeta)$ is convex and continuous, it coincides with its bi-conjugate, that is,
	\begin{align*}
	I(\bm \zeta) = I^{**}(\bm \zeta) = \sup_{\z} ~ \inner{\z}{\bm \zeta} - I^{*}(\z) = \sup_{\theta \in \Theta} ~ \inner{\theta \Beta}{\bm \zeta} - L^*(\theta).
	\end{align*}
	In other words, we have represented $I(\bm \zeta)$ as the upper envelope of infinitely many linear functions. Using this representation, we obtain	
	\begin{align*}
	\Sup_ {\bm \zeta} ~ I (\bm \zeta) - \gamma \| \bm \zeta - \widehat{\bm \zeta} \| 
	& = \Sup_ {\bm \zeta} ~ I^{**} (\bm \zeta) - \gamma \| \bm \zeta - \widehat{\bm \zeta} \| \\
	& = \Sup_ {\bm \zeta} ~ \Sup_{\theta \in \Theta} ~ \inner{\theta \Beta}{\bm \zeta} - L^{*}(\theta) - \gamma \| \bm \zeta - \widehat{\bm \zeta} \|  \\
	& = \Sup_{\theta \in \Theta} ~ \Sup_{\bm \zeta} ~ \Inf_{ \| \p \|_* \leq \gamma} ~ \theta \inner{\Beta}{\bm \zeta} - L^{*}(\theta) - \inner{\p}{\bm \zeta - \widehat{\bm \zeta}} \\
	& = \Sup_{\theta \in \Theta} ~ \Inf_{ \| \p \|_* \leq \gamma} ~ \Sup_ {\bm \zeta} ~ \inner{\theta \Beta - \p}{\bm \zeta} - L^{*}(\theta) + \inner{\p}{\widehat{\bm \zeta}}, \\
	\end{align*}
	where the last equality holds due to \citep[Proposition~5.5.4]{bertsekas2009convex}. Evaluating the maximization over $\bm \zeta$ yields
	\begin{align*}
	\Sup_ {\bm \zeta} ~ I (\bm \zeta) - \gamma \| \bm \zeta - \widehat{\bm \zeta} \| 
	& =
	\Sup_{\theta \in \Theta} ~ \Inf_{ \| \p \|_* \leq \gamma} ~
	\optimize{
		\inner{\p}{\widehat{\bm \zeta}} - L^{*}(\theta) & \text{if } \p = \theta \Beta \\
		+\infty & \text{otherwise}} \\
	&= \Sup_{\theta \in \Theta} ~ \optimize{
		\inner{\theta \Beta}{\widehat{\bm \zeta}} - L^{*}(\theta) & \text{if } \| \theta \Beta \|_* \leq \gamma \\
		+\infty & \text{otherwise}} \\
	&= \optimize{
		\Sup_{\theta \in \Theta} ~ \theta \inner{\Beta}{\widehat{\bm \zeta}} - L^{*}(\theta) & \text{if } \Sup_{\theta \in \Theta} ~ \| \theta \Beta \|_* \leq \gamma \\
		+\infty & \text{otherwise}} \\
	&= \optimize{
		L(\inner{\Beta}{\widehat{\bm \zeta}}) & \mathrm{if ~} \Sup_{\theta \in \Theta} |\theta| \cdot \| \Beta \|_* \leq \gamma \\
		+\infty & \mathrm{otherwise.}}
	\end{align*}
	Thus, the claim follows by noting that $\sup_{\theta} \{ |\theta|: L^*(\theta) < \infty \}$ represents the Lipschitz modulus of~$L$. 
\end{proof}

\begin{proof} [Proof of Theorem~\ref{Thrm:Regression}]
	To prove assertion (i), we apply Lemma~\ref{Lmm:main} to the integrand $f(\x, y) = L(\inner{\w}{\x} - y)$ with $L(z) = \max_{j \leq J} \{ a_j z + b_j \}$ to obtain
	\begin{align*}
	\Sup _ {\QQ \in \Wball} \!\!\!\!  \EE ^ \QQ \left[ \ell (\inner{\w}{\x}, y) \right] 
	&=  \displaystyle \Inf_{\lambda\geq 0} ~ \lambda\eps+\frac{1}{N}\sum_{i=1}^N  \Sup_{(\x,y) \in \Xi} ~ L (\inner{\w}{\x} - y) - \lambda \| (\x,y) - (\widehat \x_i,\widehat y_i)\| \\
	&= \Inf_{\lambda\geq 0} ~ \lambda\eps+\frac{1}{N}\sum_{i=1}^N \max_{j\leq J} \Sup_{(\x,y) \in \Xi} ~ a_j (\inner{\w}{\x} - y) + b_j - \lambda \| (\x,y) - (\widehat \x_i,\widehat y_i) \| \\
	&=  \optimize{ \Inf_{\lambda,\p_{ij}, u_{ij}} & \displaystyle \lambda\eps+\sum_{i=1}^N  \max_{j\leq J} S_\Xi(a_j \w - \p_{ij}, -a_j - u_{ij}) + \inner{\p_{ij}} {\widehat{\x}_i} + u_{ij} \widehat{y}_i + b_j  \\
		\text{s.t.} & \| (\p_{ij}, u_{ij}) \|_* \leq \lambda \qquad i \in [N], j \in [J],}
	\end{align*}			
	where the last equality follows from Lemma~\ref{Lmm:linear}. The claim now follows by introducing auxiliary epigraphical variables $s_i$ for the max-terms in the objective function and by including $\w$ as a decision variable. 
	
	To prove assertion (ii), we apply Lemma~\ref{Lmm:main} to the integrand $f(\x, y) = L(\inner{\w}{\x} - y)$, where $L$ is a Lipschitz continuous convex loss function. Thus we find
	\begin{align*}
	\begin{array}{rl}
	\Sup _ {\QQ \in \Wball} \EE ^ \QQ  \left[ \loss (\x, y) \right] 
	&= \displaystyle  \Inf_{\lambda\geq 0 } ~\lambda\eps+\sum_{i=1}^N \Sup_{\x,y} ~ L (\inner{\w}{\x} - y) - \lambda \| (\x,y) - (\widehat \x_i,\widehat y_i) \| \\
	&= \optimize{
		\Inf_{\lambda} & \displaystyle \lambda \eps + \frac{1}{N} \SumN L (\inner{\w}{\widehat \x_i} - \widehat y_i) \\
		\text{s.t.} & \lip |\theta| \cdot \| (\w, -1) \|_* \leq \lambda,} 
	\end{array}
	\end{align*}
	where the last equality uses Lemma~\ref{Lmm:convex}. Next, we eliminate $\lambda$ and include $\w$ as a decision variable.		 
\end{proof}

\begin{proof} [Proof of Corollary~\ref{Crl:hr}]
	Note that the Huber loss function $L(z)$ coincides with the inf-convolution of $\frac{1}{2} z^2$ and $\delta |z|$ and can thus be expressed as $L(z) = \min_{z_1} ~ \frac{1}{2} z_1^2 + \delta |z - z_1|$. Moreover, the Lipschitz modulus of the Huber loss function is $\delta$. The rest of the proof follows from Theorem~\ref{Thrm:Regression}(ii).
\end{proof}	

\begin{proof} [Proof of Corollary~\ref{Crl:svr}]
	Notice that the $\epsilon$-insensitive loss function is a piecewise linear function with $J=3$ pieces, see Section~\ref{sec:stat-learn}.
	By strong conic duality, the support function of $\Xi = \{ (\x, y) \in \RR^{n+1} : \bm C_1 \x + \bm c_2 y \preceq_{\CC} \bm d \}$ can be re-expressed as
	$$ S_\Xi(\z_1, z_2) 
	= \Sup_{\x, y} \left\{ \inner{\z_1}{\x} + z_2y: \bm C_1 \x + \bm c_2 y \preceq_{\CC} \bm d \right\}
	= \Inf_{ \q \in \CC^*} \left\{ \inner{\q}{\bm d}: \bm C_1^\top \q = \z_1,~  \bm c_2^\top \q = z_2\right\}.$$
	Strong duality holds because $\Xi$ admits a Slater point. The rest of proof follows from Theorem~\ref{Thrm:Regression}(i).
\end{proof}

\begin{proof} [Proof of Corollary~\ref{Crl:qr}]
	The pinball loss function is a piecewise linear function with $J=2$ pieces, see Section~\ref{sec:stat-learn}. The rest of proof follows from the dual representation of the support function $S_\Xi(\z_1, z_2)$, which is known from the proof of Corollary~\ref{Crl:svr}, and from Theorem~\ref{Thrm:Regression}(i).
\end{proof}
The proof of Theorem~\ref{Thrm:worst-case:regression} is based on the following preparatory lemma.
\begin{Lmm2}
	\label{Lmm:worst}
	If $\ZZ \subseteq \RR^d$ is a non-empty convex closed set, $\widehat {\bm \zeta} \in\ZZ$, $\Beta\in \RR^d$ and $\alpha, \gamma \geq 0$, then we have
	\begin{align*}
	\Inf_{\p} ~ \alpha S_\ZZ (\Beta - \p) + \alpha \inner{\p}{\widehat {\bm \zeta}} + \gamma \| \p \|_* 
	&= \optimize{
		\Sup_{\| \q \| \leq \gamma} & \alpha \inner{\Beta}{\widehat {\bm \zeta}} + \inner{\Beta}{\q} \\
		\text{s.t.} & \widehat {\bm \zeta} + \q/\alpha \in \ZZ.}
	\end{align*}
\end{Lmm2}

\begin{proof}
	If $\alpha=0$, then the optimal values of both opimization problems vanish due to our conventions of extended arithmetic, and thus the claim trivially holds. If $\alpha>0$, however, we have
	\begin{align*}
	\Inf_ {\p} ~ \alpha \, S_\ZZ (\Beta - \p) + \alpha \inner{\p}{\widehat {\bm \zeta}} + \gamma \| \p \|_* 
	&= \Inf_ {\p} ~ \sup_{\| \q \| \leq \gamma} ~ \alpha S_\ZZ(\Beta - \p) + \alpha \inner{\p}{\widehat {\bm \zeta}} + \inner{\p}{\q} \\
	&= \sup_{\| \q \| \leq \gamma} ~ \Inf_ {\p} ~ \alpha S_\ZZ(\Beta - \p) +  \inner{\p}{\alpha \widehat {\bm \zeta} + \q} \\
	&= \sup_{\| \q \| \leq \gamma} ~ \Inf_ {\z} ~ \alpha S_\ZZ(\z) +  \inner{\Beta - \z}{\alpha \widehat {\bm \zeta} + \q} \\
	&= \sup_{\| \q \| \leq \gamma} ~ \alpha \inner{\Beta}{\widehat {\bm \zeta}} + \inner{\Beta}{\q} - \alpha \big( \sup_{\z} \inner{\z}{\widehat {\bm \zeta} + \q / \alpha} - S_\ZZ(\z) \big) \\
	& = \sup_{\| \q \| \leq \gamma} \alpha \inner{\Beta}{\widehat {\bm \zeta}} + \inner{\Beta}{\q} - \alpha \delta_\ZZ (\widehat {\bm \zeta} + \q / \alpha),
	\end{align*}
	where the first equality follows from the definition of the dual norm, the second equality exploits \citep[Proposition~5.5.4]{bertsekas2009convex}, and the last equality holds because, for any convex closed set, the indicator function is the conjugate of the support function.
\end{proof}

\begin{proof} [Proof of Theorem~\ref{Thrm:worst-case:regression}]
	We first prove assertion (i). By Theorem~\ref{Thrm:Regression}(i), the worst-case expectation problem~\eqref{wc-expectation-regression} constitutes a restriction of \eqref{tractable:regression:linear} where $\w$ is fixed, and thus it coincides with the minimax problem
	\begin{align*} 
	\Inf_{ \substack{\lambda,s_i \\ \p_{ij}, u_{ij}} } ~ \Sup_{\alpha_{ij} \geq 0, \gamma_{ij} \geq 0}& ~ \Obj + \Sum_{i=1}^N \Sum_{j=1}^J \alpha_{ij} \big( S_\Xi(-a_j \w - \p_{ij}, a_j - u_{ij}) + b_j + \inner{\p_{ij}} {\widehat{\x}_i} + u_{ij} \widehat{y}_i - s_i \big) \\
	&+ \Sum_{i=1}^N \Sum_{j=1}^J \gamma_{ij} \big( \| (\p_{ij}, u_{ij}) \|_* - \lambda \big).
	\end{align*}
	The minimization and the maximization may be interchanged by strong duality, which holds because the convex program \eqref{tractable:regression:linear} satisfies Slater's constraint qualification for every fixed $\w$ \citep[Proposition~5.3.1]{bertsekas2009convex}. Indeed, note that $S_\Xi$ is proper, convex and lower semi-continuous and appears in constraints that are always satisfiable because they involve a free decision variable. Thus, the above minimax problem is equivalent to			
	\begin{align*}
	\optimize{
		\Sup_{ \alpha_{ij}, \gamma_{ij}} & \Inf_{ \p_{ij}, u_{ij}} ~ \displaystyle \sum_{i=1}^N \sum_{j=1}^J \alpha_{ij} \big( S_\Xi(a_j \w - \p_{ij}, a_j - u_{ij}) + b_j + \inner{\p_{ij}} {\widehat{\x}_i} + u_{ij} \widehat{y}_i \big) + \sum_{i=1}^N \sum_{j=1}^J \gamma_{ij}  \| \p_{ij} \|_* \\
		\mathrm{s.t.} & \displaystyle \sum_{i=1}^N \sum_{j=1}^J \gamma_{ij} = \eps  \\
		& \displaystyle \sum_{j=1}^J \alpha_{ij} = \frac{1}{N} \hspace{4.4em} i \in [N] \\
		& \alpha_{ij}, \gamma_{ij} \geq 0 \hspace{5em} i \in [N], j \in [J].}
	\end{align*}
	By Lemma~\ref{Lmm:worst}, which applies because $(\widehat{\x}_i, \widehat y_i)\in\Xi$ for all $i\leq N$, the above dual problem simplifies to
	\begin{align*}
	\optimize{
		\Sup_{ \substack{\alpha_{ij}, \gamma_{ij} \\ \q_{ij}, v_{ij}} } &  \displaystyle \sum_{i=1}^N \sum_{j=1}^J \alpha_{ij} \big( a_j (\inner{\w}{\widehat \x_i} - \widehat y_i) + b_j \big) + a_j (\inner{\w}{\q_{ij}} - v_{ij}) & \\
		\mathrm{s.t.} & \displaystyle \sum_{i=1}^N \sum_{j=1}^J \gamma_{ij} = \eps &  \\
		& \displaystyle \sum_{j=1}^J \alpha_{ij} = \frac{1}{N} & \hspace{-3cm} i \in [N] \\
		& \| (\q_{ij}, v_{ij}) \| \leq \gamma_{ij} & \hspace{-3cm} i \in [N], j \in [J] \\
		& \left( \widehat{\x}_i - \q_{ij}/\alpha_{ij}, \widehat y_i - v_{ij} / \alpha_{ij} \right) \in \Xi & \hspace{-3cm} i \in [N], j \in [J] \\
		& \alpha_{ij}, \gamma_{ij} \geq 0 & \hspace{-3cm} i \in [N], j \in [J].}
	\end{align*}
	Problem \eqref{worst:regression:linear} is now obtained by eliminating the variables $\gamma_{ij}$ and by substituting $\alpha_{ij}$, $\q_{ij}$, and $v_{ij}$ with $\alpha_{ij}/N$, $\q_{ij}/N$, and $v_{ij}/N$, respectively.
	
	As for assertion (ii), we first show that the discrete distribution $\QQ_\gamma$ belongs to the Wasserstein ball $\Wball$ for all $\gamma \in (0,1]$. Indeed, the Wasserstein distance between $\QQ_\gamma$ and $\Pem$ amounts to
	$$ d(\QQ_\gamma, \Pem) \leq \frac{\gamma}{N} \left\| \left(\widehat \x_1 + \frac{\eps N}{\gamma} \x\opt, \widehat y_1 + \frac{\eps N}{\gamma} y\opt\right) - (\widehat \x_1, \widehat y_1) \right\| = \eps \| (\x\opt, y\opt) \| \leq \eps, $$
	where the first inequality holds because the Wasserstein distance coincides with the optimal mass transportation cost, and the last inequality holds because the norm of $(\x\opt, y\opt)$ is at most $1$ by construction. Thus, $\QQ_\gamma\in \Wball$ for all $\gamma \in (0,1]$. Denoting the optimal value of~\eqref{wc-expectation-regression}  by $J\opt(\w)$ and using $\loss(\x,y)$ as a shorthand for $L(\inner{\w}{\x}-y)$, we find
	$$ \begin{array}{rl}
	J\opt(\w) &\geq \EE ^ {\QQ_\gamma} \left[ \loss(\x,y) \right] \\
	&= \frac{1}{N} \SumN \loss(\widehat \x_i, \widehat y_i) - \frac{\gamma}{N} \loss(\widehat \x_1, \widehat y_1) + \frac{\gamma}{N} \loss(\widehat \x_1 + \frac{\eps N}{\gamma} \x\opt, \widehat y_1 + \frac{\eps N}{\gamma} y\opt) \\
	&\geq \frac{1}{N} \SumN \loss(\widehat \x_i, \widehat y_i) - \frac{\gamma}{N} \loss(\widehat \x_1, \widehat y_1) + \frac{\gamma}{N} \big(\inner{(\x,y)}{(\widehat \x_1 + \frac{\eps N}{\gamma} \x\opt, \widehat y_1 + \frac{\eps N}{\gamma} y\opt)}\\ &\qquad\quad - \loss^*(\x,y) \big) \quad~ \forall (\x,y) \in \RR^{n+1},
	\end{array} $$
	where the last estimate follows from Fenchel's inequality. Setting $(\x,y) = \lip (\w, -1)$ we thus have
	$$ \begin{array}{rl}
	J\opt(\w) &\geq \lim\limits_{\gamma \rightarrow 0^+} \frac{1}{N} \SumN \loss(\widehat \x_i, \widehat y_i) - \frac{\gamma}{N} \loss(\widehat \x_1, \widehat y_1) + \frac{\gamma}{N} \lip (\inner{\w}{\widehat \x_1} - \widehat y_1) +\eps \lip \| (\w, -1) \|_* \\
	& \qquad \quad - \frac{\gamma}{N} \loss^*(\lip (\w, -1)) \\
	&= \frac{1}{N} \SumN \loss(\widehat \x_i, \widehat y_i) + \eps \lip \| (\w, -1) \|_* = J\opt(\w),
	\end{array} $$
	where the equality follows from Theorem~\ref{Thrm:Regression}(ii). The above reasoning implies that $\lim_{\gamma \rightarrow 0^+}\EE ^ {\QQ_\gamma} [ \loss(\x,y) ]=J\opt(\w)$, and thus the claim follows.
\end{proof}

\begin{proof} [Proof of Theorem~\ref{Thrm:RobConnection}]
	Assume first that the loss function $L$ is convex piecewise linear, that is, $L(z)= \max_{j\in J} \{ a_j z + b_j \}$. As $\Xi \in \RR^{n+1}$, Theorem~\ref{Thrm:worst-case:regression}(i) implies that the worst-case expectation \eqref{wc-expectation-regression} is given by
	\begin{align*}
	& \optimize{
		\Sup_{ \alpha_{ij}, \q_{ij}, v_{ij} } & \displaystyle \frac{1}{N} \sum_{i=1}^N\sum_{j=1}^J \alpha_{ij} \Big[ a_j (\inner{\w}{\widehat \x_i} - \widehat y_i) + b_j \Big] + a_j (\inner{\w}{\q_{ij}} - v_{ij}) & \\
		\mathrm{s.t.} &\displaystyle \frac{1}{N} \sum_{i=1}^N\sum_{j=1}^J \| (\q_{ij},v_{ij}) \| \leq \eps &  \\
		& \displaystyle \sum_{j=1}^J \alpha_{ij} = 1 &\hspace{-5cm} i \in [N] \\
		& \alpha_{ij} \geq 0 & \hspace{-5cm} i \in [N], j \in [J]} \\
	\geq & \optimize{
		\Sup_{\alpha_{ij}, \Delta \x_{ij}, \Delta y_{ij}} & \displaystyle \frac{1}{N} \sum_{i=1}^N\sum_{j=1}^J \alpha_{ij} \left[ a_j (\inner{\w}{\widehat \x_i + \Delta \x_{ij}} - \widehat y_i - \Delta y_{ij} )+ b_j \right] & \\
		\mathrm{s.t.} & \displaystyle \frac{1}{N} \sum_{i=1}^N\sum_{j=1}^J \alpha_{ij} \| (\Delta \x_{ij}, \Delta y_{ij}) \| \leq \eps & \\
		& \displaystyle \sum_{j=1}^J \alpha_{ij} = 1 & \hspace{-5cm} i \in [N] \\
		& \displaystyle \alpha_{ij} \geq 0 & \hspace{-5cm} i \in [N], j \in [J] } \\
	\geq & \optimize{
		\Sup_{\alpha_{ij}, \Delta \x_{i}, \Delta y_{i}} & \displaystyle \frac{1}{N} \sum_{i=1}^N\sum_{j=1}^J \alpha_{ij} \left[ a_j (\inner{\w}{\widehat \x_i + \Delta \x_{i}} - \widehat y_i - \Delta y_{i}) + b_j \right] & \\
		\mathrm{s.t.} & \displaystyle \frac{1}{N} \sum_{i=1}^N\sum_{j=1}^J \alpha_{ij} \| (\Delta \x_{i}, \Delta y_{i}) \| \leq \eps & \\
		& \displaystyle \sum_{j=1}^J \alpha_{ij} = 1 & \hspace{-5cm}i \in [N] \\
		& \alpha_{ij} \geq 0 & \hspace{-5cm} i \in [N], j \in [J] .}
	\end{align*}
	The first inequality holds because for any feasible solution $\{\alpha_{ij}, \Delta \x_{ij}, \Delta y_{ij}\}$ to the second optimization problem, the solution $\{\alpha_{ij}, \q_{ij}, v_{ij}\}$ with $\q_{ij} = \alpha_{ij} \Delta \x_{ij}$ and $v_{ij} = \alpha_{ij} \Delta y_{ij}$ 
	is feasible in the first problem and attains the same objective value (conversely, note that the first problem admits feasible solutions with $\alpha_{ij}=0$ and $\q_{ij}\neq \bm 0$ that have no counterpart in the second problem). The second inequality follows from the restriction that $\Delta \x_{ij}$ and $\Delta y_{ij}$ must be independent of $j$. It is easy to verify that the last optimization problem in the above expression is equivalent to~\eqref{wc-regression-loss} because $(\alpha_{i1},\ldots,\alpha_{iJ})$ ranges over a simplex for every $i\leq N$, and thus~\eqref{wc-regression-loss} provides a lower bound on~\eqref{wc-expectation-regression}. 
	
	Suppose now that Assumption~\ref{Asmp:sep:reg} holds, and note that the worst-case loss~\eqref{wc-regression-loss} can be expressed as
	\begin{align*}
	& \optimize{
		\Sup_{\Delta \x_{i}, \Delta y_{i}} & \displaystyle \frac{1}{N} \sum_{i=1}^N \max_{j\leq J} \left[ a_j (\inner{\w}{\widehat \x_i + \Delta \x_{i}} - \widehat y_i - \Delta y_{i}) + b_j \right] & \\
		\mathrm{s.t.} & \displaystyle \frac{1}{N} \sum_{i=1}^N \| (\Delta \x_{i}, \Delta y_{i}) \| \leq \eps & } \\
	\geq & \optimize{
		\Sup_{\Delta \x, \Delta y} & \displaystyle \frac{1}{N} \sum_{i \neq k} \max_{j\leq J} \left[ a_j (\inner{\w}{\widehat \x_i} - \widehat y_i) + b_j \right] + \frac{1}{N} \max_{j\leq J} \left[ a_j (\inner{\w}{\widehat \x_k + \Delta \x} - \widehat y_k - \Delta y) + b_j \right]  \\
		\mathrm{s.t.} & \displaystyle \frac{1}{N} \| (\Delta \x, \Delta y) \| \leq \eps. }
	\end{align*}
	The above inequality follows from setting $\Delta \x_i = 0$ and $\Delta y_i = 0$ for all $i \neq k$, where $(\widehat \x_k , \widehat y_k)$ is a training sample satisfying $ |L'(\inner{\w}{\widehat \x_k}- \widehat y_k)| = \lip$, which exists due to Assumption~\ref{Asmp:sep:reg}. The last expression equals
	\begin{align*}
	=& \frac{1}{N} \sum_{i \neq k} \max_{j\leq J} \left[ a_j (\inner{\w}{\widehat \x_i} - \widehat y_i) + b_j \right] + \max_{j\leq J} \optimize{
		\Sup_{\Delta \x, \Delta y} & \displaystyle \frac{1}{N}  \left[ a_j (\inner{\w}{\widehat \x_k} - \widehat y_k) + b_j \right] + \frac{1}{N}
		\left[ a_j (\inner{\w}{\Delta \x} - \Delta y) \right] & \\
		\mathrm{s.t.} & \displaystyle \frac{1}{N} \| (\Delta \x, \Delta y) \| \leq \eps & } \\
	=& \frac{1}{N} \sum_{i \neq k} \max_{j\leq J} \left[ a_j (\inner{\w}{\widehat \x_i} - \widehat y_i) + b_j \right] + \frac{1}{N} \max_{j\leq J} \Big[ a_j (\inner{\w}{\widehat \x_k} - \widehat y_k) + b_j + \eps N \|(\w,-1)\|_* |a_j| \Big] \\
	=&  \frac{1}{N} \sum_{i \neq k} \max_{j\leq J} \left[ a_j (\inner{\w}{\widehat \x_i} - \widehat y_i) + b_j \right] + \frac{1}{N} \max_{j\leq J} \left[ a_j (\inner{\w}{\widehat \x_k} - \widehat y_k) + b_j \right] + \max_{j \leq J} \eps \|(\w,-1)\|_* |a_j|  \\
	=& \frac{1}{N} \sum_{i=1}^N L( \inner{\w}{\widehat \x_i} - \widehat y_i) + \max_{j\leq J} \eps \| (\w,-1) \|_* |a_j|,
	\end{align*}
	where the penultimate equality holds because $\inner{\w}{\widehat \x_k} - \widehat y_k$ resides on the steepest linear piece of the loss function $L$ by virtue of Assumption~\ref{Asmp:sep:reg}. The claim then follows from Theorem~\ref{Thrm:Regression}(ii) because $\lip=\max_{j\leq J} |a_j|$. Note that generic convex Lipschitz continuous loss functions can be uniformly approximated as closely as desired with convex piecewise linear functions. Thus, the above argument extends directly to generic convex Lipschitz continuous loss functions. Details are omitted for brevity.
\end{proof}

\begin{proof} [Proof of Theorem~\ref{Thrm:Classification}]
	To prove assertion (i), we apply Lemma~\ref{Lmm:main} to problem~\eqref{distrob:wass} with the transportation distance~\eqref{metric}, where $\xxi=(\x,y)$ and $\Xi = \XX \times \{-1,+1\}$. Thus, we obtain
	\begin{align*}
	\Sup _ {\QQ \in \Wball} \EE ^ \QQ  \left[ \ell (\inner{\w}{\x}, y) \right] &= 
	\optimize{
		\Inf_{\lambda,s_i} & \displaystyle \Obj & \\
		\text{s.t.}
		& \Sup_{(\x, y) \in \Xi} ~L (y\inner{\w}{\x}) - \lambda \big( \| \x - \widehat \x_i \| + \frac{\kappa}{2} | y - \widehat y_i | \big) \leq s_i & i \in [N] \\
		&\lambda \geq 0.} \\
	&= \optimize{
		\Inf_{\lambda,s_i} & \displaystyle \Obj & \\
		\text{s.t.}
		& \Sup_{\x \in \XX} ~L (\widehat y_i \inner{\w}{\x}) - \lambda \| \x - \widehat{\x}_i \| \leq s_i & i \in [N] \\
		& \Sup_{\x \in \XX} ~L (-\widehat y_i \inner{\w}{\x}) - \lambda \| \x - \widehat{\x}_i \| - \kappa \lambda \leq s_i & i \in [N] \\
		&\lambda \geq 0} \\
	&= \optimize{
		\Inf_{\lambda,s_i} & \displaystyle \Obj & \\
		\text{s.t.}
		& \Sup_{\x \in \XX} ~ a_j \widehat y_i \inner{\w}{\x} + b_j - \lambda \| \x - \widehat{\x}_i \| \leq s_i & i \in [N], j \in [J] \\
		& \Sup_{\x \in \XX} ~ - a_j \widehat y_i \inner{\w}{\x} + b_j - \lambda \| \x - \widehat{\x}_i \| - \kappa \lambda \leq s_i & i \in [N], j \in [J] \\
		&\lambda \geq 0,}	
	\end{align*}
	where the second equality holds because, for every $i$, $y$ can be either equal to $\widehat y_i$ or to $- \widehat y_i$. Reformulating the constraints using Lemma~\ref{Lmm:linear} and including $\w$ as a decision variable then yields \eqref{tractable:classification:linear}.
	
	When $\XX = \RR^n$ and $L$ is Lipschitz continuous, we can use similar arguments as above to prove that
	\begin{align*}
	\Sup _ {\QQ \in \Wball} \EE ^ \QQ  \left[ \ell (\inner{\w}{\x}, y) \right] 
	&= \optimize{
		\Inf_{\lambda,s_i} & \displaystyle \Obj & \\
		\text{s.t.}
		& \Sup_{\x \in \RR^{n}} ~ L (\widehat y_i \inner{\w}{\x}) - \lambda \| \x - \widehat \x_i \| \leq s_i & i \in [N] \\
		& \Sup_{\x \in \RR^{n}} ~ L (-\widehat y_i \inner{\w}{\x}) - \lambda \| \x - \widehat \x_i \| - \kappa \lambda \leq s_i & i \in [N] \\
		&\lambda \geq 0.}
	\end{align*}
	Applying Lemma~\ref{Lmm:convex} to the subordinate maximization problems in the constraints yields
	\begin{align*}
	\Sup _ {\QQ \in \Wball} \EE ^ \QQ  \left[ \ell (\inner{\w}{\x}, y) \right] 
	&= \optimize{
		\Inf_{\lambda,s_i} & \displaystyle \Obj & \\
		\text{s.t.} & L (\widehat y_i \inner{\w}{\widehat \x_i}) \leq s_i & i \in [N], j \in [J] \\
		& L (-\widehat y_i \inner{\w}{\widehat \x_i}) - \kappa \lambda \leq s_i & i \in [N], j \in [J] \\
		& \Sup_{\theta \in \Theta} ~ |\theta| \cdot \| \w \|_* \leq \lambda.}
	\end{align*}
	Thus, assertion (ii) follows by recalling that $\lip=\sup_{\theta} \{ |\theta|: L^*(\theta) < \infty \}=\sup_{ \theta\in\Theta}|\theta|$ and by including $\w$ as a decision variable.
\end{proof}

\begin{proof} [Proof of Corollary~\ref{Crl:svm}]
	Notice that the hinge loss function is piecewise linear with $J=2$ pieces, see Section~\ref{sec:stat-learn}. Moreover, by strong conic duality the support function of $\XX$ can be re-expressed as
	$$ S_\XX(\z) 
	= \Sup_{\x} \left\{ \inner{\z}{\x} : \bm C \x \preceq_{\CC} \bm d \right\} 
	= \Inf_{\q \in \CC^*} \left\{ \inner{\q}{\bm d} : \bm C^\top \q = \z \right\}.$$
	Strong duality holds because $\XX$ admits a Slater point.
	The proof thus follows from Theorem~\ref{Thrm:Classification}(i).
\end{proof}

\begin{proof} [Proof of Corollary~\ref{Crl:hrc}]
	The smooth hinge loss $L(z)$ coincides with the inf-convolution of $\frac{1}{2} z^2$ and $\max \{0, 1-z \}$ and can thus be expressed as $L(z) = \min_{z_1} \frac{1}{2} z_1^2 + \max \{ 0, 1 - z - z_1 \}$. Moreover, the Lipschitz modulus of the smooth hinge loss function is~1. The proof thus follows from Theorem~\ref{Thrm:Classification}(ii).
\end{proof}

\begin{proof} [Proof of Corollary~\ref{Crl:lr}]
	Notice that the logloss function is convex and has Lipschitz modulus~1, see Section~\ref{sec:stat-learn}. The rest of proof follows from Theorem~\ref{Thrm:Classification}(ii). For more details see \citep{shafieezadeh2015distributionally}.
\end{proof}

\begin{proof} [Proof of Theorem~\ref{Thrm:worst-case:classification}]
	We first prove assertion (i). By Theorem~\ref{Thrm:Classification}(i), the worst-case expectation problem~\eqref{wc-expectation-classification} constitutes a restriction of \eqref{tractable:classification:linear} where $\w$ is fixed, and thus it is equivalent to the minimax problem
	\begin{align*} 
	\Inf_{ \substack{\lambda,s_i \\ \p^+_{ij}, \p^-_{ij} } } ~ \Sup_{\substack{\alpha^+_{ij} \geq 0, \gamma^+_{ij} \geq 0 \\ \alpha^-_{ij} \geq 0, \gamma^-_{ij} \geq 0}}& ~ \Obj + \sum_{i = 1}^{N}\sum_{j = 1}^{J} \alpha^+_{ij} \big( S_\XX(a_j \widehat y_i \w - \p^+_{ij}) + b_j + \inner{\p^+_{ij}}{\widehat \x_i} - s_i \big) \\
	&+ \sum_{i = 1}^{N}\sum_{j = 1}^{J} \alpha^-_{ij} \big( S_\XX(-a_j \widehat y_i \w - \p^-_{ij}) + b_j + \inner{\p^-_{ij}}{\widehat \x_i} - \kappa \lambda - s_i \big) \\
	&+ \sum_{i = 1}^{N}\sum_{j = 1}^{J} \gamma^+_{ij} \big( \| \p^+_{ij} \|_* - \lambda \big) + \sum_{i = 1}^{N}\sum_{j = 1}^{J} \gamma^-_{ij} \big( \| \p^-_{ij} \|_* - \lambda \big).
	\end{align*}
	The minimization and the maximization may be interchanged by strong Lagrangian duality, which holds because the convex program~\eqref{tractable:classification:linear} satisfies Slater's constraint qualification for any fixed $\w$ \citep[Proposition~5.3.1]{bertsekas2009convex}. Indeed, note that $S_\XX$ is proper, convex and lower semi-continuous and appears in constraints that are always satisfiable because they involve a free decision variable. Thus, problem~\eqref{wc-expectation-classification} is equivalent~to
	\begin{align*}
	\optimize{
		\Sup_{\substack{\alpha^+_{ij}, \gamma^+_{ij} \\ \alpha^-_{ij}, \gamma^-_{ij} }} & \Inf_{ \p^+_{ij}, \p^-_{ij}} ~ \displaystyle \sum_{i = 1}^{N}\sum_{j = 1}^{J} \alpha^+_{ij} \big( S_\XX(a_j \widehat y_i \w - \p^+_{ij}) + b_j + \inner{\p^+_{ij}}{\widehat \x_i} \big) + \gamma^+_{ij}  \| \p^+_{ij} \|_* \\
		& \hspace{26pt} \displaystyle + \sum_{i = 1}^{N}\sum_{j = 1}^{J} \alpha^-_{ij} \big( S_\XX(-a_j \widehat y_i \w - \p^-_{ij}) + b_j + \inner{\p^-_{ij}}{\widehat \x_i} \big)  + \gamma^-_{ij}  \| \p^-_{ij} \|_* & \\
		\mathrm{s.t.} & \displaystyle \sum_{i = 1}^{N}\sum_{j = 1}^{J} \gamma^+_{ij} + \gamma^-_{ij} + \kappa \alpha^-_{ij} = \eps &  \\
		& \displaystyle \sum_{j=1}^J \alpha^+_{ij} + \alpha^-_{ij} = \frac{1}{N} & \hspace{-5cm} i \in [N] \\
		& \alpha^+_{ij}, \alpha^-_{ij}, \gamma^+_{ij}, \gamma^-_{ij} \geq 0 & \hspace{-5cm}i \in [N], j \in [J].}
	\end{align*}
	By Lemma~\ref{Lmm:worst}, the above dual problem simplifies to
	\begin{align*}
	\optimize{
		\Sup_{ \substack{\alpha^+_{ij}, \gamma^+_{ij}, \q^+_{ij} \\ \alpha^-_{ij}, \gamma^-_{ij}, \q^-_{ij} } } &  \displaystyle \sum_{i = 1}^{N}\sum_{j = 1}^{J} (\alpha^+_{ij}-\alpha^-_{ij}) a_j \widehat y_i \inner{\w}{\widehat \x_i} + a_j \widehat y_i \inner{\w}{\q^+_{ij} - \q^-_{ij}} +\sum_{j=1}^J b_j & \\
		\mathrm{s.t.} & \displaystyle \sum_{i = 1}^{N}\sum_{j = 1}^{J} \gamma^+_{ij} + \gamma^-_{ij} + \kappa \alpha^-_{ij} = \eps &  \\
		& \displaystyle \sum_{j=1}^J \alpha^+_{ij} + \alpha^-_{ij} = \frac{1}{N} & \hspace{-3cm} i \in [N] \\
		& \| \q^+_{ij} \| \leq \gamma^+_{ij} ,\quad \| \q^-_{ij} \| \leq \gamma^-_{ij} & \hspace{-3cm} i \in [N], j \in [J] \\
		& \widehat{\x}_i + \q^+_{ij}/\alpha^+_{ij} \in \XX,\quad \widehat{\x}_i + \q^-_{ij}/\alpha^-_{ij} \in \XX & \hspace{-3cm} i \in [N], j \in [J] \\
		& \alpha^+_{ij}, \alpha^-_{ij}, \gamma^+_{ij}, \gamma^-_{ij} \geq 0 &\hspace{-3cm} i \in [N], j \in [J]. }
	\end{align*}
	Problem~\eqref{worst:classification:linear} is now obtained by eliminating $\gamma^+_{ij}$ and $\gamma^-_{ij}$ and by substituting $\alpha^+_{ij}$, $\alpha^-_{ij}$, $\q^+_{ij}$, and $\q^-_{ij}$ with $\alpha^+_{ij}/N$, $\alpha^-_{ij}/N$, $\q^+_{ij}/N$, and $\q^-_{ij}/N$, respectively.

	As for assertion (ii), we use $L_i^+$ and $L_i^-$ to abbreviate $L (\widehat{y}_i \inner{\w}{\widehat{\x}_i})$ and $L (- \widehat{y}_i \inner{\w}{\widehat{\x}_i})$, respectively.
	By Theorem~\ref{Thrm:Classification}(ii), we have
	\begin{align*}
	\sup_ {\QQ \in \Wball} \EE ^ \QQ [ L(y\inner{\w}{\x}) ] 
	&= \optimize{
		\Inf_{\w,\lambda,s_i} & \displaystyle \Obj & \\
		\text{s.t.} & L_i^+ \leq s_i & i \in [N] \\
		& L_i^- - \kappa \lambda \leq s_i & i \in [N] \\
		& \lip \| (\w) \|_* \leq \lambda & } \\
	&= \optimize{
		\Sup_{\alpha_i^+, \alpha_i^-, \theta} & \displaystyle \lip \| (\w) \|_* \theta + \SumN \alpha_i^+ L_i^+ + \alpha_i^- L_i^- & \\
		\text{s.t.} & \alpha_i^+ + \alpha_i^- = \frac{1}{N} & \hspace{-3cm} i \in [N] \\
		& \theta + \kappa \SumN \alpha_i^- = \eps & \\
		& \alpha_i^+ \geq 0, \, \alpha_i^- \geq 0 & \hspace{-3cm} i \in [N] \\
		& \theta \geq 0, &} 
	\end{align*}
	where the second equality follows from strong linear programming duality, which holds because the primal problem is manifestly feasible.
	Eliminating the first constraint and replacing $\alpha_i^-$ with $\alpha_i / N$ and $\alpha_i^+$ with $(1-\alpha_i) / N$ allows us to  reformulate the dual linear program as
	\begin{align*}
	\optimize{
		\Sup_{\alpha_i, \theta} & \displaystyle \lip (\w) \|_* \theta + \frac{1}{N}\SumN (1-\alpha_i) L_i^+ + \alpha_i L_i^- & \\
		\text{s.t.} & \displaystyle \theta +  \frac{\kappa}{N} \SumN \alpha_i = \eps & \\
		& 0 \leq \alpha_i \leq 1 & \hspace{-3cm} i \in [N] \\
		& \theta \geq 0. &}
	\end{align*}
	Thus, the worst-case expectation~\eqref{wc-expectation-classification} coincides with the optimal value of \eqref{worst:classification:convex} for $\gamma = 0$.
	Next let $(\alpha_i\opt(\gamma),\theta\opt(\gamma))$ be an optimal solution of \eqref{worst:classification:convex} for $\gamma \geq 0$, and define $\QQ_\gamma$ as in the theorem statement. Note that \eqref{worst:classification:convex} is infeasible for $\gamma>\eps$. Moreover, note that the atoms of $\QQ_\gamma$ have non-negative probabilities if $\eta(\gamma)\in[0,1]$, which holds whenever $\gamma\in[0,1]$. We thus focus on parameter values $\gamma\in[0,\min\{\eps,1\}]$. By construction, the Wasserstein distance between $\QQ_\gamma$ and the empirical distribution satisfies
	$$ \begin{array}{rl}
	d(\QQ_\gamma, \Pem) &\leq \frac{\kappa}{N} \SumN \alpha_i\opt(\gamma) - \frac{\eta(\gamma) \kappa}{N} \alpha\opt_1(\gamma) + \frac{\eta(\gamma)}{N} d\big( (\widehat \x_1 + \frac{\theta\opt(\gamma) N}{\eta(\gamma)} \x\opt, \widehat y_1) - (\widehat \x_1, \widehat y_1) \big) \\ 
	&\leq \eps - \gamma - \theta\opt(\gamma) + \theta\opt(\gamma) \| \x\opt \| \leq \eps,
	\end{array} $$
	where the first inequality holds because the Wasserstein distance is defined as the minimum cost of moving~$\QQ_\gamma$ to $\Pem$, the second inequality follows from the feasibility of $(\alpha_i\opt(\gamma),\theta\opt(\gamma))$ in \eqref{worst:classification:convex} and the non-negativity of $\eta(\gamma)$, and the last inequality holds because $\| \x\opt \| \leq 1$, $\theta\opt(\gamma)\geq 0$ and $\gamma\geq 0$. Thus, $\QQ_\gamma\in \Wball$ for all $\gamma \in [0,\min\{\eps,1\}]$. Denoting the optimal value of~\eqref{wc-expectation-classification}  by $J\opt(\w)$, we find 
	$$ \begin{array}{rl}
	J\opt(\w) &\geq \EE ^ {\QQ_\gamma} \left[ L(y \inner{\w}{\x}) \right] \\
	&= \frac{1}{N} \big( \SumN (1-\alpha_i\opt(\gamma)) L_i^+ + \alpha_i\opt(\gamma) L_i^- \big) - \frac{\eta(\gamma)}{N} \big( (1-\alpha_1\opt(\gamma)) L_1^+ + \alpha_1\opt(\gamma) L_1^- \big) \\
	& \quad + \frac{\eta(\gamma)}{N} L \big( \widehat y_1 \inner{\w}{\widehat \x_1 + \frac{\theta\opt(\gamma) N}{\eta(\gamma)} \x\opt} \big) \\
	&\geq \frac{1}{N} \big( \SumN (1-\alpha_i\opt(\gamma)) L_i^+ + \alpha_i\opt(\gamma) L_i^- \big) - \frac{\eta(\gamma)}{N} \big( (1-\alpha_1\opt(\gamma)) L_1^+ + \alpha_1\opt(\gamma) L_1^- \big) \\
	& \quad + \frac{\eta(\gamma)}{N} \Big( \inner{\x}{\widehat \x_1 + \frac{\theta^*(\gamma)N}{\eta(\gamma)} \x\opt} - L^*(\widehat y_1 \inner{\w}{\x}) \Big) \qquad \forall \x \in \RR^{n},
	\end{array} $$
	where the last estimate follows from Fenchel's inequality. Setting $\x = \lip \w$ and driving $\gamma$ to zero yields
	$$ \begin{array}{rl}
	J\opt(\w) &\geq \lim\limits_{\gamma \rightarrow 0^+} \frac{1}{N} \Big( \SumN (1-\alpha_i\opt(\gamma)) L_i^+ + \alpha_i\opt(\gamma) L_i^- \Big) - \frac{\eta(\gamma)}{N} \big( (1-\alpha_1\opt(\gamma)) L_1^+ + \alpha_1\opt(\gamma) L_1^- \big) \\
	& \quad + \frac{\eta(\gamma)}{N} \big( \lip \inner{\w}{\widehat \x_1} - L^*(\widehat y_1 \lip \inner{\w}{\w}) \big) + \lip \| \w \|_* \cdot \theta\opt(\gamma) \\
	&=  \lim\limits_{\gamma \rightarrow 0^+} \frac{1}{N} \big( \SumN (1-\alpha_i\opt(\gamma)) L_i^+ + \alpha_i\opt(\gamma) L_i^- \big) + \lip \| \w \|_* \cdot \theta\opt(\gamma) = J\opt(\w),
	\end{array} $$
	where the first equality follows from the observation that $\eta(\gamma) \in[0,\gamma]$, which implies that $\eta(\gamma)$ converges to zero as $\gamma$ tends to zero. The second equality holds because the optimal value of~\eqref{worst:classification:convex} is concave and non-increasing and---{\em a fortiori}---continuous in $\gamma\in [0,\min\{\eps,1\}]$ and because $J\opt(\w)$ coincides with the optimal value of~\eqref{worst:classification:convex} when $\gamma=0$. The above reasoning implies that $\lim_{\gamma \rightarrow 0^+}\EE ^ {\QQ_\gamma}[ L(y \inner{\w}{\x})]=J\opt(\w)$, and thus the claim follows.
\end{proof}	

\begin{proof} [Proof of Theorem~\ref{Thrm:RobConnection:class}]
	Assume first that the loss function $L$ is convex piecewise linear, that is, $L(z)= \max_{j\in J} \{ a_j z + b_j \}$. As $\XX = \RR^n$ and $\kappa=\infty$, Theorem~\ref{Thrm:worst-case:classification}(i) implies that~\eqref{wc-expectation-classification} can be expressed as
	\begin{align*}
	& \optimize{
		\Sup_{ \alpha^+_{ij}, \q^+_{ij} } & \displaystyle \frac{1}{N} \sum_{i = 1}^{N}\sum_{j = 1}^{J} \alpha^+_{ij} a_j \widehat y_i \inner{\w}{\widehat \x_i} + a_j \widehat y_i \inner{\w}{\q^+_{ij}} +\sum_{j=1}^J b_j & \\		
		\mathrm{s.t.} & \displaystyle \sum_{i = 1}^{N}\sum_{j = 1}^{J} \| \q^+_{ij} \| \leq N \eps &  \\
		& \displaystyle \sum_{j=1}^J \alpha^+_{ij} = 1 & \hspace{-6cm} i \in [N] \\
		& \alpha^+_{ij} \geq 0 & \hspace{-6cm} i \in [N], j \in [J]} \\
	\geq & \optimize{
		\Sup_{ \alpha_{ij}, \Delta \x_{ij} } & \displaystyle \frac{1}{N} \sum_{i = 1}^{N}\sum_{j = 1}^{J} \alpha_{ij} a_j \widehat y_i \inner{\w}{\widehat \x_i + \Delta \x_{ij}} +\sum_{j=1}^J b_j & \\		
		\mathrm{s.t.} & \displaystyle \sum_{i = 1}^{N}\sum_{j = 1}^{J} \alpha_{ij} \| \Delta \x_{ij} \| \leq N \eps &  \\
		& \displaystyle \sum_{j=1}^J \alpha_{ij} = 1 & \hspace{-5cm} i \in [N] \\
		& \alpha_{ij} \geq 0 & \hspace{-5cm} i \in [N], j \in [J]} \\
	\geq & \optimize{
		\Sup_{ \alpha_{ij}, \Delta \x_{i} } & \displaystyle \frac{1}{N} \sum_{i = 1}^{N}\sum_{j = 1}^{J} \alpha_{ij} a_j \widehat y_i \inner{\w}{\widehat \x_i + \Delta \x_{i}} +\sum_{j=1}^J b_j & \\		
		\mathrm{s.t.} & \displaystyle \sum_{i = 1}^{N}\sum_{j = 1}^{J} \alpha_{ij} \| \Delta \x_{i} \| \leq N \eps &  \\
		& \displaystyle \sum_{j=1}^J \alpha_{ij} = 1 & \hspace{-5cm} i \in [N] \\
		& \alpha_{ij} \geq 0 & \hspace{-5cm} i \in [N], j \in [J].} 
	\end{align*}
	The first optimization problem constitutes a special case of~\eqref{worst:classification:linear}. Indeed, as $\kappa = \infty$, the first constraint in~\eqref{worst:classification:linear} implies that $\alpha_{ij}^-=0$, which in turn implies via the fourth constraint and our conventions of extended arithmetics that $\q_{ij}^-=\bm 0$. The first inequality in the above expression holds because for any feasible solution $\{\alpha_{ij}, \Delta \x_{ij}\}$ to the second problem, the solution $\{\alpha^+_{ij}, \q^+_{ij}\}$ with $\q^+_{ij} = \alpha^+_{ij} \Delta \x_{ij}$ and $\alpha_{ij}=\alpha^+_{ij}$ for all $i\leq N$ and $j\leq J$ is feasible in the first problem and attains the same objective value. The second inequality in the above expression follows from the restriction that $\Delta \x_{ij}$ must be independent of $j$. It is easy to verify that the last optimization problem is equivalent to~\eqref{wc-classification-loss} because $(\alpha_{i1},\ldots,\alpha_{iJ})$ ranges over a simplex for every $i\leq N$, and thus~\eqref{wc-classification-loss} provides a lower bound on~\eqref{wc-expectation-classification}. 
	
	Suppose now that Assumption~\ref{Asmp:sep:class} holds, and note that \eqref{wc-classification-loss} can be expressed as
	\begin{align*}
	& \optimize{
		\Sup_{\Delta \x_{i}, \Delta y_{i}} & \displaystyle \frac{1}{N} \sum_{i=1}^N \max_{j\leq J} \left[ a_j \widehat y_i \inner{\w}{\widehat \x_i + \Delta \x_{i}} + b_j \right] & \\
		\mathrm{s.t.} & \displaystyle \frac{1}{N} \sum_{i=1}^N \| \Delta \x_{i} \| \leq \eps & } \\
	\geq & \optimize{
		\Sup_{\Delta \x, \Delta y} & \displaystyle \frac{1}{N} \sum_{i \neq k} \max_{j\leq J} \left[ a_j \widehat y_i \inner{\w}{\widehat \x_i} + b_j \right] + \frac{1}{N} \max_{j\leq J} \left[ a_j \widehat y_k \inner{\w}{\widehat \x_k + \Delta \x} + b_j \right] & \\
		\mathrm{s.t.} & \displaystyle \frac{1}{N} \| \Delta \x \| \leq \eps.& }
	\end{align*}
	The above inequality follows from setting $\Delta \x_i = 0$ and $\Delta y_i = 0$ for all $i \neq k$, where $(\widehat \x_k , \widehat y_k)$ is a training sample satisfying $ |L'(\widehat y_k\inner{\w}{\widehat \x_k})| = \lip$, which exists due to Assumption~\ref{Asmp:sep:class}. The last expression equals
	\begin{align*}
	& \frac{1}{N} \sum_{i \neq k} \max_{j\leq J} \left[ a_j \widehat y_i \inner{\w}{\widehat \x_i} + b_j \right] + \max_{j\leq J} \optimize{
		\Sup_{\Delta \x, \Delta y} & \displaystyle \frac{1}{N}  \left[ a_j \widehat y_k \inner{\w}{\widehat \x_k} + b_j \right] + \frac{1}{N}
		\left[ a_j \widehat y_k \inner{\w}{\Delta \x} \right] & \\
		\mathrm{s.t.} & \displaystyle \frac{1}{N} \| (\Delta \x, \Delta y) \| \leq \eps & } \\
	=& \frac{1}{N} \sum_{i \neq k} \max_{j\leq J} \left[ a_j \widehat y_i \inner{\w}{\widehat \x_i} + b_j \right] + \frac{1}{N} \max_{j\leq J} \Big[ a_j \widehat y_i \inner{\w}{\widehat \x_k} + b_j + \eps N \| \w \|_* |a_j| \Big] \\
	=&  \frac{1}{N} \sum_{i \neq k} \max_{j\leq J} \left[ a_j \widehat y_i \inner{\w}{\widehat \x_i} + b_j \right] + \frac{1}{N} \max_{j\leq J} \left[ a_j \widehat y_k \inner{\w}{\widehat \x_k} + b_j \right] + \max_{j \leq J} \eps \| \w \|_* |a_j|  \\
	=& \frac{1}{N} \sum_{i=1}^N L( \widehat y_i \inner{\w}{\widehat \x_i} ) + \max_{j\leq J} \eps \| \w \|_* |a_j|,
	\end{align*}
	where the penultimate equality holds because $\widehat y_k \inner{\w}{\widehat \x_k}$ resides on the steepest linear piece of the loss function $L$ by virtue of Assumption~\ref{Asmp:sep:class}. The claim then follows from Theorem~\ref{Thrm:Classification}(ii) because $\lip=\max_{j\leq J} |a_j|$. Note that generic convex Lipschitz continuous loss functions can be uniformly approximated as closely as desired with convex piecewise linear functions. Thus, the above arguments extend directly to generic convex Lipschitz continuous loss functions. Details are omitted for brevity.
\end{proof}

\begin{proof} [Proof of Theorem~\ref{Thrm:relation}]
	By the definition of the feature map $\Phi$ corresponding to the kernel $k$, we have
	\begin{equation}
	\label{eq:calmness}
	\begin{aligned}
	\|\Phi(\x_1)-\Phi(\x_2)\|_\HH& = \sqrt{\inner{\Phi(\x_1)}{\Phi(\x_1)}_\HH-2\inner{\Phi(\x_1)}{\Phi(\x_2)}_\HH+\inner{\Phi(\x_2)}{\Phi(\x_2)}_\HH}\\
	&= \sqrt{k(\x_1, \x_1) - 2 k(\x_1, \x_2) + k(\x_2, \x_2)}~ \leq~ g(\| \x_1 - \x_2 \|_2)
	\end{aligned}
	\end{equation}
	for all $\x_1,\x_2\in\XX$, where the inequality follows from Assumption~\ref{ass:calmness}. As for assertion~(i), we may use similar argument as in the proof of Lemma~\ref{Lmm:main} to reformulate the worst-case expectation in~\eqref{distrob:wass:hilbert} as
	\begin{align}
	\Sup_{\QQ \in \BB_{\eps } (\Pem)} \EE ^ \QQ \left[ \ell(h(\x),y) \right]
	& = \optimize{
		\Sup_ {\QQ ^ i} & \displaystyle \frac{1}{N} \SumN \int_{\XX\times\YY}
		\ell(h(\x), y) \QQ^i(\diff \x,\diff y) \\
		\text{s.t. } & \displaystyle \frac{1}{N} \SumN \int_{\XX\times \YY} d \big( (\x, y), (\widehat \x_i, \widehat y_i) \big) \QQ^i (\diff \x,\diff y) \leq \eps \\
		& \int_{\XX\times\YY} \QQ ^ i (\diff \x,\diff y) = 1 \qquad i \in [N]} \nonumber\\
	& = \optimize{
		\Sup_ {\QQ ^ i} & \displaystyle \frac{1}{N} \SumN \int_{\XX\times\YY}
		\ell(h(\x), y) \QQ^i(\diff \x,\diff y) \\
		\text{s.t. } & \displaystyle g \left( \frac{1}{N} \SumN \int_{\XX\times\YY} \sqrt{2} d \big( (\x, y), (\widehat \x_i, \widehat y_i) \big) \QQ^i (\diff \x,\diff y) \right) \leq g(\sqrt{2} \eps) \\
		& \int_{\XX\times\YY} \QQ ^ i (\diff \x,\diff y) = 1 \qquad i \in [N]} \nonumber \\
	& \leq \optimize{
		\Sup_ {\QQ ^ i} & \displaystyle \frac{1}{N} \SumN \int_{\XX\times\YY}
		\ell(h(\x), y) \QQ^i(\diff \x,\diff y) \\
		\text{s.t. } & \displaystyle \frac{1}{N} \SumN \int_{\XX\times \YY} g \left( \sqrt{2} d \big( (\x, y), (\widehat \x_i, \widehat y_i) \big) \right) \QQ^i (\diff \x,\diff y) \leq g(\sqrt{2} \eps) \\
		& \int_{\XX\times\YY} \QQ ^ i (\diff \x,\diff y) = 1 \qquad i \in [N],} 
	\label{eq:kernel-wce}
	\end{align}
	where the second equality holds because $g$ is strictly monotonically increasing, and the inequality follows from Jensen inequality, which applies because $g$ is concave. By the definition of the transportation metric on $\HH\times\YY$ for regression problems, we then find
	\begin{equation}
	\label{eq:g-estimate}
	\begin{aligned}
	g \Big( \sqrt{2} d \big( (\x, y), (\widehat \x_i, \widehat y_i) \big) \Big)
	& = g\Big( \sqrt{ 2 \| \x - \widehat \x_i \|_2^2 + 2 (y - \widehat y_i)^2 } \Big) \\
	& \geq g( \| \x - \widehat \x_i \|_2 + |y - \widehat y_i| ) \\
	& \geq g( \| \x - \widehat \x_i \|_2) + |y - \widehat y_i| \\
	& \geq \| \Phi(\x) - \Phi(\widehat \x_i) \|_\HH + |y - \widehat y_i| \\
	& \geq \sqrt{ \| \Phi(\x) - \Phi(\widehat \x_i) \|^2_\HH + (y - \widehat y_i)^2 } 
	= d_\HH \big( (\Phi(\x), y), (\Phi(\widehat \x_i), \widehat y_i) \big),
	\end{aligned}
	\end{equation}
	where the first inequality holds because $2a^2+2b^2 \geq (a+b)^2$ for all $a,b\geq 0$ and because $g$ is strictly monotonically increasing, the second inequality exploits the assumption that $g'(z)\geq 1$ for all $z\geq 0$, the third inequality follows form \eqref{eq:calmness}, and the last equality holds because $a^2+b^2 \leq (a+b)^2$ for all $a,b\geq 0$. Substituting the above estimate into~\eqref{eq:kernel-wce} and using the reproducing property $h(\x)=\inner{h}{\Phi(\x)}_\HH$ yields
	\begin{align*}
	\Sup_{\QQ \in \BB_{\eps } (\Pem)} \EE ^ \QQ \left[ \ell(h(\x),y) \right]
	& \leq \optimize{
		\Sup_ {\QQ ^ i} & \displaystyle \frac{1}{N} \SumN \int_{\XX\times\YY}
		\ell(\inner{h}{\Phi(\x)}_\HH,y) \QQ^i(\diff \x,\diff y) \\
		\text{s.t. } & \displaystyle \frac{1}{N} \SumN \int_{\XX\times\YY} d_\HH \Big( \big(\Phi(\x), y), \big(\Phi(\widehat \x_i), \widehat y_i) \Big) \QQ^i (\diff \x,\diff y) \leq g(\sqrt{2} \eps) \\
		& \int_{\XX\times\YY} \QQ ^ i (\diff \x,\diff y) = 1 \qquad i \in [N]} \\
	& \leq \optimize{
		\Sup_ {\QQ ^ i} & \displaystyle \frac{1}{N} \SumN \int_{\HH \times \YY}
		\ell(\x_\HH, y) \QQ^i(\diff \x_\HH, \diff y) \\
		\text{s.t. } & \displaystyle \frac{1}{N} \SumN \int_{\HH \times \YY} d_\HH \Big( \big(\x_\HH, y), \big(\Phi(\widehat \x_i), \widehat y_i) \Big) \QQ^i (\diff \x_\HH, \diff y) \leq g(\sqrt{2} \eps) \\
		& \int_{\HH \times \YY} \QQ ^ i (\diff \x_\HH, \diff y) = 1 \qquad i \in [N]} \\
	& = \Sup_{\QQ \in \BB_{g(\sqrt{2} \eps)} (\Pem^{\HH})} \EE ^ \QQ \left[ \ell(\x_\HH,y) \right],
	\end{align*}
	where the second inequality follows from relaxing the implicit condition that the random variable $\x_\HH$ must be supported on $\{\Phi(\x):\x\in\XX\}\subseteq\HH$. This completes the proof of assertion~(i) for regression problems. 
	
	The proof of assertion~(ii) parallels that of assertion~(i) with obvious modifications. Due to the different transportation metric for classification problems, however, the estimate~\eqref{eq:g-estimate} changes to
	\begin{align*}
	g \left( d \big( (\x, y), (\widehat \x_i, \widehat y_i) \big) \right)
	& = g \left( \| \x - \widehat \x_i \|_2 + \kappa |y - \widehat y_i| \right) \\
	& \geq g \left( \| \x - \widehat \x_i \|_2 \right) + \kappa |y - \widehat y_i|  \\
	& \geq \| \Phi(\x) - \Phi(\widehat \x_i) \|_\HH + \kappa |y - \widehat y_i| 
	= d_\HH \big( (\Phi(\x), y), (\Phi(\widehat \x_i), \widehat y_i) \big),
	\end{align*}
	where the first inequality exploits the assumption that $g'(z)\geq 1$ for all $z\geq 0$, and the second inequality follows form \eqref{eq:calmness}. Further details are omitted for brevity.
\end{proof}

\begin{proof} [Proof of Theorem~\ref{Thrm:rep}]
	The proof follows immediately from \citep[Theorem~4.2]{scholkopf2001learning}, which applies to loss functions representable as a sum of an empirical loss depending on $(\widehat \x_i,\widehat y_i, h(\widehat \x_i))$, $i\leq N$, and a regularization term that is strictly monotonically increasing in $\| h \|_{\HH}$. However, the additive separability is not needed for the proof. We remark that the optimal solution of~\eqref{rep_th} is unique if $f$ is striclty increasing in $\|h\|_\HH$. If $f$ is only non-decreasing in $\|h\|_\HH$, on the other hand, uniqueness may be lost. Details are omitted for brevity.
\end{proof}

\begin{proof} [Proof of Theorem~\ref{Thrm:Ker_Reg}]
	Using similar arguments as in the proof of Theorem~\ref{Thrm:Regression}(ii) and observing that any Hilbert norm is self-dual, one can show that
	$$ \Inf_ {h \in \HH} \Sup_{\QQ \in \BB_{\eps} (\Pem^{\HH})} \EE ^ \QQ \left[  L (\inner{h}{\x_\HH}_\HH - y) \right] = \min_{h \in \HH} \frac{1}{N} \SumN L (h(\widehat \x_i) - \widehat y_i) + \eps \lip \sqrt{\| h \|_{\HH}^2 + 1}.$$
	By the representer theorem, which applies because the objective function of the above optimization problem is non-decreasing in $\|h\|_\HH$, we may restrict the feasible set from $\HH$ to the subset of all linearly parametrized hypotheses of the form $h(\x) = \sum_{j=1}^N \beta_j k(\x,\widehat \x_j)$ for some $\bm \beta\in\RR^N$ without sacrificing optimality. The claim then follows by observing that $h(\widehat \x_i)= \sum_{j=1}^N \KK_{ij}\beta_j$ and $\|h\|_2^2=\inner{\bm \beta}{\KK\bm \beta}$.
\end{proof}

\begin{proof} [Proof of Theorem~\ref{Thrm:Ker_Class}]
	Using similar arguments as in the proof of Theorem~\ref{Thrm:Classification}(ii) and observing that any Hilbert norm is self-dual, one can show that
	\begin{align*}
	\Inf_ {h\in \HH} \Sup_{\QQ \in \BB_{\eps} (\Pem^{\HH})} \EE ^ \QQ \left[  L (y \inner{h}{\x_\HH}_\HH) \right] &= \optimize{\Min_{h,\lambda,s_i} & \displaystyle \Obj  & \\
		\mathrm{s.t.} & L (\widehat y_i h(\widehat \x_i)) \leq s_i & i \in [N] \\
		& L (-\widehat y_i h(\widehat \x_i)) - \kappa \lambda \leq s_i & i \in [N] \\
		&  \lip \| h \|_{\HH} \leq \lambda,}
	\end{align*}
	see \citep[Theorem~1]{gao2016distributionally} for a full proof. By the representer theorem, which applies because the loss function
	\[
	f( (\widehat \x_1,\widehat y_1, h(\widehat \x_1)), \ldots, (\widehat \x_N,\widehat y_N, h(\widehat \x_N)), \| h \|_{\HH})= \optimize{\Min_{\lambda} & \lambda \eps + \frac{1}{N} \SumN \max \{ L (\widehat y_i h(\widehat \x_i)), L (-\widehat y_i h(\widehat \x_i)) - \kappa \lambda \} \\
		\mathrm{s.t.} & \lambda \geq \lip \| h \|_\HH} 
	\]
	is non-decreasing in $\|h\|_\HH$, we may restrict attention to all linearly parametrized hypotheses of the form $h(\x) = \sum_{j=1}^N \beta_j k(\x,\widehat \x_j)$ for some $\bm \beta\in\RR^N$ without sacrificing optimality. Thus, the claim follows.
\end{proof}
\change{
\begin{proof}[Proof of Theorem~\ref{theorem:lips}]
	The worst-case expectation for regression problems satisfies 
	\begin{align*}
	&~\sup_{\QQ \in \Wball} \EE^\QQ \left[ \ell(h(\bm x; \bm W_{[M]}), y) \right] \\
	&= \inf_{\lambda \geq 0} ~ \lambda \rho + \frac{1}{N} \sum_{i=1}^N \sup_{\substack{\x \in \mathbb{X} \\ y \in \mathbb{Y}}} ~ L(h(\bm x; \bm W_{[M]}) - y) - \lambda ( \| \x - \widehat \x_i \| + \kappa | y - \widehat y_i | ) \\
	&\leq \inf_{\lambda \geq 0} ~ \lambda \rho + \frac{1}{N} \sum_{i=1}^N \sup_{\substack{\x \in \mathbb{X} \\ y \in \mathbb{Y}}} ~ L(h(\widehat \x_i; \bm W_{[M]}) - \widehat y_i) + \lip ( | h(\x; \bm W_{[M]}) - y - h(\widehat \x_i; \bm W_{[M]}) + \widehat y_i | ) \\
	& \hspace{11em} - \lambda ( \| \x - \widehat \x_i \| + \kappa | y - \widehat y_i | )   \\
	&\leq \inf_{\lambda \geq 0} ~ \lambda \rho + \frac{1}{N} \sum_{i=1}^N \sup_{\substack{\x \in \mathbb{X} \\ y \in \mathbb{Y}}} ~ L(h(\widehat \x_i; \bm W_{[M]}) - \widehat y_i) + \lip \, {\rm lip} (h) \| \x - \widehat \x_i \| + \lip | y - \widehat y_i | \\
	& \hspace{11em} - \lambda ( \| \x - \widehat \x_i \| + \kappa | y - \widehat y_i | ),
	\end{align*}
	where the equality holds by Lemma~\ref{Lmm:main}, and the first and the second inequalities follow from the Lipschitz continuity of the loss function $L$ and the hypothesis $h$. Note that composition of functions satisfies the inequality $\mathrm{lip}(f \circ g) \leq \mathrm{lip}(f) \, \mathrm{lip}(g)$; see for example~\citep[Exercise~9.8]{rockafellar2009variational}. Thus, the Lipschitz modulus of the hypothesis $h$ admits the upper bound
	\begin{align}
	\label{eq:upper}
	{\rm lip} (h) \leq \prod_{m=1}^m {\rm lip} (\sigma_m(\bm W_m \bm z_m)) \leq \prod_{m=1}^m {\rm lip} (\sigma_m) \, \| \bm W_m \|. 
	\end{align}
	Replacing $ {\rm lip} (h) $ by its upper bound in~\eqref{eq:upper} and setting 
	$$\lambda = \lip \max \left\{  \prod_{k=1}^M {\rm lip}\big(\phi(\cdot;\bm W_m)\big), \frac{1}{\kappa} \right\},$$
	will complete the proof for regression problems.
	In a similar way, the worst-case expectation for classification problems can be handled as follows: 
	\begin{align*}
	&~\sup_{\QQ \in \Wball} \!\! \EE^\QQ \left[ \ell(h(\bm x; \bm W_{[M]}), y) \right] \\
	&= \inf_{\lambda \geq 0} ~ \lambda \rho + \frac{1}{N} \sum_{i=1}^N \sup_{\substack{\x \in \mathbb{X} \\ y \in \mathbb{Y}}} ~ L(y h(\bm x; \bm W_{[M]})) - \lambda ( \| \x - \widehat \x_i \| + \kappa \mathds{1}_{ \{ y \neq \widehat y_i \} } ) \\
	&\leq \inf_{\lambda \geq 0} ~ \lambda \rho + \frac{1}{N} \sum_{i=1}^N \sup_{\substack{\x \in \mathbb{X} \\ y \in \mathbb{Y}}} ~ L(\widehat y_i h(\widehat \x_i; \bm W_{[M]})) - \lambda ( \| \x - \widehat \x_i \| + \kappa \mathds{1}_{ \{ y \neq \widehat y_i \} } ) + \lip ( | yh(\x) - \widehat y_i h(\widehat \x_i) | )  \\
	&\leq \inf_{\lambda \geq 0} ~ \lambda \rho + \frac{1}{N} \sum_{i=1}^N \sup_{\substack{\x \in \mathbb{X} \\ y \in \mathbb{Y}}} ~ L(\widehat y_i h(\widehat \x_i; \bm W_{[M]})) - \lambda ( \| \x - \widehat \x_i \| + \kappa \mathds{1}_{ \{ y \neq \widehat y_i \} } ) + \lip \, {\rm lip}(h) \, \| \x - \widehat \x_i \| \mathds{1}_{ \{ y = \widehat y_i \} } \\
	&\hspace{11em} + \lip \, | h(\x) + h(\widehat x_i) | \, \mathds{1}_{ \{ y \neq \widehat y_i \} } \\
	&\leq \inf_{\lambda \geq 0} ~ \lambda \rho + \frac{1}{N} \sum_{i=1}^N \sup_{\substack{\x \in \mathbb{X} \\ y \in \mathbb{Y}}} ~ L(\widehat y_i h(\widehat \x_i; \bm W_{[M]})) - \lambda ( \| \x - \widehat \x_i \| + \kappa \mathds{1}_{ \{ y \neq \widehat y_i \} } ) \\
	& \hspace{11em} + \lip \, \max \{ \frac{2 \sup_{\x \in \mathbb X, h \in \mathbb{H}} |h(\x)|}{\kappa}, {\rm lip}(h) \} \, ( \| \x - \widehat \x_i \| + \kappa \mathds{1}_{ \{ y \neq \widehat y_i \} } ).
	\end{align*}
	Using the upper bound in~\eqref{eq:upper} and setting 
	$$\lambda = \lip \max \left\{  \prod_{k=1}^M {\rm lip}\big(\phi(\cdot;\bm W_m)\big), \frac{\max\{1 , 2 \sup_{\x \in \mathbb{X}, h \in \mathbb{H}} |h(\x)|\}}{\kappa} \right\}$$
	will completes the proof for classification problems.
\end{proof}

\begin{proof} [Proof of Corollary~\ref{corollary:safe}]
	Let $ \bar \sigma = \max_{m \in [M]} \mathrm{lip}(\sigma_m)$.
	By Theorem~\ref{theorem:lips} and setting $\kappa = \infty$, the worst-case expectation for both classification and regression problems can be upper bounded by 
	\begin{align*}
	\sup_{\QQ \in \Wball} \EE^\QQ \left[ \ell(h(\bm x; \bm W_{1,\;\hdots\;, M}), y) \right] &\leq \frac{1}{N} \sum_{i=1}^N \sup_{\substack{\x \in \mathbb{X} \\ y \in \mathbb{Y}}} ~ \ell(h(\widehat \x_i; \bm W_{1,\;\hdots\;, M}), \widehat y_i) + \rho \, \bar \sigma \, \mathrm{lip}(\ell) \prod_{k=1}^M \| \bm W_k \| \\
	&\leq \frac{1}{N} \sum_{i=1}^N \sup_{\substack{\x \in \mathbb{X} \\ y \in \mathbb{Y}}} ~ \ell(h(\widehat \x_i; \bm W_{1,\;\hdots\;, M}), \widehat y_i) + \rho \, \bar \sigma \, \mathrm{lip}(\ell) \left( \sum_{k=1}^M \frac{\| \bm W_k \|}{M} \right)^M,
	\end{align*}
	where the last inequality follows from the arithmetic-geometric means inequality.
	By \citep[Theorem~1]{everett1963generalized}, if $\bm W_{[M]}\opt$ is a minimizer of the optimization problem
	$$\min_{\bm W_{[M]}}~\frac{1}{N} \sum_{i=1}^N \sup_{\substack{\x \in \mathbb{X} \\ y \in \mathbb{Y}}} ~ \ell(h(\widehat \x_i; \bm W_{1,\;\hdots\;, M}), \widehat y_i) + \rho \, \bar \sigma \, \mathrm{lip}(\ell) \left( \sum_{k=1}^M \frac{\| \bm W_k \|}{M} \right)^M,$$
	then the same $\bm W_{[M]}\opt$ also minimizes the constrained optimization problem
	\begin{align*}
	\optimize{
		\displaystyle \inf_{\bm W_{[M]}} & \displaystyle \frac{1}{N} \sum_{i=1}^N \ell(h(\widehat \x_i; \bm W_{[M]}), \widehat y_i) \\
		\mathrm{s.t.} & \displaystyle \left( \sum_{k=1}^M \frac{\| \bm W_k \|}{M} \right)^M \leq \theta^M,
	}
	\end{align*}
	for $\theta = \sum_{i=1}^M \| \bm W_k \opt \| / M.$ Notice that the constraint in the above optimization problem can be simplified to $\sum_{i=1}^M  \| \bm W_k \opt \| \leq M \theta$. Hence, there exists a Lagrange multiplier $\overline \rho$ for the simplified constraint $\sum_{i=1}^M  \| \bm W_k \opt \| \leq M \theta$ such that $\bm W_{[M]}\opt$ is an optimizer of the penalized problem
	\begin{align*}
	\inf_{\bm W_{[M]}} \frac{1}{N} \sum_{i=1}^N \ell(h(\widehat \x_i; \bm W_{[M]}), \widehat y_i) + \overline \rho \sum_{k=1}^M \| \bm W_k \|.
	\end{align*}
	This completes the proof.
\end{proof}
}
\subsection{Proofs of Section~\ref{sec:Probabilistic}}
The proof of Theorem~\ref{Thrm:improved} relies on the following preparatory lemma, which basically asserts that the sample average of a linearly growing function of $\xxi$ has sub-Gaussian tails.

\begin{Lmm2}[Sub-Gaussian tails]
	\label{Lmm:concentration}
	If Assumption~\ref{Asmp:light} holds, then there exist constants $c_3\ge1$ and $c_4>0$ that depend only on the light tail constants $a$ and $A$ of $\PP$ such that  
	\begin{equation*}
	\PP^{N}\Big\{ \big| \EE^\PP[f(\xxi)] - \EE^{\Pem}[f(\xxi)] \big| \ge \delta \Big\} \le c_3\exp(-c_4N\delta^2) 
	\end{equation*}
	for any $N\in\mathbb N$, $\delta\in [0,1]$ and function $f:\Xi\rightarrow\mathbb R$ with $|f(\xxi)-f(\xxi')| \le d(\xxi, \xxi')$ for all $\xxi \in\Xi$ and some reference point $\xxi' \in \Xi$.
\end{Lmm2}

\begin{proof} 
	Assume that $f:\Xi\rightarrow\mathbb R$ is a linear growth function with $|f(\xxi)-f(\xxi')| \le d(\xxi, \xxi')$ for all $\xxi \in\Xi$ and some reference point $\xxi' \in \Xi$. Set $\xi_f=f(\xxi)$ and $\xi'_f=f(\xxi')$. Thus, the distribution of the scalar random variable $\xi_f$ is given by the pushforward measure $f_*(\PP)$ of $\PP$. By construction, we have
	\begin{align*}
	\EE^{f_*(\PP)}\left[\exp\big(|\xi_f-\xi_f'|^a\big) \right] = \EE^{\PP}\left[\exp\big(|f(\xxi) - f(\xxi')|^a\big) \right] \le \EE^{\PP}\left[\exp\big(d(\xxi,\xxi')^a\big) \right] \le A ,
	\end{align*}
	where the first inequality follows form the growth condition of $f$, while the second inequality holds because~$\PP$ satisfies Assumption~\ref{Asmp:light}. Hence, the distribution $f_*(\PP)$ satisfies Assumption~\ref{Asmp:light} for $n=0$ when distances on~$\mathbb R$ are measured by the absolute value, and it inherits the light-tail constants $a$ and $A$ from~$\PP$. By using Theorem~\ref{thm:concentration} for $n=0$, we may thus conclude that there exist constants $c_3, c_4>0$ with
	\begin{align*}
	\PP^N \Big\{ W(f_*(\PP), f_*(\Pem)) \ge \delta \Big \} \le c_3 \exp\big({-c_4 N\delta^2}\big)\quad \forall \delta \in[0, 1],
	\end{align*}
	where $f_*(\Pem)$ represents the empirical distribution of $\xi_f$, which coincides with the pushforward measure of~$\Pem$ under~$f$. By slight abuse of notation, $W$ stands here for the Wasserstein distance between {\em univariate} distributions, where the absolute value is used as the ground metric. Note that the above univariate measure concentration result holds for any linear growth function $f$ with asymptotic growth rate $\le 1$. We emphasize that $c_3 \ge 1$ because otherwise the above estimate would fail to hold for $\delta=0$. 
	
	By construction of the Wasserstein distance in Definition~\ref{Def:Wass}, we have $W(f_*(\PP), f_*(\Pem))<\delta$ if and only if the scalar random variables $\xi_f$ and $\xi_f'$ admit a joint distribution $\Pi$ with $\EE^\Pi [|\xi_f - \xi_f'|] <\delta$ under which $\xi_f$ and $\xi_f'$ have marginals $f_*(\PP)$ and $f_*(\Pem)$, respectively. The inequality $W(f_*(\PP), f_*(\Pem))<\delta$ thus implies	
	\begin{align*}
	\left| \EE^{\PP}[f(\xxi)]  - \EE^{\Pem}[f(\xxi')] \right| = \left| \EE^{f_*(\PP)}[\xi_f] - \EE^{f_*(\Pem)}[\xi'_f] \right| \le \left| \EE^\Pi[\xi_f - \xi_f'] \right|
	= \EE^\Pi \left[ \left|  \xi_f - \xi_f'\right| \right] <\delta.
	\end{align*}
	By contraposition, we then obtain the implication
	\[
	\left| \EE^{\PP}[f(\xxi)]  - \EE^{\Pem}[f(\xxi')] \right| \ge \delta \qquad \implies\qquad W(f_*(\PP), f_*(\Pem))\ge \delta,
	\]
	which leads to the desired inequality
	\[
	\PP^{N}\Big\{ \big| \EE^\PP[f(\xxi)] - \EE^{\Pem}[f(\xxi)] \big| \ge \delta \Big\}\le  \PP^{N}\Big\{ W(f_*(\PP), f_*(\Pem))\ge \delta \Big\} \le c_3 \exp\big({-c_4 N\delta^2}\big).
	\]
	This inequality holds for all $\delta\in[0,1]$ and $N\in\mathbb N$, irrespective of the linear growth function~$f$. 
\end{proof}

\begin{Rmk2}[Hoeffding's inequality]
	\label{rem:subgaussian}
	If it is known that $\PP\{\underline f\le f(\xxi)\le \overline f\}=1$, then Lemma~\ref{Lmm:concentration} reduces to Hoeffding's inequality \citep[Theorem~2.8]{boucheron2013concentration}, in which case we may set $c_3=2$ and $c_4=2/(\overline f-\underline f)^2$. 
\end{Rmk2}

\begin{proof} [Proof of Theorem~\ref{Thrm:improved}]
	To avoid cumbersome case distinctions, we prove the theorem only in the case when~\eqref{distrob:wass} is a classification problem. Thus, we assume that $\Xi=\RR^n\times\{-1,1\}$ and that the transportation cost is of the form~\eqref{metric}, where $\|\cdot\|$ denotes a norm on the input space $\RR^n$. The proof for regression problems is similar and only requires minor modifications. It will be omitted for brevity. 
	
	From Theorem~\ref{Thrm:Classification}(ii) we know that
	\begin{align*}
	\sup_{\QQ \in \mathbb B_{\eps}(\Pem)}  \EE^\QQ \left[ {\ell(\inner{\w}{\x}, y)} \right] & = \EE^{\Pem} \left[ {\ell(\inner{\w}{\x}, y)} \right] + \eps \lip \Omega(\w) \quad \forall \w\in\WW,
	\end{align*}
	where $\Omega(\w)=\| \w \|_*$ can be viewed as a regularization function. For every $\eps\ge\eps'_N(\eta)$ we thus have
	\begin{align}
	\notag
	& \Prob^N \left\{ \mathds{E}^\mathds{P} \left[ \ell(\inner{\w}{\x}, y) \right] \leq \sup_{\QQ \in \mathbb B_{\eps}(\Pem)} \mathds{E}^\mathds{\QQ} \left[ \ell(\inner{\w}{\x}, y) \right]~\forall \w \in \WW \right\} \\  \notag
	\ge  ~& \Prob^N \left\{ 0\le \min_{\w \in \WW}  \EE^{\Pem} \left[ {\ell(\inner{\w}{\x}, y)} \right] + \eps'_N(\eta) \lip \Omega(\w)  - \mathds{E}^\mathds{P} \left[ \ell(\inner{\w}{\x}, y) \right] \right\} \\ 	
	=  ~& 1 - \Prob^N \left\{  \min_{{\w} \in \WW} \EE^{\Pem} \left[ {\ell(\inner{\w}{\x}, y)} \right] + \eps'_N(\eta) \lip \Omega(\w)  - \EE^\PP \left[ {\ell(\inner{\w}{\x}, y)} \right] < 0 \right\}.
	\label{eq:probability}
	\end{align}
	Observe that $\ell(\inner{\w}{\x}, y)$ is Lipschitz continuous in $\w$ for every fixed $\x$ and $y$ because the underlying univariate loss function $L$ is Lipschitz continuous by assumption. Specifically, we have
	\begin{align*}
	|\ell(\inner{\w}{\x}, y) - \ell(\inner{\w'}{\x}, y)| &\leq \lip |\inner{\w - \w'}{\x}| \leq \lip \| \w - \w' \|_\infty \| \x \|_1 \qquad \forall \w, \w' \in \WW.
	\end{align*}
	For any $\Delta>0$ there exists a finite set $\WW_{\Delta}\subseteq \WW$ with $\Delta = \sup_{\w \in \WW} \inf_{\w' \in \WW_{\Delta}} \|\w - \w' \|_\infty$ and whose cardinality satisfies $|\WW_\Delta | < (\overline \Omega / \Delta - 1)^n < (\overline \Omega / \Delta)^n - 1$ where the second inequality holds because $\Delta \leq \overline \Omega$ by construction. In the following we set $\Delta= \overline \Omega/\sqrt{N}$. 
	
	As $\ell(\inner{\w}{\x}, y)$ is Lipschitz continuous in $\w$, for every $\w\in\WW$ there is $\w'\in\WW_\Delta$ with 
	\[
	\left| \ell(\inner{\w}{\x}, y) \right| \ge \left| \ell(\inner{\w'}{\x}, y) \right| - \lip \Delta \| \x \|_1.
	\]
	Applying this estimate twice and recalling the assumption that $\Omega(\w) \ge \underline \Omega$ for all $\w\in\WW$, we may thus conclude that the probability in~\eqref{eq:probability} is smaller or equal to
	\begin{align} \notag
	& \Prob^N\bigg\{ \min_{\w \in \WW} \;\; \EE^{\Pem} \left[ {\ell(\inner{\w}{\x}, y)} \right]  - \EE^\PP \left[ {\ell(\inner{\w}{\x}, y)} \right]  +  \eps'_N(\eta) \lip \underline \Omega < 0 \bigg\} \\ \notag
	\leq \; & \Prob^N\bigg\{ \min_{\w \in \WW_\Delta} \EE^{\Pem} \left[ {\ell(\inner{\w}{\x}, y)} \right]  - \EE^\PP \left[ {\ell(\inner{\w}{\x}, y)} \right] 
	- \lip \Delta \left( \EE^{\Pem}\left[ \| \x \|_1 \right] + \EE^{\PP} \left[ \| \x \|_1 \right] \right)  + \eps'_N(\eta) \lip \underline \Omega < 0 \bigg\} \\ \notag
	\le \; & \Prob^N\bigg\{ \min_{\w \in \WW_\Delta} \EE^{\Pem} \left[ {\ell(\inner{\w}{\x}, y)} \right]  - \EE^\PP \left[ {\ell(\inner{\w}{\x}, y)} \right] 
	- \lip \Delta \left( \EE^{\Pem}\left[ \| \x \|_1 \right] - \EE^{\PP} \left[ \| \x \|_1 \right] \right) \\
	& \hspace{4.5em}  < 2\lip \Delta M_n nA 
	-  \eps'_N(\eta) \lip \underline \Omega \bigg\} .
	\label{eq:probability2}
	\end{align}
	The second inequality in the above expression follows from the estimate
	\[
	M_n = \max_{i \le n} \|\bm e^n_i\|_* = \max_{i \le n} \sup_{\| \x \| \le 1} \inner{\bm e^n_i}{\x} = \max_{i \le n} \sup_{\| \x \| \le 1} |x_i| = \sup_{\| \x \| \le 1} \| \x \|_\infty 
	\geq \sup_{\x \in \RR^n} {\| \x \|_1 \over n \| \x \|},
	\]
	which can be paraphrased as $\|\x\|_1\leq M_n n\|\x\|$ for every $\x\in\RR^n$ and thus implies
	\[
	\EE^\PP[\|\x\|_1] \leq M_n n\EE^\PP[\|\x\|] \leq M_n n\EE^\PP[\exp(\|\x\|^a)] \le M_n n\EE^\PP[\exp \left(d(\xxi,\xxi'))^a \right)] \le M_n n A.
	\]
	Next, we introduce an auxiliary parameter
	\begin{align*}
	\delta = {\eps'_N(\eta) \underline \Omega - M_nA 
		\over \Delta + \overline \Omega} = {2 \sqrt{n\log (\sqrt{N})/c_4 + \log (c_3/\eta)/c_4 } \over 1 + \sqrt N},
	\end{align*}
	where the second equality follows from the definition of $\eps'_N(\eta)$ in the theorem statement and the convention that $\Delta= \overline \Omega/\sqrt{N}$. One can prove that $\delta\in[0,1]$. Indeed, the nonnegativity of $\delta$ is immediate because $c_3\ge 1$ and $c_4>0$. Moreover, we find
	\[
	\delta \leq {2 \sqrt{n\log (\sqrt{N})/c_4 } \over 1 + \sqrt N} + {2 \sqrt{ \log (c_3/\eta)/c_4}  \over 1 + \sqrt N} \leq 1,
	\]
	where the first inequality follows from the observation that $ \sqrt{x_1+x_2} \leq \sqrt{x_1} + \sqrt{x_2}$ for all $x_1,x_2\ge 0$, while the second inequality holds because $\log(\sqrt{N})\le \sqrt{N}$ for all $N\in\mathbb N$ and $N \ge \max \{ (16n/c_4)^2, 16 \log (c_3/\eta)/c_4 \}$, which implies that both fractions in the middle of the above expression are smaller ore qual to $\frac{1}{2}$.

	Multiplying the definition of $\delta$ with $-\lip(\Delta+\overline \Omega)$ yields the identity 
	\[
	-\lip \Delta \delta - \lip \overline \Omega \delta=  2\lip \Delta M_nnA 
	-  \lip \eps_N(\eta) \underline \Omega,
	\] 
	and thus the probability~\eqref{eq:probability2} can be bounded above by
	\begin{subequations}
		\begin{align} \notag
		& \Prob^N\bigg\{ \min_{\w \in \WW_\Delta} \EE^{\Pem} \left[ {\ell(\inner{\w}{\x}, y)} \right]  - \EE^\PP \left[ {\ell(\inner{\w}{\x}, y)} \right] 
		\le -\lip\overline \Omega \delta \quad \text{or} \\ \notag
		& \hspace{4.5em} - \lip \Delta \left( \EE^{\Pem}\left[ \| \x \| \right] - \EE^{\PP} \left[ \| \x \| \right] \right) \le  
		- \lip \Delta \delta  \bigg\}\\ \label{eq:probability3a}
		\le \; & \Prob^N\bigg\{ \min_{\w \in \WW_\Delta} \EE^{\Pem} \left[ {\ell(\inner{\w}{\x}, y)} \over {\lip \overline\Omega }\right]  - \EE^\PP \left[ {\ell(\inner{\w}{\x}, y)} 
		\over {\lip \overline\Omega }\right] 
		\le - \delta \bigg\} \\ \label{eq:probability3b}
		& \hspace{4.5em} + \Prob^N\bigg\{ \left( \EE^{\Pem}\left[ \| \x \| \right] - \EE^{\PP} \left[ \| \x \| \right] \right) \ge \delta  \bigg\},
		\end{align}
	\end{subequations}
	where the inequality follows from the subadditivity of probability measures.
	
	For any fixed $\w\in\WW$ one can show that the function $f(\xxi)=\ell(\inner{\w}{\x}, y)/ (\lip \overline\Omega)$ with $\xxi=(\x,y)$ satisfies the linear growth condition $|f(\xxi)-f(\xxi')|\le d(\xxi,\xxi')$ for all $\xxi\in\Xi$ if $\xxi'=(\bm 0,1)$. Details are omitted for brevity. By the subadditivity of probability measures, the probability~\eqref{eq:probability3a} is thus smaller or equal to
	\begin{align*}
	\sum_{\w\in\WW_\Delta}  \Prob^N \bigg\{ \Big| \EE^{\Pem} \Big[ {\ell(\inner{\w}{\x}, y) \over \lip\overline \Omega} \Big] - \EE^\PP \Big[ {\ell(\inner{\w}{\x}, y) 
		\over \lip\overline \Omega}\Big]\Big| > \delta \bigg\}  \le |\WW_{\Delta}| c_3 \exp(-c_4N\delta^2),
	\end{align*}
	where the inequality follows from Lemma~\ref{Lmm:concentration}, which applies because $\delta\in [0,1]$. Moreover, the function $f(\xxi)=\|\x\|$ with $\xxi=(\x,y)$ trivially satisfies the linear growth condition $|f(\xxi)-f(\xxi')|\le d(\xxi,\xxi')$ for all $\xxi\in\Xi$ if $\xxi'=(\bm 0,1)$. By Lemma~\ref{Lmm:concentration}, the probability~\eqref{eq:probability3b} is thus smaller or equal to 
	\begin{align*}
	\Prob^N\left\{ \left | \EE^\PP\left[ \| \x \| \right] - \EE^{\Pem}\left[ \| \x \| \right] \right | > \delta \right\} \leq c_3 \exp(-c_4N\delta^2).
	\end{align*}
	By combining the above estimates, we may conclude that the probability in~\eqref{eq:probability} does not exceed
	%
	\begin{align*}
	\left( |\WW_{\Delta}|+1 \right) c_3 \exp\big(-c_4 N\delta^2\big) & \le \left(\overline \Omega /\Delta\right)^{n} c_3 \exp \left(-c_4 N \left( {\eps'_N(\eta) \underline \Omega - 2 \Delta M_nnA
		\over \Delta + \overline \Omega} \right)^2 \right) \\
	&= N^{\frac{n}{2}} c_3 \exp \left(-c_4 N \left( {\eps'_N(\eta) \underline \Omega \sqrt{N} - 2 \overline \Omega M_nnA 
		\over \overline \Omega (\sqrt{N} + 1)} \right)^2 \right) \\
	&\leq N^{\frac{n}{2}} c_3 \exp \left(-c_4 N \left( {2 \overline \Omega {\sqrt{n\log (\sqrt{N})/ c_4 + \log (c_3/\eta)/ c_4}} \over 2 \overline \Omega \sqrt{N} } \right)^2 \right) \\
	&= N^{\frac{n}{2}} c_3 \exp \left(-  n\log (\sqrt{N}) - \log (c_3/\eta)  \right) = \eta,
	\end{align*}
	where the first inequality follows from the definition of $\delta$ and the assumption that $|\WW_{\Delta}| < (\overline \Omega/\Delta)^n -1$, the first equality holds because $\overline\Omega/\Delta=\sqrt{N}$, and the second inequality holds due to the definition of~$\eps'_N(\eta)$. In summary, we have shown that the probability in~\eqref{eq:probability} is at most $\eta$, and thus the claim follows.
\end{proof}

\subsection{Proofs of Section~\ref{subsec:Uncertainty}}
\begin{proof} [Proof of Theorem~\ref{Thrm:Error}]
	As for assertion (i), note that the absolute value function coincides with the $\epsilon$-insensitive loss for $\epsilon = 0$. Thus, \eqref{worst_case:err} follows immediately from Corollary~\ref{Crl:svr} by fixing $\w$ and by setting $\epsilon=0$ and $\Xi = \RR^{n+1}$. As for assertion (ii), similar arguments as in the proof of Lemma~\ref{Lmm:main} show that 
	\begin{align} \label{best:min}
	\error_{\min}(\w) = \Sup_{\lambda\geq 0}~  -\lambda \eps + \frac{1}{N} \SumN \Inf_{\x,y} ~ | y - \inner{\w}{\x} | + \lambda  \|(\x,y)-(\widehat \x_i, \widehat y_i)\|.
	\end{align}
	The subordinate minimization problem in the first constraint of \eqref{best:min} is equivalent to
	\begin{align*}
	\Inf_{\x, y} ~ |y - \inner{\w}{\x}| + \lambda \| (\x, y) - (\widehat \x_i, \widehat y_i) \| 
	& = \Inf_{\x, y} \Sup_{\| (\q_i,v_i) \|_* \leq\lambda} |y - \inner{\w}{\x}| + \inner{\q_i}{\x - \widehat \x_i} + v_i (y_i - \widehat y_i) \\
	& = \Sup_{\| (\q_i,v_i) \|_* \leq\lambda} \optimize{
		\Inf_{\x, y, z} & z + \inner{\q_i}{\x - \widehat \x_i} + v_i (y_i - \widehat y_i) \\
		\text{s.t.} & z \geq y - \inner{\w}{\x},~ z \geq \inner{\w}{\x} - y} \\
	& = \optimize{
		\Sup_{\q_i,v_i,r_i, t_i} & - \inner{\q_i}{\widehat \x_i} - \widehat y_i v_i \\
		\text{s.t.} & t_i + r_i = 1,~ t_i - r_i = v_i \\
		& (r_i - t_i) \w = \q_i ,~  \| (\q_i, v_i) \|_* \leq \lambda \\
		&r_i,t_i \geq 0} \\
	& = \optimize{
		\Sup_{v_i} & v_i (\inner{\w}{\widehat \x_i} - \widehat y_i) \\
		\text{s.t.} & -1 \leq v_i \leq 1 \\
		& v_i \| (\w, -1) \|_* \leq \lambda,~ -v_i  \| (\w, -1) \|_* \leq \lambda,}
	\end{align*}
	where the second equality holds due to Proposition~5.5.4 in \citep{bertsekas2009convex}, and the third equality holds due to strong linear programming duality. 
	By substituting the last optimization problem into~\eqref{best:min} and replacing $v_i$ with $-v_i$, we have
	\begin{align*}
	\error_{\min}(\w) &= \optimize{
		\Sup_{v_i, \lambda} & \displaystyle - \lambda \eps + \frac{1}{N} \SumN v_i (\widehat y_i - \inner{\w}{\widehat \x_i}) & \\
		\text{s.t.} & -1 \leq v_i \leq 1 & \hspace{-1cm} i \in [N] \\
		& v_i \| (\w, -1) \|_* \leq \lambda & \hspace{-1cm} i \in [N] \\
		& -v_i  \| (\w, -1) \|_* \leq \lambda & \hspace{-1cm} i \in [N]} \\
	&= \optimize{
		\Sup_{v_i, \lambda} & \displaystyle - \lambda \eps + \frac{1}{N} \SumN v_i |\widehat y_i - \inner{\w}{\widehat \x_i}| & \\
		\text{s.t.} & 0 \leq v_i \leq 1 & \hspace{-1cm} i \in [N] \\
		& v_i \| (\w, -1) \|_* \leq \lambda & \hspace{-1cm} i \in [N]} \\
	&= \optimize{
		\Sup_{v, \lambda} & \displaystyle - \lambda \eps + \frac{1}{N} \SumN v |\widehat y_i - \inner{\w}{\widehat \x_i}| & \\
		\text{s.t.} & 0 \leq v \leq 1 \\
		& v \| (\w, -1) \|_* \leq \lambda } \\
	&= \optimize{
		\Sup_{v} & \displaystyle v \left( \frac{1}{N} \SumN |\widehat y_i - \inner{\w}{\widehat \x_i}| - \eps \| (\w, -1) \|_* \right)  & \\
		\text{s.t.} & 0 \leq v \leq 1 } \\
	&= \max \left\{ \frac{1}{N} \SumN |\widehat y_i - \inner{\w}{\widehat \x_i} | - \eps \| (\w, -1) \|_* , 0 \right\}
	\end{align*}
\end{proof}

\begin{proof} [Proof of Theorem~\ref{Thrm:Risk}]		
	As for assertion~(i), similar arguments as in the proof of Lemma~\ref{Lmm:main} show that
	\begin{subequations}
		\begin{align} \label{th3_1}
		\risk_{\max}(\w) = 
		\optimize{\Inf_{\lambda,s_i} & \displaystyle \Obj & \\
			\text{s.t.} & \Sup_ {\x } \mathds{1}_{\lbrace \widehat y_i \, \inner{\w}{\x} \leq 0 \rbrace} -\lambda \|\widehat{\x}_i - \x \| \leq s_i & \forall i\leq N \\
			& \Sup_ {\x} \mathds{1}_{\lbrace - \widehat y_i \, \inner{\w}{\x} \leq 0 \rbrace} - \lambda \| \widehat{\x}_i - \x \| - \kappa \lambda \leq s_i & \forall i\leq N \\
			&\lambda\geq 0. & }
		\end{align}
		Next, observe that the indicator functions in~\eqref{th3_1} can be represented as pointwise maxima of extended real-valued concave functions of the form
		$ \mathds{1}_{\lbrace  \widehat y_i \, \inner{\w}{\x} \leq 0 \rbrace} = \max\{I_1(\x),0\}$ and $\mathds{1}_{\lbrace \widehat y_i \,\inner{\w}{\x} \geq 0 \rbrace} = \max\{I_2(\x),0\}$, respectively, where
		\begin{align*}
		I_1(\x) = 
		\begin{cases}
		1 & \widehat y_i \, \inner{\w}{\x} \leq 0, \\
		-\infty  & \text{otherwise},
		\end{cases} \quad \mathrm{and} \quad I_2(\x) =
		\begin{cases}
		1 &  \widehat y_i \, \inner{\w}{\x} \geq 0,\\
		-\infty  & \text{otherwise.}
		\end{cases}
		\end{align*}
		This allows us to reformulate \eqref{th3_1} as
		\begin{align*}
		\optimize{\Inf_{\lambda,s_i} & \displaystyle \Obj & \\
			\text{s.t.} & \Sup_ {\x \in \RR^n} ~ I_1(\x) -\lambda \|\widehat{\x}_i-\x\| \leq s_i & \forall i\leq N \\
			& \Sup_ {\x} ~ 0 - \lambda \|\widehat{\x}_i-\x\| \leq s_i & \forall i\leq N \\
			& \Sup_ {\x} ~ I_2(\x) -\lambda\|\widehat{\x}_i-\x\| - \kappa \lambda \leq s_i & i \in [N] \\
			& \Sup_ {\x} ~ 0 -\lambda\|\widehat{\x}_i-\x\| - \kappa \lambda \leq s_i & \forall i\leq N \\
			&\lambda\geq 0.}
		\end{align*}
		Using the definition of the dual norm and applying the duality theorem \citep[Proposition~5.5.4]{bertsekas2009convex}, we find
		\begin{align} \label{th3_2}
		\risk_{\max}(\w) = \optimize{\Inf_{\lambda,s_i,\p_i,\q_i} & \displaystyle \Obj & \\
			\text{s.t.} & \Sup_ {\x} ~ I_1(\x) + \inner{\p_i}{\x} - \inner{\p_i}{\widehat{\x}_i} \leq s_i & \forall i\leq N \\
			& \Sup_ {\x} ~ I_2(\x) + \inner{\q_i}{\x} - \inner{\q_i}{\widehat{\x}_i} - \kappa \lambda \leq s_i & \forall i\leq N \\
			& s_i \geq 0, \|\p_i\|_* \leq \lambda, \|\q_i\|_* \leq \lambda & i \in [N].}
		\end{align}
		Moreover, by strong linear programming duality we have
		\begin{align} \label{th3_3}
		\Sup_ {\x} ~ I_1(\x) + \inner{\p_i}{\x}  = 
		\Sup_ {\x} \left\{ 1 + \inner{\p_i}{\x}: \widehat y_i \, \inner{\w}{\x} \leq 0\right\} =
		\Inf_{r_i \geq 0} \left\{ 1: \widehat y_i r_i \w = \p_i\right\}
		\end{align} 
		and 
		\begin{align} \label{th3_4}
		\Sup_ {\x} ~ I_2(\x) + \inner{\q_i}{\x} = 
		\Sup_ {\x} \left\{ 1 + \inner{\q_i}{\x} : \widehat y_i \, \inner{\w}{\x} \geq 0\right\} =
		\Inf_{t_i \geq 0} \left\{ 1 : \widehat y_i t_i \w = \q_i\right\}.
		\end{align}
		Substituting \eqref{th3_3} and \eqref{th3_4} into \eqref{th3_2} yields~\eqref{worst_case:risk}. The expression \eqref{best_case:risk} for the best-case risk can be  proved in a similar fashion. Details are omitted for brevity.
	\end{subequations}
\end{proof}
	
	\bibliography{mybib}
	\bibliographystyle{myabbrvnat}
	
\end{document}